\documentclass{article}

\usepackage{arxiv}

\PassOptionsToPackage{numbers,compress}{natbib}

\usepackage[utf8]{inputenc} 
\usepackage[T1]{fontenc}    
\usepackage{hyperref}       
\usepackage{url}            
\usepackage{booktabs}       
\usepackage{amsfonts}       
\usepackage{nicefrac}       
\usepackage{microtype}      
\usepackage{xcolor}         

\usepackage{microtype}
\usepackage{graphicx}
\usepackage{wrapfig}
\usepackage{subcaption}
\graphicspath{{figures/}}

\usepackage{amsmath}
\usepackage{amssymb}
\usepackage{mathtools}
\usepackage{amsthm}
\usepackage{pifont}
\newcommand{\xmark}{\ding{55}}
\usepackage{booktabs}

\usepackage{amsfonts}
\usepackage{bm}
\usepackage{microtype}
\usepackage{enumitem}

\usepackage[capitalize,noabbrev]{cleveref}

\theoremstyle{plain}
\newtheorem{theorem}{Theorem}[section]
\newtheorem{proposition}[theorem]{Proposition}
\newtheorem{lemma}[theorem]{Lemma}

\theoremstyle{definition}
\newtheorem{definition}[theorem]{Definition}

\theoremstyle{remark}
\newtheorem{remark}[theorem]{Remark}

\newcommand{\MEMO}[1]{}

\newcommand{\bbR}{\mathbb{R}}

\newcommand{\bbT}{\mathbb{T}}

\newcommand{\bbH}{\mathbb{H}}
\newcommand{\bbS}{\mathbb{S}}

\newcommand{\bbP}{\mathbb{P}}

\newcommand{\inner}[2]{\langle #1, #2 \rangle}
\newcommand{\F}{\mathcal{F}}
\newcommand{\Finve}{\mathcal{F}^{-1}}
\newcommand{\Hd}{\mathbb{H}_d}
\newcommand{\Hm}{\mathbb{H}_m}
\newcommand{\op}{\mathrm{op}}

\newcommand{\norm}[1]{\left\lVert #1\right\rVert}
\newcommand{\abs}[1]{\left\lvert #1\right\rvert}

\title{Symplectic Neural Operators for Learning Infinite Dimensional Hamiltonian Systems}

\author{%
Yeang Makara \\
Graduate School of Science\\
Kobe University\\
1-1 Rokkodai-cho, Nada-ku, Kobe, Japan\\ 
\texttt{244s025s@stu.kobe-u.ac.jp} \\
\And 
Yusuke Tanaka\\
NTT Communication Science Laboratories\\
2-4, Hikaridai, Seika-cho, Soraku-gun, Kyoto, Japan\\
\texttt{ysk.tanaka@ntt.com}
\And Takashi Matsubara\\
Faculty of Information Science and Technology\\
Hokkaido University\\
Kita 14, Nishi 9, Kita-ku, Sapporo, Hokkaido, Japan\\
\texttt{matsubara@ist.hokudai.ac.jp}
\And
Takaharu Yaguchi\\
Institute of Mathematics for Industry\\
Kyushu University\\
RIKEN AIP\\
744 Motooka, Nishi-ku, Fukuoka, Japan \\
\texttt{yaguchi@imi.kyushu-u.ac.jp} \\
}

\begin{document}

\maketitle

\begin{abstract}
  The modeling and simulation of infinite-dimensional Hamiltonian systems are central problems in mathematical physics and engineering, however they pose significant computational and structural challenges for standard data-driven architectures. In this work, we introduce the \textbf{Symplectic Neural Operator}, a neural operator architecture designed to preserve the symplectic structure intrinsic to Hamiltonian PDEs. We provide a theoretical characterization of their symplecticity and establish a rigorous long-term stability result based on the combination of symplectic structure preservation and learning accuracy. Numerical experiments on canonical Hamiltonian PDEs corroborate this theoretical result and show that SNOs exhibit improved energy behavior compared with non-structure-preserving neural operators.
\end{abstract}

\section{Introduction}
\label{sec:introduction}
\begin{wrapfigure}{r}{0.45\linewidth}
  \centering
  \includegraphics[width=\linewidth]{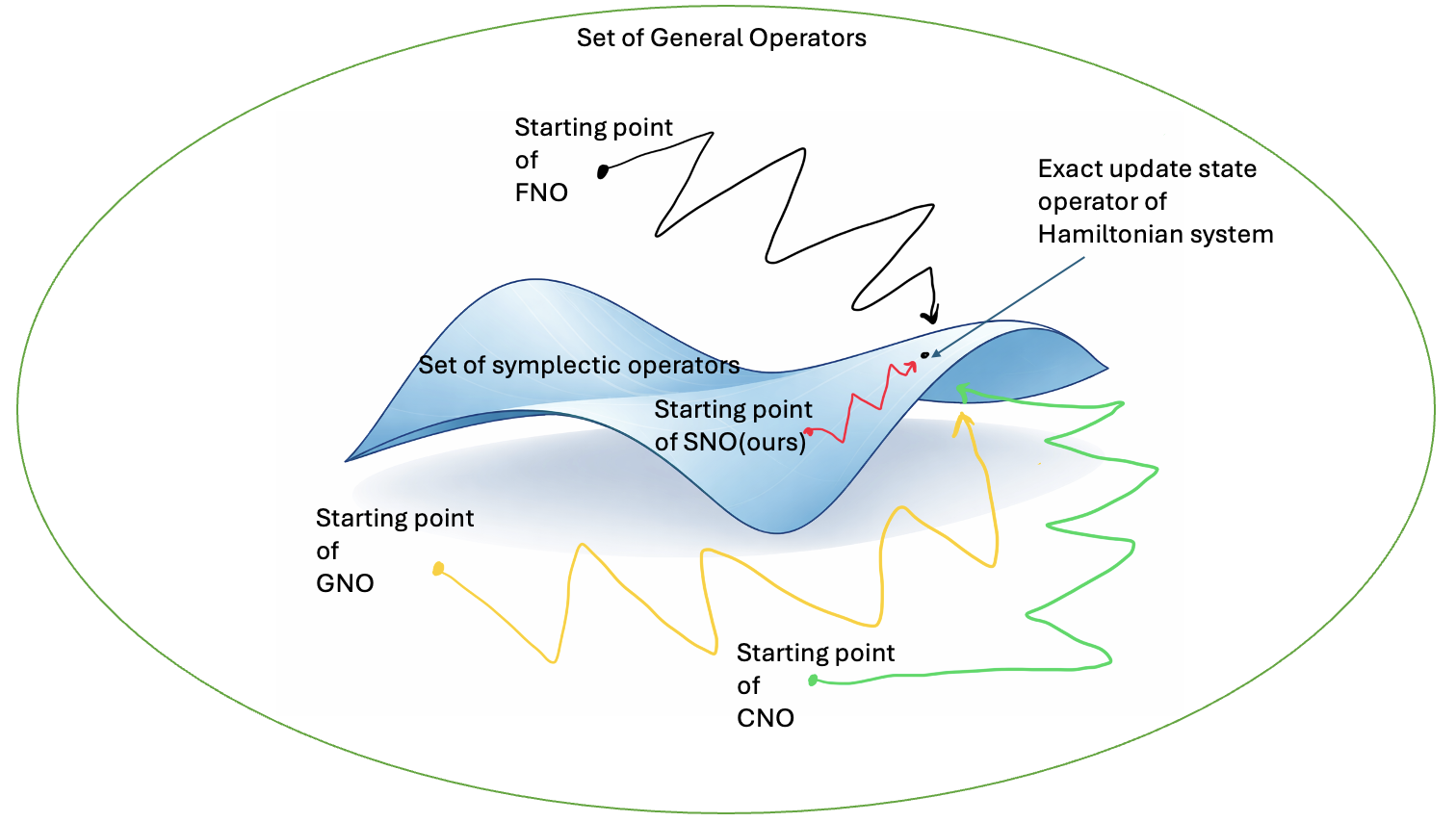}
  \caption{Efficient hypothesis class via symplectic restriction}
  \label{sno_copm}
\end{wrapfigure}
Hamiltonian mechanics provides a unifying language for conservative dynamics by combining an energy functional with a symplectic structure.
In finite dimensions, this framework underlies classical mechanical systems and yields flows that preserve qualitative properties such as phase-volume, time-reversibility, and long-time stability.
Crucially, many physical models of interest are \emph{inherently infinite-dimensional}: waves, fluids, plasmas, and field theories are naturally formulated as Hamiltonian PDEs, whose phase spaces are function spaces rather than Euclidean spaces \cite{marsden1999mechanics,bridges2001multisymplectic}.
In this setting, the time-$t$ evolution map is a \emph{symplectic operator} acting on functions, and this geometric constraint is not a cosmetic detail: it is central to faithful long-time dynamics \cite{hairerLubichWanner2006gni}.

Learning such systems from data is increasingly important for fast surrogate simulation, inverse problems, uncertainty quantification, and scientific discovery.
This goal naturally calls for models that can learn mappings \emph{between function spaces}.
Neural operators---including Fourier Neural Operator, Graph Neral Operator, Convolutional Neural Operator and DeepONet---have demonstrated strong performance for learning solution operators of PDE families by training directly on function-to-function mappings \cite{li2021fno,kovachki2023neuraloperator, li2020mgno, raonic2023cno, lu2021deeponet}.
Moreover, physics-regularized variants such as physics-informed neural operators can incorporate PDE constraints into operator learning \cite{li2021pino}.

However, for Hamiltonian systems, approximation accuracy alone is not enough.
Hamiltonian dynamics are governed not only by an evolution equation, but also by a symplectic structure that enforces geometric invariants and controls long-time behavior.
Classical results in geometric numerical integration show that \emph{symplecticity}, rather than pointwise energy matching, is the key property behind stable long-time rollouts \cite{hairerLubichWanner2006gni}.
Standard neural operators treat inputs/outputs as unconstrained functions and typically do not encode the phase-space geometry of the flow.
As a consequence, they can achieve low short-term error yet exhibit energy drift and instability under long-time rollout---a failure mode that becomes even more severe in high-dimensional or continuum regimes where small structural errors accumulate across many interacting modes.
\begin{table}
\centering
\footnotesize
\setlength{\tabcolsep}{4pt}
\renewcommand{\arraystretch}{1.0}
\caption{\small{Structural comparison of NOs.}}
\label{tab:model_comparison}
\begin{tabular}{lcc} \toprule Model & Operator & Symplectic \\ \midrule DeepONet & \checkmark & \xmark \\ FNO & \checkmark & \xmark \\ GNO & \checkmark & \xmark \\ CNO & \checkmark & \xmark \\ SympNet & \xmark & \checkmark \\ \midrule \textbf{SNO (ours)} & \checkmark & \checkmark\\ \bottomrule 
\end{tabular}
\end{table}
A large body of recent work addresses structure preservation in \emph{finite-dimensional} learned dynamics.
Hamiltonian Neural Networks learn a Hamiltonian and recover dynamics via the canonical symplectic form \cite{greydanus2019hnn}, while Lagrangian Neural Networks learn Lagrangians and avoid the need for canonical coordinates \cite{cranmer2020lnn}.
Other approaches directly parameterize symplectic maps, including SympNets and related symplectic architectures \cite{jin2020sympnets,zhong2020symoden}.
Despite their success, these models are not directly applicable to infinite-dimensional Hamiltonian PDEs in a mesh-independent way:
they typically operate on vectors in $\mathbb{R}^d$, depend on a specific discretization, and do not naturally define operators that generalize across resolutions or grids---which is a key requirement for PDEs surrogate modeling \cite{kovachki2023neuraloperator}.
\begin{table}[t]
\centering
\caption{Conceptual comparison between classical symplectic integrators and SNO.}
\begin{tabular}{lll}
\toprule
 & Classical symplectic integrator & Symplectic Neural Operator \\
\midrule
Input knowledge & \small{Hamiltonian/vector field known} & Trajectory data \\
Learned object & None & Finite-time flow map \(\Phi_H^\tau\) \\
\small{Accuracy controlled by} & Step size and method order & Learning error \(\varepsilon\) \\
Time interval & Built from small steps & Learns fixed \(\tau\)-flow directly \\
Symplecticity & By numerical scheme & By neural-operator architecture \\
\small{PDE implementation} & \small{May require discretization}  solves & \small{Direct learned operator evaluation} \\
Best use case & \small{Simulation from known equations} & Fast surrogate from data \\
\bottomrule
\end{tabular}
\end{table}\\
\textbf{Our approach.}
We introduce a \emph{symplectic neural operator} (SNO) for learning finite-time Hamiltonian flow maps in function spaces.\footnote{We make our code available at \url{https://anonymous.4open.science/r/SNO-216D}}
Rather than learning the Hamiltonian functional and then applying a separate numerical integrator, SNO directly learns the map
\(
    \Phi_H^\tau : (q(\cdot),p(\cdot)) \mapsto (q(\cdot,\tau),p(\cdot,\tau))
\)
from data.
The architecture is constructed as a composition of symplectic shear operators, thus the learned update map is symplectic by construction while retaining the discretization-invariant advantages of neural operators \cite{kovachki2023neuraloperator}.
This allows SNO to combine two complementary ideas: the geometric structure preservation of symplectic integration and the fast function-to-function surrogate modeling capability of neural operators.
The resulting model is especially attractive for repeated long-time rollouts,
backward-in-time reconstruction of Hamiltonian states, and uncertainty quantification,
where evaluating a trained operator can be substantially cheaper than repeatedly
solving the underlying Hamiltonian PDEs.\\
\textbf{Contributions.}
The main contributions of this paper are:
\begin{itemize}
  \item \textbf{Structure-preserving operator model.}
  We propose a neural-operator architecture for Hamiltonian PDEs whose learned fixed time update map is \emph{symplectic by construction}, yielding a structure-preserving function-to-function surrogate(see Table~\ref{tab:model_comparison}).
  \item \textbf{Efficient hypothesis class via symplectic restriction.}
  Because the exact flow map of a Hamiltonian system is symplectic, restricting learning to the set of symplectic operators reduces the search space relative to unconstrained neural operators.
  This acts as a strong inductive bias (physics-guided architectural prior) and can improve data efficiency and generalization(see Figure~\ref{sno_copm}).
  \item \textbf{Improved long-time rollout behavior.}
  We prove that if the learned update map is symplectic and constitutes a smooth $\varepsilon$-perturbation of the exact Hamiltonian flow (in the Hamiltonian sense), then it nearly preserves a modified Hamiltonian over long-time evolution. This provides a rigorous explanation for the controlled energy behavior and stability of SNOs.

  \item \textbf{Empirical validation on Hamiltonian PDE benchmarks.}
  We evaluate on representative Hamiltonian PDEs and compare against standard neural operators, demonstrating that symplectic structure in operator learning improves long-horizon fidelity while retaining competitive short-term accuracy.
\end{itemize}

\section{Infinite-Dimensional Hamiltonian Systems}
\label{sec:inf-dim-hamiltonian}

Many PDEs and continuum models can be viewed as
Hamiltonian dynamics on \emph{infinite-dimensional} phase spaces.
A systematic functional-analytic study of such systems, emphasizing the geometric meaning of
symplectic forms and Hamiltonian flows in infinite dimensions, appears already in the work of
Chernoff--Marsden and related developments in infinite-dimensional mechanics
\cite{chernoffMarsden1974properties}.
On the differential-geometric side, Weinstein formulated symplectic structures on Banach manifolds
and proved an infinite-dimensional strong Darboux theorem using Moser's method
\cite{weinstein1969symplecticBanach}.
A key subtlety is that in infinite dimensions the symplectic form can be \emph{weak} rather than \emph{strong},
and this distinction affects even local normal forms (Darboux charts), as shown by Marsden's counterexample
\cite{marsden1972darbouxFails} and later refinements such as Bambusi's criteria \cite{bambusi1999darbouxWeak}.

\subsection{Infinite-Dimensional Symplectic Manifolds}
\label{subsec:inf-dim-symplectic}

\begin{definition}[Weak/strong symplectic forms on Banach/Hilbert manifolds]
Let $M$ be a smooth manifold modeled on a Banach space (a Banach manifold).
A \emph{(smooth) $2$-form} $\omega$ on $M$ assigns to each $x\in M$ a continuous bilinear,
skew-symmetric map
\(
\omega_x : T_xM \times T_xM \to \mathbb{R},
\)
varying smoothly with $x$. It is \emph{closed} if $d\omega = 0$.
Define the bundle map
\(
\omega^\flat_x : T_xM \to T_x^*M, \omega^\flat_x(v) := \omega_x(v,\cdot).
\)
We call $\omega$ 
\emph{weakly nondegenerate} if $\omega^\flat_x$ is injective for all $x$, and \emph{strongly nondegenerate} if $\omega^\flat_x$ is a Banach-space isomorphism for all $x$.
A \emph{(weak/strong) symplectic manifold} is a pair $(M,\omega)$ with $\omega$ closed and weak/strongly nondegenerate.
\end{definition}
In finite dimensions, injective automatically implies bijective, so ``weak'' and ``strong''
coincide. In infinite dimensions they differ: $T_xM$ and $T_x^*M$ can be very different spaces, and
$\omega^\flat_x$ may fail to be onto even if it is one-to-one \cite{bambusi1999darbouxWeak}.
\begin{definition}[Hilbert symplectic form induced by an inner product]
Let $\bbP$ be a real Hilbert space with inner product $\langle \cdot,\cdot\rangle$.
Fix a bounded, invertible, skew-adjoint operator $J:\bbP\to\bbP$.
Define
\(
\omega(u,v) := \langle Ju, v\rangle, u,v\in\bbP.
\)
Then $\omega$ is a continuous skew-symmetric bilinear form, and it is \emph{strongly} nondegenerate because
$\omega^\flat = \mathcal{R}\circ J$, where $\mathcal{R}:\bbP\to\bbP^*$ is the Riesz isomorphism.
\end{definition}
\textbf{Canonical Hilbert phase space.}
A standard model is $\bbP = \bbH \oplus \bbH$ with $\bbH:= \bbH_q \cong \bbH_p$ and
\(
\langle (q,p),(q',p')\rangle_{\bbP} := \langle q,q'\rangle_{\bbH} + \langle p,p'\rangle_{\bbH},
J(q,p) := (-p,q).
\)
Then
\(
\omega\big((q,p),(q',p')\big) = \langle q,p'\rangle_{\bbH} - \langle p,q'\rangle_{\bbH},
\)
which is the infinite-dimensional analogue of $\sum_i dq_i\wedge dp_i$.
\subsection{Hamiltonian Vector Fields on Hilbert Symplectic Manifolds}
\label{subsec:inf-dim-ham-vectorfield}
\begin{definition}[Hamiltonian vector field]
Let $(M,\omega)$ be a \emph{strong} symplectic manifold modeled on a Banach/Hilbert space, and let
$H:M\to\mathbb{R}$ be smooth.
The \emph{Hamiltonian vector field} $X_H$ is defined by
\(\iota_{X_H}\omega = dH,\quad
\text{i.e.}\quad
\omega_x\big(X_H(x), v\big) = dH(x)[v]
\forall v\in T_xM.
\)
In the strong case, $\omega^\flat_x$ is invertible, so $X_H(x)$ exists and is unique for all $x$.
\end{definition}
\textbf{Hilbert-gradient form.}
On $\bbP$ with $\omega(u,v)=\langle Ju,v\rangle$, any $C^1$ functional $H:\bbP\to\mathbb{R}$ has a
gradient $\nabla H(u)\in\bbP$ characterized by
\(
dH(u)[v] = \langle \nabla H(u), v\rangle \quad \forall v\in\bbP.
\)
Then the Hamiltonian vector field satisfies
\(
\langle JX_H(u), v\rangle = \langle \nabla H(u), v\rangle \ \ \forall v
\Longrightarrow
JX_H(u)=\nabla H(u),
\)
hence
\(
X_H(u)=J^{-1}\nabla H(u).
\)
The \emph{Hamiltonian system} is the evolution equation
\(
\dot u(t) = X_H(u(t)) = J^{-1}\nabla H(u(t)).
\)

\begin{remark}[Gradients and Hessians in infinite-dimensional Hilbert spaces]
\label{rem:frechet-gradient-hessian}
In this paper the notions of gradient and Hessian are understood
in the infinite-dimensional, Fr\'echet-differentiable sense, and should not be confused with
their finite-dimensional matrix counterparts.
Let $\bbH$ be a real Hilbert space.
If $V:\bbH\to\mathbb{R}$ is Fr\'echet differentiable at $q$, its derivative
$DV(q)\in\bbH^*$ is a bounded linear functional.
By the Riesz representation theorem, there exists a unique element
$\nabla V(q)\in\bbH$ such that
\(
DV(q)[h] = \langle \nabla V(q),\, h\rangle  \forall h\in\bbH.
\)
The mapping $q\mapsto\nabla V(q)$ is therefore a vector-valued function on $\bbH$.
If $V$ is twice Fr\'echet differentiable, the Hessian
$D(\nabla V)(q):\bbH\to\bbH$ is a bounded linear operator defined as the Fr\'echet derivative
of the gradient map.
Unlike in finite dimensions, the self-adjointness of this operator is not automatic and must
be assumed or verified explicitly.
This assumption is essential for the symplecticity of the nonlinear shear maps defined above.
\end{remark}

\begin{proposition}[Energy conservation]
Let $u(t)$ solve $\dot u = X_H(u)$ on a symplectic manifold $(M,\omega)$ (weak or strong) on any time interval
where the solution exists. Then $H(u(t))$ is constant in $t$
\cite{chernoffMarsden1974properties}.
\end{proposition}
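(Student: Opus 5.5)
The plan is to differentiate $t\mapsto H(u(t))$ along the solution using the chain rule for Fr\'echet derivatives, and then use the defining relation $\iota_{X_H}\omega = dH$ together with skew-symmetry of $\omega$. Concretely, fix an interval $I$ on which the solution $u:I\to M$ exists and is $C^1$, meaning $u$ is differentiable as a curve in the Banach manifold with $\dot u(t)\in T_{u(t)}M$. Since $H$ is smooth (in particular $C^1$), the composition $t\mapsto H(u(t))$ is differentiable by the chain rule, with
\[
\frac{d}{dt}H(u(t)) = dH(u(t))[\dot u(t)].
\]

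Next I substitute the equation of motion $\dot u(t)=X_H(u(t))$ and apply the definition of the Hamiltonian vector field. The defining relation $\omega_x(X_H(x),v)=dH(x)[v]$ is taken with $x=u(t)$ and the admissible test vector $v=X_H(u(t))\in T_{u(t)}M$. This gives
\[
\frac{d}{dt}H(u(t)) = \omega_{u(t)}\bigl(X_H(u(t)),X_H(u(t))\bigr).
\]
The right-hand side vanishes because $\omega_{u(t)}$ is skew-symmetric, so $\omega_x(w,w)=-\omega_x(w,w)=0$. Hence the derivative is identically zero on $I$. By the mean value theorem for real functions, $H(u(t))$ is constant on the interval. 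In the Hilbert model this is just $\langle \nabla H(u), J^{-1}\nabla H(u)\rangle=0$, which follows from skew-adjointness of $J^{-1}$.

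The only point needing care is the weak case. There $X_H$ need not exist for every $H$, so the statement must be read as: $X_H$ is assumed to exist (i.e.\ $dH(x)\in\operatorname{range}\omega^\flat_x$) along the solution, and $u$ is a $C^1$ solution in the manifold topology. Closedness and nondegeneracy of $\omega$ are not needed at all. Weak nondegeneracy only guarantees uniqueness of $X_H$ and plays no role in the computation.

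The main technical obstacle is the regularity of the solution. For PDEs, $X_H$ is often only densely defined, so the chain rule step requires $u(t)$ to stay in the domain where $dH$ is defined and $u$ to be differentiable there. I would handle this by hypothesis (classical solutions). If necessary, I would use a density argument that approximates by smooth solutions and passes to the limit using continuity of $H$.
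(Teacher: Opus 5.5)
Your proof is correct and matches the paper. The paper gives no proof of its own and simply cites Chernoff--Marsden, and your computation $\frac{d}{dt}H(u(t)) = dH(u(t))[X_H(u(t))] = \omega_{u(t)}(X_H(u(t)),X_H(u(t))) = 0$, with your reading of the weak case (existence of $X_H$ along the solution is assumed) and your remark that only skew-symmetry of $\omega$ is used, is the standard argument behind that cited result. The density-approximation fallback is unnecessary under the stated hypotheses; if you ever did invoke it, you would also need continuous dependence of solutions on initial data, in a topology where $H$ is continuous, which the statement does not give you.
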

\begin{proposition}[Symplecticity of the flow]
Assume $(M,\omega)$ is symplectic and $H$ is smooth so that $X_H$ exists and generates a (local) flow $\Phi^t$.
Then $(\Phi^t)^*\omega=\omega$ wherever the flow is defined
\cite{chernoffMarsden1974properties,weinstein1969symplecticBanach}.
\end{proposition}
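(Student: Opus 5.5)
The plan is to use Cartan's formula for the Lie derivative of $\omega$ along $X_H$, and then integrate the resulting identity along the flow. Fix a point $x$ and a time $t$ at which $\Phi^t(x)$ is defined. Consider the curve $s\mapsto (\Phi^s)^*\omega$ at $x$, for $s$ between $0$ and $t$. Local flows have the group property $\Phi^{s+h}=\Phi^h\circ\Phi^s$ wherever both sides are defined. Using it, I will compute
\[
\frac{d}{ds}(\Phi^s)^*\omega=(\Phi^s)^*\mathcal{L}_{X_H}\omega .
\]
This is the standard identity for the derivative of a pullback along a flow. It holds on Banach manifolds verbatim, since it only uses smooth dependence of $\Phi^s$ on $(s,x)$ and the chain rule.

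Next I apply Cartan's magic formula, which is valid on Banach manifolds for smooth forms and vector fields:
\[
\mathcal{L}_{X_H}\omega=d\,\iota_{X_H}\omega+\iota_{X_H}d\omega=d(dH)+0=0 .
\]
The first term vanishes because $\iota_{X_H}\omega=dH$ by the definition of $X_H$ and $d^2=0$. The second vanishes because $\omega$ is closed. Hence the derivative above is zero for every $s$. Since $(\Phi^0)^*\omega=\omega$, integrating in $s$ gives $(\Phi^t)^*\omega=\omega$ at $x$.

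For the canonical Hilbert phase space with $\omega(u,v)=\langle Ju,v\rangle$ there is a more concrete check, which I would include as a remark. Let $A(s)=D\Phi^s(x)$. It satisfies the variational equation $\dot A=J^{-1}D^2H(\Phi^s x)A$. Then
\[
\frac{d}{ds}\langle JA u,Av\rangle=\langle D^2H\,Au,Av\rangle-\langle Au,D^2H\,Av\rangle=0 .
\]
Here I use $J^*=-J$ and the self-adjointness of the Hessian, as noted in Remark~\ref{rem:frechet-gradient-hessian}.

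The main obstacle is justifying the calculus in infinite dimensions. Specifically, I need that $(s,x)\mapsto\Phi^s(x)$ is $C^1$ and that $D\Phi^s$ solves the variational equation. This follows from smooth dependence on initial data for ODEs on Banach spaces, which requires $X_H$ to be smooth, i.e.\ locally Lipschitz with smooth derivatives. In the weak case, existence of $X_H$ is exactly what the statement assumes, so Cartan's formula still applies and no inversion of $\omega^\flat$ is needed. For unbounded PDE vector fields, one would instead restrict to a dense domain on which the flow is differentiable and extend by continuity of $\omega$. I would note this caveat rather than prove it.
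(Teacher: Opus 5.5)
Your argument is correct. The paper gives no proof of its own for this proposition and simply cites Chernoff--Marsden and Weinstein, and your computation $\frac{d}{ds}(\Phi^s)^*\omega=(\Phi^s)^*(d\,\iota_{X_H}\omega+\iota_{X_H}d\omega)=(\Phi^s)^*(d\,dH)=0$ is the standard argument from that literature, as is your linear-space check via the variational equation and the $\omega$-skewness of $DX_H=J^{-1}D(\nabla H)$.

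Your caveats are also the right ones: smoothness of $X_H$ must be assumed in the weak case, and unbounded generators need a density argument. In addition, for $C^2$ $H$ the self-adjointness of $D(\nabla H)$ follows automatically from the symmetry of $D^2H$, so your concrete check needs no extra assumption there.
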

\subsection{What Remains of Darboux's Theorem in Infinite Dimensions}
\label{subsec:darboux-inf-dim}
In finite dimensions, Darboux's theorem says that every symplectic form is locally equivalent to the standard
constant form. In infinite dimensions, the situation splits dramatically depending on whether $\omega$ is strong
or weak.
\begin{theorem}[Strong Darboux theorem in Banach/Hilbert setting {\cite{weinstein1969symplecticBanach}}]
Let $(M,\omega)$ be a \emph{strong} symplectic Banach manifold. Then for every $x\in M$, there exist local
coordinates near $x$ in which $\omega$ becomes constant (i.e.\ locally equivalent to $\omega_x$).
\end{theorem}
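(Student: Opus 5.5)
The plan is Moser's path method, carried out in a single Banach chart. First I localize. Choose a chart $\varphi$ around $x$ with $\varphi(x)=0$, so that $\omega$ becomes a smooth family of skew bilinear forms $\omega_y$ on a ball $B\subset E$, where $E$ is the model space. Put $\omega_0:=\omega_{y=0}$, a constant form on $E$. Strong nondegeneracy of $\omega$ at $x$ means $\omega_0^\flat:E\to E^*$ is a Banach isomorphism. Next, interpolate by $\omega_t:=\omega_0+t(\omega-\omega_0)$ for $t\in[0,1]$. Since $\omega_t^\flat=\omega_0^\flat+t(\omega^\flat-\omega_0^\flat)$ and $\|\omega^\flat_y-\omega_0^\flat\|\to0$ as $y\to0$, a Neumann series argument shows that, after shrinking $B$, every $\omega_t^\flat$ is an isomorphism, with inverses uniformly bounded in $(t,y)$ and depending smoothly on both. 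This is exactly where strongness is used, and where the argument breaks for weak forms (compare \cite{marsden1972darbouxFails}).

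Second, I need a primitive. The form $\omega-\omega_0$ is closed on the ball, and I apply the Poincaré lemma on the star-shaped set $B$ in its homotopy-operator form:
\[
\alpha_y(v)=\int_0^1 s\,(\omega-\omega_0)_{sy}(y,v)\,ds .
\]
This gives $d\alpha=\omega-\omega_0$ and $\alpha_0=0$, and the formula works verbatim in Banach spaces, with smoothness coming from differentiating under the integral. Then I define the time-dependent vector field $X_t$ by $\iota_{X_t}\omega_t=-\alpha$, that is, $X_t=-(\omega_t^\flat)^{-1}\alpha$. This is well defined and smooth precisely by the invertibility established in the first step. It also satisfies $X_t(0)=0$.

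Third, integrate $X_t$ to get a flow $\psi_t$. Because $X_t(0)=0$ and $X_t$ is locally Lipschitz, uniformly in $t$, the standard ODE existence theory on Banach spaces yields a smaller ball $B'$ on which $\psi_t$ is defined for all $t\in[0,1]$, is smooth, and fixes $0$. Cartan's formula then gives
\[
\tfrac{d}{dt}\psi_t^*\omega_t=\psi_t^*\bigl(\mathcal L_{X_t}\omega_t+\tfrac{d}{dt}\omega_t\bigr)=\psi_t^*\bigl(d\iota_{X_t}\omega_t+\omega-\omega_0\bigr)=\psi_t^*(-d\alpha+d\alpha)=0,
\]
using $d\omega_t=0$. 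Hence $\psi_1^*\omega=\psi_0^*\omega_0=\omega_0$, and the chart $\varphi^{-1}\circ\psi_1$ is the desired Darboux chart in which $\omega$ is constant, equal to $\omega_x$.

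I expect the main obstacle to be the uniformity in the first step: showing that the inverse $(\omega_t^\flat)^{-1}$ exists and is smooth on a common neighbourhood for all $t$, so that $X_t$ is smooth and the flow exists up to time $1$. I would handle this with the Neumann series bound $\|(\omega_t^\flat)^{-1}\|\le \|(\omega_0^\flat)^{-1}\|/(1-\|(\omega_0^\flat)^{-1}\|\,\|\omega^\flat-\omega_0^\flat\|)$, together with smoothness of inversion on the open set of isomorphisms. The other infinite-dimensional checks, namely validity of Cartan's formula and of the Poincaré lemma for Banach-space forms, are routine, since both follow from the chain rule in a single chart.
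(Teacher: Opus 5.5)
Your proposal is correct, and it is essentially the argument the paper relies on. The paper gives no proof of its own; it cites Weinstein, who proves the statement by this same Moser path method, with strong nondegeneracy used only to invert $\omega_t^\flat$ uniformly in $t\in[0,1]$ near $x$ (your Neumann-series step), so that $X_t=-(\omega_t^\flat)^{-1}\alpha$ is smooth, vanishes at the base point and integrates up to time $1$, while your homotopy-operator primitive and the Cartan computation $\tfrac{d}{dt}\psi_t^*\omega_t=0$ check out as written.
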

\textbf{Weak case Darboux can fail.}
Marsden constructed a weak symplectic form on a Hilbert space for which no Darboux chart exists,
so \emph{weak symplectic} does not imply local constancy of $\omega$ in general \cite{marsden1972darbouxFails}.\\
\textbf{When can Darboux be recovered for weak forms?}
Bambusi introduced a ``classifying space'' (a Banach space canonically associated to $\omega_x$) and proved that
a weak symplectic form can be reduced to a constant one under a local constancy condition on these spaces,
together with suitable smoothness assumptions \cite{bambusi1999darbouxWeak}.
More recent work discusses additional settings (e.g.\ projective limits / Fr\'echet-type constructions) where Moser's
method may fail without strong hypotheses \cite{pelletier2021projectiveLimitDarboux}.

\section{Problem Setup}
\label{sec:learn-discrete-flow}
In this paper we focus on \emph{learning the time-$\tau$ flow map} of an
(infinite-dimensional) Hamiltonian system from data, rather than learning the
Hamiltonian vector field directly.
This viewpoint is particularly natural for PDEs, where the state at each time
is a function and the evolution can be seen as an operator acting on function
spaces.\\
\textbf{Continuous Hamiltonian Flow and Discrete-Time Sampling:}
\label{subsec:flow-and-sampling}
Let $H:\bbP\to\mathbb{R}$ be a smooth Hamiltonian such that the Hamiltonian
vector field $X_H$ generates a flow $\Phi^t:\bbP\to\bbP$:
\(
\frac{d}{dt}u(t)=X_H(u(t)),u(0)=u_0, u(t)=\Phi^t(u_0).
\)
Whenever it is defined, the flow satisfies the group property
\(
\Phi^{t+s}=\Phi^t\circ\Phi^s, \Phi^0=\mathrm{Id},
\)
and it is symplectic:
\(
(\Phi^t)^*\omega=\omega.
\)
Fix a time $\tau>0$. The \emph{time-$\tau$ flow map} is
the operator
\(
\Phi^\tau:\bbP\to\bbP,
u_{n+1}=\Phi^\tau(u_n),
t_n:=n\tau.
\)
Thus, learning discrete dynamics amounts to approximating the operator $\Phi^\tau$
from data. \\
\textbf{Learning the Flow Map as an Operator Regression Problem:}
Let $\mathcal{U}\subset\bbP$ be a set of admissible states (e.g.\ bounded-energy
states) on which the time-$h$ map is well-defined.
We consider a dataset of one-step pairs
\(
\mathcal{D}_N=\big\{(u^{(i)},v^{(i)})\big\}_{i=1}^N
\subset \mathcal{U}\times\mathcal{U},
v^{(i)}\approx \Phi^\tau\big(u^{(i)}\big).
\)
The goal is to construct a parametric family of maps
\(
\mathcal{N}_\theta:\mathcal{U}\to\mathcal{U},
\)
indexed by parameters $\theta\in\Theta$, such that $\mathcal{N}_\theta\approx \Phi^\tau$.
A common approach is empirical risk minimization:
\(
\theta^\star \in \arg\min_{\theta\in\Theta}\;
\frac{1}{N}\sum_{i=1}^N
\ell\big(\mathcal{N}_\theta(u^{(i)}),\, v^{(i)}\big),
\)
where $\ell$ is a loss functional. For instance, using the Hilbert norm on $\bbP$,
\(
\ell(\hat v,v)=\|\hat v-v\|_{\bbP}^2.
\)
Depending on the PDE and the desired regularity, one may also use Sobolev-type
losses (e.g.\ $\|\cdot\|_{\bbS}$ norms) to penalize high-frequency errors.\\
\textbf{Rollout (iterated prediction).}
Once a one-step model $\mathcal{N}_\theta$ is learned, we can predict multiple
steps by composition:
\(
\hat u_{n+1}=\mathcal{N}_\theta(\hat u_n),
\hat u_n = \mathcal{N}_\theta^{\,n}(u_0).
\)
A key difficulty is that small one-step errors can accumulate over long rollouts.
For Hamiltonian systems, structure preservation (especially symplecticity) often
plays a decisive role in preventing catastrophic drift in long-time prediction.

\section{Proposed Method}
\subsection{Symplectic Operators} 
\label{subsec:symplectic-operators}
\begin{definition}[Linear symplectic operator]
A bounded invertible linear operator $S:\bbP\to\bbP$ is called \emph{symplectic} if it preserves $\omega$:
\(
\omega(Su,Sv)=\omega(u,v)\quad \forall u,v\in\bbP.
\)
Equivalently,
\(
S^* J S = J.
\)
We denote the group of such operators by $\mathrm{Sp}(\bbP,J)$.
\end{definition}
\MEMO{We first show the following two important propositions to construct the proposed symplectic neural operators.}
\begin{proposition}
\label{prop:composition-sympl}
If $S,T\in \mathrm{Sp}(\bbP,J)$, then $ST\in \mathrm{Sp}(\bbP,J)$ and $S^{-1}\in \mathrm{Sp}(\bbP,J)$.
\end{proposition}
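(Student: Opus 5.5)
The plan is to verify both closure properties directly from the defining identity $\omega(Su,Sv)=\omega(u,v)$, using the operator form $S^*JS=J$ only as a cross-check. The argument is purely algebraic. The one point needing care is that membership in $\mathrm{Sp}(\bbP,J)$ also requires the operator to be bounded and invertible, so this must be checked alongside the form identity.

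\textbf{Products.} Let $S,T\in\mathrm{Sp}(\bbP,J)$. First, $ST$ is bounded as a composition of bounded operators. It is invertible with inverse $T^{-1}S^{-1}$, which is bounded because $S^{-1}$ and $T^{-1}$ are bounded (by hypothesis, or by the bounded inverse theorem). For any $u,v\in\bbP$, applying the identity first for $S$ and then for $T$ gives
\[
\omega(STu,STv)=\omega(Tu,Tv)=\omega(u,v).
\]
Equivalently, $(ST)^*J(ST)=T^*(S^*JS)T=T^*JT=J$, using $(ST)^*=T^*S^*$ for bounded operators on a Hilbert space.

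\textbf{Inverses.} Let $S\in\mathrm{Sp}(\bbP,J)$. Then $S^{-1}$ is bounded and invertible, with inverse $S$. Given $u,v\in\bbP$, set $u'=S^{-1}u$ and $v'=S^{-1}v$. The identity for $S$ applied to $u',v'$ gives
\[
\omega(S^{-1}u,S^{-1}v)=\omega(u',v')=\omega(Su',Sv')=\omega(u,v).
\]
In operator form, multiplying $S^*JS=J$ on the left by $(S^*)^{-1}=(S^{-1})^*$ and on the right by $S^{-1}$ yields $J=(S^{-1})^*JS^{-1}$. This uses that the adjoint of a bounded invertible operator is invertible, with $(S^*)^{-1}=(S^{-1})^*$.

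There is no real obstacle. The only infinite-dimensional subtlety is the boundedness of inverses and the adjoint identity $(S^{-1})^*=(S^*)^{-1}$, both standard for bounded operators on Hilbert spaces. Combined with $\mathrm{Id}\in\mathrm{Sp}(\bbP,J)$, these two properties show that $\mathrm{Sp}(\bbP,J)$ is a group, which justifies the terminology in the definition.
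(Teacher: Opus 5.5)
Your proof is correct and matches the paper's argument. The paper uses exactly the operator computation $(ST)^*J(ST)=T^*(S^*JS)T=T^*JT=J$, and it conjugates $S^*JS=J$ by $S^{-1}$ to obtain $(S^{-1})^*JS^{-1}=J$; your form-level version and your check that $ST$ and $S^{-1}$ stay bounded and invertible are details the paper leaves implicit.
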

\begin{proposition}[Shear-type linear symplectic operators]
\label{prop:linear-shear}
Let $A:\bbH\to\bbH$ be bounded and self-adjoint. Define
\(
S_A(q,p):=(q,\ p + Aq).
\)
Then $S_A$ is symplectic. Likewise, if $B:\bbH\to\bbH$ is bounded and self-adjoint, then
\(
T_B(q,p):=(q + Bp,\ p)
\)
is symplectic.
\end{proposition}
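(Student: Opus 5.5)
The plan is a direct verification from the definition of a linear symplectic operator. I will check that $\omega(S_Au,S_Av)=\omega(u,v)$ for all $u,v\in\bbP$, and separately that $S_A$ is bounded and invertible, so that $S_A\in\mathrm{Sp}(\bbP,J)$.

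\emph{Boundedness and invertibility.} Boundedness is immediate from $\|S_A(q,p)\|\le (1+\|A\|)\|(q,p)\|$. An explicit inverse is $S_A^{-1}=S_{-A}$, since $S_{-A}S_A(q,p)=(q,\,p+Aq-Aq)=(q,p)$, and the composition in the other order is checked the same way. The operator $-A$ is again bounded and self-adjoint, so the inverse is bounded.

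\emph{Preservation of $\omega$.} Write $u=(q,p)$ and $v=(q',p')$, and use the canonical formula $\omega((q,p),(q',p'))=\langle q,p'\rangle-\langle p,q'\rangle$. Then
\[
\omega(S_Au,S_Av)=\langle q,p'+Aq'\rangle-\langle p+Aq,q'\rangle=\omega(u,v)+\langle q,Aq'\rangle-\langle Aq,q'\rangle .
\]
The extra term vanishes precisely because $A$ is self-adjoint: $\langle Aq,q'\rangle=\langle q,Aq'\rangle$. This is the only point where self-adjointness is used, and it is the crux of the statement (compare Remark~\ref{rem:frechet-gradient-hessian}). Equivalently, one can write $S_A$ as the block operator with rows $(I,0)$ and $(A,I)$ and verify $S_A^*JS_A=J$, using $A^*=A$.

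\emph{The shear $T_B$.} The computation is symmetric:
\[
\omega(T_Bu,T_Bv)=\langle q+Bp,p'\rangle-\langle p,q'+Bp'\rangle=\omega(u,v)+\langle Bp,p'\rangle-\langle p,Bp'\rangle ,
\]
and the extra term is zero by self-adjointness of $B$. The inverse is $T_{-B}$. There is no real obstacle here; the only care needed is to track the sign convention for $J$ consistently with the stated formula for $\omega$.
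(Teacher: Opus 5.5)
Your proof is correct and follows essentially the same route as the paper, which verifies $S_A^*JS_A=J$ in block-operator form; that is the same cross-term cancellation via $A^*=A$ that you carry out directly in $\omega$, and the $T_B$ case is likewise left as analogous. You also check boundedness and invertibility ($S_A^{-1}=S_{-A}$), which the definition of $\mathrm{Sp}(\mathbb{P},J)$ requires but the paper's proof leaves implicit.
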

These shear-type symplectic operators play a fundamental role in both the
theoretical structure of $\mathrm{Sp}(\bbP,J)$ and in practical constructions,
as they admit simple parametrizations and will later serve as building blocks
for symplectic neural operators.
We now extend the notion of symplecticity from linear operators to nonlinear maps
acting on infinite-dimensional phase spaces.
\begin{definition}[Nonlinear symplectic map / symplectomorphism]
Let $(M,\omega)$ be a (strong) symplectic Banach or Hilbert manifold.
A map
\(
\Phi : M \to M
\)
is called a \emph{(nonlinear) symplectic map} or \emph{symplectomorphism} if: $\Phi$ is a $C^1$ diffeomorphism of $M$;
and 
\(
\Phi^*\omega = \omega.
\)
\end{definition}
\textbf{Derivative characterization.}
Equivalently, $\Phi$ is symplectic if and only if for every $x\in M$ and all
$v,w\in T_xM$,
\(
\omega_{\Phi(x)}\big(D\Phi(x)v,\, D\Phi(x)w\big)
=
\omega_x(v,w),
\)
where $D\Phi(x):T_xM\to T_{\Phi(x)}M$ denotes the Fréchet derivative.\\
\textbf{Hilbert phase space formulation.}
A $C^1$ diffeomorphism $\Phi:\bbP\to\bbP$ is symplectic if and only if
\(
(D\Phi(u))^*\, J\, D\Phi(u) = J, 
\forall u\in\bbP,
\)
where $(D\Phi(u))^*$ denotes the Hilbert adjoint.
\begin{proposition}[Shear-type nonlinear symplectic operators]
\label{prop:nonlinear-shear}
Let $V:\bbH\to\mathbb{R}$ be $C^2$ (Fr\'echet) and assume the Hessian operator
$D(\nabla V)(q):\bbH\to\bbH$ is self-adjoint for every $q$.
Define the nonlinear shear
\(
\Phi_V(q,p):=\big(q,\ p + \nabla V(q)\big).
\)
Then $\Phi_V$ is a symplectomorphism on its domain.
Similarly, if $W:\bbH\to\mathbb{R}$ is $C^2$ with self-adjoint $D(\nabla W)(p)$, then
\(
\Psi_W(q,p):=\big(q+\nabla W(p),\ p\big)
\)
is symplectic.
\end{proposition}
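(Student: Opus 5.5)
The plan is to verify the two defining properties of a symplectomorphism directly: $\Phi_V$ is a $C^1$ diffeomorphism, and its Fréchet derivative satisfies $(D\Phi_V(u))^* J D\Phi_V(u) = J$ at every $u$. The derivative test is the Hilbert phase space characterization stated just before the proposition. I will treat $\Phi_V$ in full and then get $\Psi_W$ by the symmetric argument, swapping the roles of $q$ and $p$.

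\emph{Diffeomorphism.} Since $V$ is $C^2$, the gradient map $q\mapsto\nabla V(q)$ is $C^1$ from $\bbH$ to $\bbH$, so $\Phi_V$ is $C^1$. The candidate inverse is explicit: $\Phi_V^{-1}(q,p) = (q,\, p-\nabla V(q))$. It is also $C^1$, and composing in either order gives the identity. Hence $\Phi_V$ is a global $C^1$ diffeomorphism of $\bbP=\bbH\oplus\bbH$, with no inverse function theorem needed.

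\emph{Derivative.} Fix $u=(q,p)$ and set $A := D(\nabla V)(q)$. This is bounded because $V\in C^2$, and self-adjoint by hypothesis (cf.\ Remark~\ref{rem:frechet-gradient-hessian}). Differentiating componentwise gives
\[
D\Phi_V(u)[(h,k)] = (h,\; k + A h),
\]
that is, $D\Phi_V(u) = S_A$, the linear shear of Proposition~\ref{prop:linear-shear}. That proposition says $S_A$ is symplectic, i.e.\ $S_A^* J S_A = J$. So the derivative condition holds at every $u$, and $\Phi_V$ is a symplectomorphism.

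As a sanity check on Proposition~\ref{prop:linear-shear}, one can compute $\omega(S_A(h,k), S_A(h',k'))$ directly. It equals
\[
\langle h, k'+Ah'\rangle - \langle k+Ah, h'\rangle = \omega((h,k),(h',k')) + \langle h, Ah'\rangle - \langle Ah, h'\rangle,
\]
and the last two terms cancel exactly by self-adjointness of $A$.

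For $\Psi_W$, the inverse is $(q,p)\mapsto(q-\nabla W(p),\,p)$. The derivative is $D\Psi_W(u) = T_B$ with $B = D(\nabla W)(p)$, and $T_B$ is symplectic by the second half of Proposition~\ref{prop:linear-shear}.

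The only delicate point is the self-adjointness of the Hessian. For $C^2$ Fréchet functions this in fact follows from the symmetry of the second derivative $D^2V(q)[h,h']$ (Schwarz's theorem in Banach spaces). The proposition takes it as a hypothesis, so I will simply invoke it. Everything else is a direct computation.
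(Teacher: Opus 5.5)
Your proposal is correct and follows the paper's proof: you compute $D\Phi_V(q,p)$, recognize it as the linear shear $S_A$ with $A=D(\nabla V)(q)$ self-adjoint, and invoke Proposition~\ref{prop:linear-shear} pointwise, treating $\Psi_W$ the same way via $T_B$. Your explicit inverse $(q,p)\mapsto(q,\,p-\nabla V(q))$ also checks the $C^1$-diffeomorphism requirement in the definition of a symplectomorphism, which the paper's proof leaves implicit.
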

The explicit and local structure of shear-type nonlinear symplectic maps makes
them particularly suitable for parametrization by neural operators, where
$\nabla V$ and $\nabla W$ can be approximated by operator-valued neural networks
while preserving symplecticity by construction.
\subsection{Self-adjoint Fourier Neural Operators}
In this subsection, we introduce a new class of neural operators, called
\emph{Self-adjoint Fourier Neural Operators} (SAFNOs),
which serve as fundamental building blocks for linear symplectic neural operators.
Unlike the finite-dimensional setting, where a symmetric linear map can
always be constructed by an explicit algebraic symmetrization such as
$A \mapsto \tfrac12(A + A^\top)$, enforcing self-adjointness for operators acting on
function spaces is substantially more delicate.
In infinite dimensions, self-adjointness is not merely an algebraic property of a matrix
representation, but an operator-theoretic property that depends on the choice of function
space, inner product, operator domain, and boundary conditions.
In particular, the adjoint of a linear operator on a function space cannot, in general,
be computed as simply as taking the transpose of a matrix, and may fail to exist or coincide
with the original operator unless these structures are carefully aligned.
For these reasons, the design of self-adjoint neural operators must be carried out directly
at the level of function spaces, rather than imposed \emph{post hoc} through
finite-dimensional discretizations.
The SAFNO framework introduced below addresses this challenge by enforcing self-adjointness
through explicit spectral constraints on the Fourier symbols, yielding operators that are
provably self-adjoint on the target Hilbert space and stable under mesh refinement.
We begin by setting up the underlying function spaces and introducing preliminary tools
needed for the construction of our models and for proving their self-adjointness.
We write the $n$-torus as
\(
\mathbb{T}^n := (\mathbb{R}/\mathbb{Z})^n,
\)
endowed with the Haar probability measure $dx$.
\begin{theorem}[Unitarity of the Fourier transform on $\mathbb{T}^n$]\label{uf}
Define
\(
\F:L^2(\mathbb{T}^n;\mathbb{C}^m)\to \ell^2(\mathbb{Z}^n;\mathbb{C}^m),\)
\(
(\F v)(k):=\int_{\mathbb{T}^n} v(x)e^{-2\pi i k\cdot x}\,dx,
\)
where the integral is taken componentwise.
Then $\F$ is a unitary operator. Its inverse is given by the Fourier series
\(
(\Finve \hat v)(x)=\sum_{k\in\mathbb{Z}^n}\hat v(k)e^{2\pi i k\cdot x},
\)
with convergence in 
\(L^2(\mathbb{T}^n;\mathbb{C}^m),
\)
and $\Finve=\F^*$.
\end{theorem}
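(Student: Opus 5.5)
The plan is the classical Hilbert-space argument: show that the exponentials form an orthonormal basis of $L^2(\mathbb{T}^n;\mathbb{C}^m)$, then identify $\F$ with the coefficient map of that basis. The vector-valued case reduces to the scalar one by working componentwise. The inner product on $L^2(\mathbb{T}^n;\mathbb{C}^m)$ is $\langle u,v\rangle=\int_{\mathbb{T}^n}\sum_j u_j\overline{v_j}\,dx$, and $\ell^2(\mathbb{Z}^n;\mathbb{C}^m)$ carries the analogous sum over $k$ and $j$. It therefore suffices to treat $m=1$ and take direct sums.

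\textbf{Step 1: orthonormality.} Set $e_k(x)=e^{2\pi i k\cdot x}$. Using Fubini on the product measure $dx$, which is Haar probability, one computes $\langle e_k,e_l\rangle=\prod_{j=1}^n\int_0^1 e^{2\pi i (k_j-l_j)x_j}\,dx_j=\delta_{kl}$. With this, $(\F v)(k)=\langle v,e_k\rangle$. Bessel's inequality then gives $\F v\in\ell^2$ and $\|\F v\|\le\|v\|$.

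\textbf{Step 2: completeness (the main obstacle).} We need $\overline{\mathrm{span}}\{e_k\}=L^2(\mathbb{T}^n)$. I would argue as follows. Trigonometric polynomials form a unital $*$-subalgebra of $C(\mathbb{T}^n)$, since $e_ke_l=e_{k+l}$ and $\overline{e_k}=e_{-k}$. This subalgebra separates points of the compact space $\mathbb{T}^n$, because the coordinates $e^{2\pi i x_j}$ already do. By the complex Stone--Weierstrass theorem it is therefore uniformly dense in $C(\mathbb{T}^n)$. Next, $C(\mathbb{T}^n)$ is dense in $L^2(\mathbb{T}^n)$ for the regular Borel probability measure $dx$. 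Finally, uniform convergence implies $L^2$ convergence because the measure is finite. Together these give density. An alternative route is Fejér-kernel approximation, but Stone--Weierstrass avoids kernel estimates in dimension $n$.

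\textbf{Step 3: unitarity and the inverse.} Completeness upgrades Bessel's inequality to Parseval's identity, so $\F$ is an isometry and $v=\sum_k\langle v,e_k\rangle e_k$ with convergence in $L^2$. For surjectivity, take any $\hat v\in\ell^2(\mathbb{Z}^n)$. The partial sums $\sum_{|k|\le N}\hat v(k)e_k$ are Cauchy in $L^2$ by orthonormality, so they converge to some $v$. Continuity of the inner product gives $\F v=\hat v$. Hence $\F$ is a bijective isometry, i.e.\ unitary, and its inverse is $\Finve\hat v=\sum_k\hat v(k)e_k$ with $L^2$ convergence. The relation $\Finve=\F^*$ follows because the inverse of a unitary equals its adjoint. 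It can also be checked directly from $\langle \F v,\hat w\rangle_{\ell^2}=\sum_k\langle v,e_k\rangle\overline{\hat w(k)}=\langle v,\sum_k \hat w(k)e_k\rangle$.
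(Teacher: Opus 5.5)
Your proposal is correct and follows essentially the same route as the paper's proof. Both arguments get the isometry from Parseval, define the inverse as the $L^2$-limit of the Cauchy partial sums $\sum_{|k|\le N}\hat v(k)e^{2\pi i k\cdot x}$, obtain $\F(\Finve\hat v)=\hat v$ coefficientwise by continuity, and treat the vector-valued case componentwise. The one real difference is that you prove completeness of the exponentials (Stone--Weierstrass plus density of $C(\mathbb{T}^n)$ in $L^2$), whereas the paper takes Parseval's identity and the orthonormal-basis property on $\mathbb{T}^n$ from the cited references, so your version is more self-contained.
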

We can prove this theorem by using Parseval identity on $\bbT^n$; see, for example,
\cite{ReedSimonI, RieszNagy,EinsiedlerWard2017,Borthwick2020}.
\begin{theorem}[Self-Adjoint Linear Fourier Neural Operator]\label{safno}
Let
$\mathbb{T}^n := (\mathbb{R}/\mathbb{Z})^n$
be the $n$-dimensional torus equipped with the Haar probability measure.
Define the Hilbert spaces
$
\Hd := L^2(\mathbb{T}^n;\mathbb{C}^d),
\quad
\Hm := L^2(\mathbb{T}^n;\mathbb{C}^m),
$ 
with the complex Hilbert inner products.
Let
$
\F : \Hm \to \ell^2(\mathbb{Z}^n;\mathbb{C}^m)
$
denote the Fourier transform on $\mathbb{T}^n$.
Let $P\in\mathbb{C}^{m\times d}$ and for each $k=0,\dots,L-1$, let $W_k\in\mathbb{C}^{m\times m}$ satisfy the adjoint pair symmetry $W_k = W_{L-1-k}^*$
and $R_k:\mathbb{Z}^n\to\mathbb{C}^{m\times m}$ satisfy
      $\sup_{\xi\in\mathbb{Z}^n}\|R_k(\xi)\|_{\op}<\infty$ and the adjoint pair symmetry $R_k(\xi)=R_{L-1-k}(\xi)^*$
      for all $\xi\in\mathbb{Z}^n$.
Define linear operators $T_k:\Hm\to\Hm$ by
\(
(T_k v)(x)
:= W_k v(x)
 + \Finve\!\big(\xi\mapsto R_k(\xi)\,(\F v)(\xi)\big)(x),
\)
and define the $L$-layer linear Fourier Neural Operator $G:\Hd\to\Hd$ by
\(
v_0 = P a,
\quad
v_{k+1}=T_k v_k \ (k=0,\dots,L-1),
\quad
G(a)=P^* v_L.
\)
Then $G$ is a bounded self-adjoint operator on $\Hd$, i.e.
\(
\inner{G(a)}{b}_{\Hd}=\inner{a}{G(b)}_{\Hd}
\text{ for all }a,b\in\Hd.
\)
\end{theorem}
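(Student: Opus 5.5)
The plan is to write $G$ as a composition of bounded operators and compute its Hilbert adjoint directly. The adjoint pair symmetry should make the reversed product of adjoints coincide with the original product.

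\textbf{Step 1: factor $G$.} Let $\mathcal{P}:\Hd\to\Hm$ be pointwise multiplication $(\mathcal{P}a)(x)=Pa(x)$. Then
\(
G=\mathcal{P}^*\,T_{L-1}T_{L-2}\cdots T_0\,\mathcal{P}.
\)
I first check that $\mathcal{P}^*$ is pointwise multiplication by the conjugate transpose $P^*$. This follows from
\(
\inner{Pa}{v}_{\Hm}=\int_{\bbT^n}\langle Pa(x),v(x)\rangle_{\mathbb{C}^m}\,dx=\int_{\bbT^n}\langle a(x),P^*v(x)\rangle_{\mathbb{C}^d}\,dx .
\)
Boundedness is immediate from $\|Pa(x)\|\le\|P\|_{\op}\|a(x)\|$. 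In the same way, multiplication by $W_k$ is bounded with adjoint given by multiplication by $W_k^*$.

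\textbf{Step 2: the Fourier multiplier part.} Write $K_k:=\Finve M_{R_k}\F$, where $(M_{R_k}\hat v)(\xi)=R_k(\xi)\hat v(\xi)$ on $\ell^2(\mathbb{Z}^n;\mathbb{C}^m)$. Since $\sup_\xi\|R_k(\xi)\|_{\op}<\infty$, $M_{R_k}$ is bounded with norm at most this supremum. The same pointwise-in-$\xi$ computation as in Step 1 shows that $M_{R_k}^*=M_{R_k^*}$, where $R_k^*(\xi):=R_k(\xi)^*$. By Theorem~\ref{uf}, $\F$ is unitary with $\Finve=\F^*$. Hence $K_k$ is bounded and
\(
K_k^*=\F^*M_{R_k}^*(\Finve)^*=\Finve M_{R_k^*}\F .
\)
Explicitly,
\(
\inner{K_kv}{w}=\inner{M_{R_k}\F v}{\F w}_{\ell^2}=\inner{\F v}{M_{R_k^*}\F w}_{\ell^2}=\inner{v}{\Finve M_{R_k^*}\F w}.
\)
Therefore $T_k=W_k+K_k$ is bounded and
\(
T_k^*=W_k^*+\Finve M_{R_k^*}\F .
\)

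\textbf{Step 3: use the pairing.} By the hypotheses $W_k^*=W_{L-1-k}$ and $R_k(\xi)^*=R_{L-1-k}(\xi)$, Step 2 gives $T_k^*=T_{L-1-k}$. When $L$ is odd, the middle index pairs with itself and is therefore self-adjoint; this case needs no separate treatment. Using $(AB)^*=B^*A^*$ for bounded operators,
\(
G^*=\mathcal{P}^*\,T_0^*T_1^*\cdots T_{L-1}^*\,\mathcal{P}=\mathcal{P}^*\,T_{L-1}T_{L-2}\cdots T_0\,\mathcal{P}=G .
\)
Thus $G$ is bounded, since its norm is at most $\|P\|_{\op}^2\prod_k\|T_k\|$, and it satisfies $\inner{G(a)}{b}_{\Hd}=\inner{a}{G(b)}_{\Hd}$.

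The only delicate point is Step 2. There one must use that the Fourier transform is unitary, so that $(\Finve)^*=\F$ and $\F^*=\Finve$ hold as stated in Theorem~\ref{uf}, and that the multiplier is bounded on $\ell^2$ so that all adjoints exist as bounded operators. One must also track the reversal of order carefully in Step 3. That reversal is exactly where the adjoint pair symmetry $k\leftrightarrow L-1-k$ is used, and the pairing is needed because the individual $T_k$ are not assumed self-adjoint.
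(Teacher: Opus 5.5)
Your proposal is correct and follows essentially the same route as the paper's proof. Both split $T_k$ into the pointwise matrix part and the Fourier multiplier $\Finve M_{R_k}\F$, and both use unitarity of $\F$ to get $T_k^*=T_{L-1-k}$. Both then factor $G=\mathcal{P}^*T_{L-1}\cdots T_0\mathcal{P}$ and conclude $G^*=G$ by reversing the product.
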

\subsection{Gradient Neural Functional}
The purpose of this subsection is to construct nonlinear operators whose
Fréchet derivatives are self-adjoint.
Such operators play a central role in symplectic neural architectures,
since Hamiltonian vector fields are generated as symplectic gradients
of scalar energy functionals.
In particular, if a nonlinear functional admits a gradient structure,
then its linearization automatically yields a self-adjoint operator,
which can be implemented using the SAFNOs introduced above.
Let $\mathbb{H}$ be a real Hilbert space with inner product $\inner{\cdot}{\cdot}_\mathbb{H}$.
Let functional
\(
\mathcal{E}:\mathbb{H}\to\mathbb{R}
\)
be (Fréchet) differentiable at $u\in\mathbb{H}$.
By the Riesz representation theorem, there exists a unique element
$\nabla\mathcal{E}(u)\in\mathbb{H}$ satisfying 
\(
D\mathcal{E}(u)[h]
=
\inner{\nabla\mathcal{E}(u)}{h}_\mathbb{H}
\quad
\forall h\in\mathbb{H}.
\)
The mapping
\(
\nabla\mathcal{E}:\mathbb{H}\to\mathbb{H}
\)
is called the \emph{gradient operator} associated with $\mathcal{E}$.\\
\textbf{Self-adjointness of the Jacobian.}
Assume that $\mathcal{E}\in C^2(\mathbb{H};\mathbb{R})$.
Then the Fréchet derivative of the gradient,
\(
D(\nabla\mathcal{E})(u):\mathbb{H}\to\mathbb{H},
\)
satisfies
\(
\inner{D(\nabla\mathcal{E})(u)h}{k}_\mathbb{H}
=
\inner{h}{D(\nabla\mathcal{E})(u)k}_\mathbb{H}
\quad
\forall h,k\in\mathbb{H},
\)
i.e. the Jacobian of $\nabla\mathcal{E}$ is a self-adjoint operator.
This identity follows from symmetry of second derivatives:
\(
D^2\mathcal{E}(u)[h,k]
=
D^2\mathcal{E}(u)[k,h].
\)
Hence, gradient mappings naturally induce self-adjoint linearizations.\\
\textbf{Neural parameterization of energy functionals.}
Let $\Hd=L^2(\mathbb{T}^n;\mathbb{R}^d)$.
We define a \emph{gradient neural functional} by specifying a scalar-valued energy
$
\mathcal{E}_\theta:\Hd\to\mathbb{R},
$
parameterized by trainable parameters $\theta$,
and setting the nonlinear operator
\(
\mathcal{G}_\theta := \nabla \mathcal{E}_\theta : \Hd \to \Hd.
\)
In this work, we consider energies of the form
\(
\mathcal{E}_\theta(u)
=
\int_{\mathbb{T}^n} \Phi_\theta\big( (Ku)(x) \big)\,dx,
\)
where
 $K:\Hd\to\Hm$ is a bounded linear operator, implemented as a SAFNO
and $\Phi_\theta:\mathbb{R}^m\to\mathbb{R}$ is a smooth scalar-valued neural network
      applied pointwise.
The gradient is then given by
\(
\nabla\mathcal{E}_\theta(u)
=
K^*\big( \nabla \Phi_\theta(Ku) \big),
\)
where $\nabla\Phi_\theta$ is the Euclidean gradient of $\Phi_\theta$.\\
\textbf{Self-adjoint linearization.}
Let $u\in\Hd$ and $h\in\Hd$.
The Fréchet derivative of $\mathcal{G}_\theta$ at $u$ is
\(
D\mathcal{G}_\theta(u)h
=
K^* \big( D^2\Phi_\theta(Ku)\, (K h) \big),
\)
where $D^2\Phi_\theta$ denotes the Hessian matrix of $\Phi_\theta$.
Since $D^2\Phi_\theta(Ku)$ is symmetric pointwise and $K^*$ is the adjoint of $K$,
it follows that
\(
D\mathcal{G}_\theta(u)
=
K^*\, M_{H_\theta(u)}\, K
\)
is a self-adjoint operator on $\Hd$,
where $M_{H_\theta(u)}$ denotes multiplication by the symmetric matrix-valued function
$H_\theta(u)(x)=D^2\Phi_\theta((Ku)(x))$.

\begin{theorem}[Gradient formula and self-adjoint linearization]\label{nsafno}
Let $\Hd=L^2(\mathbb{T}^n;\mathbb{R}^d)$ and $\Hm=L^2(\mathbb{T}^n;\mathbb{R}^m)$, and let $K:\Hd\to\Hm$ be a bounded linear operator with adjoint $K^*$. Let $\Phi_\theta:\mathbb{R}^m\to\mathbb{R}$ be $C^2$ with $\sup_{z}\|D^2\Phi_\theta(z)\|_{\mathrm{op}}<\infty$. Define energy functional 
\(
\mathcal{E}_\theta(u)=\int_{\mathbb{T}^n}\Phi_\theta\big((Ku)(x)\big)\,dx.
\)
Then $\mathcal{E}_\theta$ is Fr\'echet differentiable with
\(
\nabla\mathcal{E}_\theta(u)=K^*\big(\nabla\Phi_\theta(Ku)\big),
\)
and $\mathcal{G}_\theta:=\nabla\mathcal{E}_\theta$ is Fr\'echet differentiable with
\(
D\mathcal{G}_\theta(u)h = K^*\!\left(D^2\Phi_\theta(Ku)\,(Kh)\right), h\in\Hd,
\)
equivalently $D\mathcal{G}_\theta(u)=K^* M_{H_\theta(u)} K$, where $H_\theta(u)(x)=D^2\Phi_\theta((Ku)(x))$; in particular, $D\mathcal{G}_\theta(u)$ is a bounded self-adjoint operator on $\Hd$ for every $u\in\Hd$.
\end{theorem}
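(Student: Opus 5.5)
The plan is to reduce everything to pointwise Taylor expansions of $\Phi_\theta$ on $\mathbb{R}^m$, transported to $L^2$ through the bounded operator $K$. Throughout, write $C:=\sup_z\|D^2\Phi_\theta(z)\|_{\op}$ and $z(x):=(Ku)(x)$.

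\textbf{Well-definedness.} First I check that $\mathcal{E}_\theta$ and the candidate gradient make sense. By Taylor's theorem,
\[
|\Phi_\theta(z)|\le |\Phi_\theta(0)|+|\nabla\Phi_\theta(0)|\,|z|+\tfrac{C}{2}|z|^2 ,
\qquad
|\nabla\Phi_\theta(z)-\nabla\Phi_\theta(w)|\le C|z-w| .
\]
Since $dx$ is a probability measure, $Ku\in\Hm$ gives $\Phi_\theta(Ku)\in L^1$, so $\mathcal{E}_\theta$ is finite. The Nemytskii map $N(v):=\nabla\Phi_\theta(v(\cdot))$ sends $\Hm$ to $\Hm$ and is $C$-Lipschitz.

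\textbf{First derivative.} For $h\in\Hd$ I use the integral form of Taylor's remainder pointwise:
\[
\Phi_\theta(z+Kh)-\Phi_\theta(z)-\nabla\Phi_\theta(z)\cdot Kh=\int_0^1(1-s)\,Kh^\top D^2\Phi_\theta(z+sKh)\,Kh\,ds .
\]
This remainder is bounded by $\tfrac C2|Kh(x)|^2$. Integrating gives
\[
\bigl|\mathcal{E}_\theta(u+h)-\mathcal{E}_\theta(u)-\inner{N(Ku)}{Kh}_{\Hm}\bigr|\le \tfrac C2\|K\|^2\|h\|^2=o(\|h\|).
\]
Hence $D\mathcal{E}_\theta(u)[h]=\inner{K^*N(Ku)}{h}_{\Hd}$, and Riesz representation yields $\nabla\mathcal{E}_\theta(u)=K^*(\nabla\Phi_\theta(Ku))$.

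\textbf{Second derivative.} I write
\[
\mathcal{G}_\theta(u+h)-\mathcal{G}_\theta(u)-K^*M_{H_\theta(u)}Kh=K^*\bigl(r_h\bigr),
\qquad
r_h(x)=\int_0^1\bigl[D^2\Phi_\theta(z+sKh)-D^2\Phi_\theta(z)\bigr]\,ds\;Kh(x).
\]
It suffices to show $\|r_h\|_{\Hm}=o(\|h\|)$. This is the step I expect to be the main obstacle. The pointwise factor in brackets is bounded by $2C$ and tends to $0$ wherever $Kh(x)\to 0$, by continuity of $D^2\Phi_\theta$.
\begin{itemize}[leftmargin=*]
\item \emph{Along a fixed direction.} Dominated convergence gives the estimate along $h=t h_0$ with $t\to 0$. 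This proves Gateaux differentiability with derivative $K^*M_{H_\theta(u)}K$.
\item \emph{Uniformly in $h$.} Upgrading to Fréchet differentiability needs a uniform estimate. A general Nemytskii operator $L^2\to L^2$ is not Fréchet differentiable unless it is affine, so some extra input is needed. I would first try the case where $K$ maps $\Hd$ boundedly into $L^\infty$ (e.g.\ a SAFNO whose symbols $R_k$ decay fast enough) and $D^2\Phi_\theta$ is uniformly continuous. Then $\|Kh\|_\infty\lesssim\|h\|$ makes the bracket uniformly small, giving $\|r_h\|\le \varepsilon(\|h\|)\,C'\|h\|$.
\item \emph{Fallback.} Otherwise I would settle for Gateaux/Hadamard differentiability along $\Hd$. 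I would also note that the identification of the linearization, which is what the symplecticity argument uses, is unaffected.
\end{itemize}

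\textbf{Self-adjointness and boundedness.} Once $D\mathcal{G}_\theta(u)=K^*M_{H_\theta(u)}K$ is identified, these are immediate. The multiplication operator $M_{H_\theta(u)}$ has operator norm at most $C$. It is self-adjoint on $\Hm$ because $H_\theta(u)(x)$ is a real symmetric matrix for every $x$:
\[
\inner{M_Hf}{g}_{\Hm}=\int_{\mathbb{T}^n} f^\top H g\,dx=\inner{f}{M_Hg}_{\Hm}.
\]
Therefore $\inner{K^*M_HKh}{k}_{\Hd}=\inner{M_HKh}{Kk}_{\Hm}=\inner{h}{K^*M_HKk}_{\Hd}$, and $\|D\mathcal{G}_\theta(u)\|\le C\|K\|^2$.
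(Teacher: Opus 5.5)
Your treatment of $\nabla\mathcal{E}_\theta$ matches the paper's Steps 1, 2 and 4: you bound the pointwise integral Taylor remainder by $\tfrac{C}{2}\norm{K}^2\norm{h}^2$, and you get boundedness and self-adjointness of $K^*M_{H_\theta(u)}K$ the same way. The two proofs part ways on the Fr\'echet differentiability of $\mathcal{G}_\theta$, and there your doubt is justified. The paper's Step 3 asserts $\norm{F(v+w)-F(v)-M_{D^2\Phi_\theta(v)}w}/\norm{w}\to 0$ ``by dominated convergence''. After dividing by $\norm{w}$, however, the integrand carries the factor $\abs{w(x)}^2/\norm{w}^2$. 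These normalized densities can concentrate on small sets, so they have no common integrable majorant, and dominated convergence only handles a fixed direction, as you say.

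The claim is in fact false as stated. Take $d=m=1$, $K=\mathrm{Id}$ and $\Phi_\theta(z)=\log\cosh z$, which is smooth with $0<\Phi_\theta''\le 1$. Then $\mathcal{G}_\theta(u)=\tanh\circ u$, and its G\^ateaux derivative at $0$ is the identity. For $h=c\,\mathbf{1}_E$ with $c\neq 0$ fixed,
\[
\frac{\norm{\mathcal{G}_\theta(h)-\mathcal{G}_\theta(0)-h}_{\Hd}}{\norm{h}_{\Hd}}=\frac{\abs{\tanh c-c}}{\abs{c}},
\]
which does not tend to $0$ as $\abs{E}\to 0$. So the missing uniform estimate in your plan is a defect of the statement and of the paper's proof, not of your argument. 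What you do establish is the correct repair. You get G\^ateaux differentiability with derivative $K^*M_{H_\theta(u)}K$, which is automatically Hadamard because $\mathcal{G}_\theta$ is $C\norm{K}^2$-Lipschitz. You get Fr\'echet differentiability when $K$ maps $\Hd$ boundedly into $L^\infty$. The self-adjointness conclusion holds for this derivative in every case.

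Three refinements follow.
\begin{itemize}
\item Under the $L^\infty$ hypothesis you do not need uniform continuity of $D^2\Phi_\theta$. For $\norm{h}\le 1$, all values $(Ku)(x)+s(Kh)(x)$ lie in a fixed compact ball, and continuity suffices there.
\item Your parenthetical SAFNO example needs an extra condition. Take $\mathcal{P}^*T_{L-1}\cdots T_0\mathcal{P}$ with square-summable symbols $R_k$. Every term of the expanded product that contains a spectral factor maps $L^2$ into $L^\infty$. The purely pointwise term, however, is multiplication by the constant matrix $P^*W_{L-1}\cdots W_0P$, and it maps into $L^\infty$ only if that matrix vanishes.
\item With only G\^ateaux or Hadamard differentiability, the shear blocks still satisfy $(D\Phi)^*JD\Phi=J$ pointwise. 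They need not be $C^1$ in the Fr\'echet sense that the paper's definition of a symplectomorphism requires, so that downstream claim must be weakened accordingly.
\end{itemize}
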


\subsection{The Architecture of Symplectic Neural Operator}
\label{subsec:arch-sympno}
We construct a learnable fixed time flow map
as a composition of shear-type symplectic blocks parameterized by the gradient neural functionals introduced in the previous subsection
.\\
\textbf{Gradient shear blocks.}
Let $\mathcal{E}^{q}_{\theta}:\Hd\to\mathbb{R}$ and $\mathcal{E}^{p}_{\vartheta}:\Hd\to\mathbb{R}$
be neural energy functionals of the form constructed above, and define the associated
gradient operators
\(
F^{q}_{\theta}:=\nabla \mathcal{E}^{q}_{\theta}:\Hd\to\Hd,
F^{p}_{\vartheta}:=\nabla \mathcal{E}^{p}_{\vartheta}:\Hd\to\Hd.
\)
We define the corresponding shear maps on $\mathbb{P}$ by
\(
\mathcal{T}^{\mathrm{up}}_{\theta}(q,p)
:=
\big(q+F^{q}_{\theta}(p),\,p\big),
\mathcal{T}^{\mathrm{low}}_{\vartheta}(q,p)
:=
\big(q,\,p+F^{p}_{\vartheta}(q)\big).
\)
Each block is symplectic, and compositions of these blocks remain symplectic.

\begin{definition}[Symplectic Neural Operator]
\label{def:sympno}
Fix an integer $L\ge 1$ and a parameter collection
\(
\Theta := (\theta_1,\vartheta_1,\dots,\theta_L,\vartheta_L).
\)
The \emph{Symplectic Neural Operator} is the map $\mathrm{SNO}_\Theta:\mathbb{P}\to\mathbb{P}$
defined by the alternating composition
\(
\mathrm{SNO}_\Theta
:=
\mathcal{T}^{\mathrm{up}}_{\theta_L}\circ \mathcal{T}^{\mathrm{low}}_{\vartheta_L}
\circ \cdots \circ
\mathcal{T}^{\mathrm{up}}_{\theta_1}\circ \mathcal{T}^{\mathrm{low}}_{\vartheta_1}.
\)
The specific ordering of upper and lower shear blocks is a modeling choice.
\end{definition}
\subsection{Long-time Stability of Symplectic Neural Operators}
We next clarify the sense in which the proposed SNO improves long-time stability.
A Hamiltonian flow preserves two different structures: the Hamiltonian energy and the symplectic
form. The exact flow \(\Phi^\tau\) satisfies
\(
    H(\Phi^\tau(u)) = H(u)
\)
and    
\(
    (\Phi^\tau)^*\omega = \omega.
\)
An SNO is designed to preserve the second property exactly. It does not, in general, preserve the
Hamiltonian \(H\) exactly.  The advantage of SNO is that
the learned error is constrained to be symplectic. Under a Hamiltonian perturbation
representation, this yields near-conservation of a modified Hamiltonian.
\begin{theorem}[Modified-Hamiltonian stability of SNO]\label{thm:sno_modified_hamiltonian}
Let $\mathcal{U}\subset\bbP$ be open and let $\mathcal{K}\Subset\mathcal{U}$ be a rollout
region. Assume that
$H\in C^{m+2}(\mathcal{U};\bbR)$, and that the exact flow $\Phi_H^t$ is well-defined on $\mathcal{U}$ for
$0\le t\le \tau$, and that the $\varepsilon$-error SNO learned map $S_\varepsilon$ admits the Hamiltonian
perturbation representation
\(
    S_\varepsilon=\Phi_{K_\varepsilon}^1\circ \Phi_H^\tau,
    K_\varepsilon=\sum_{j=1}^m \varepsilon^j K_j+\mathcal{O}_{C^{m+1}}(\varepsilon^{m+1}),
\)
where $K_j\in C^{m+2-j}(\mathcal{U};\bbR)$. Suppose moreover that the BCH logarithm
$\log(\exp(\tau\mathcal{L}_H)\exp(\mathcal{L}_{K_\varepsilon}))$ admits a Hamiltonian expansion up to order $m$
with remainder $\mathcal{O}(\varepsilon^{m+1})$ uniformly on $\mathcal{K}$, and that the required
Hamiltonian vector fields and Lie derivatives are bounded on $\mathcal{K}$. If
$u_{n+1}=S_\varepsilon(u_n)$, $u_0\in\mathcal{K}$, and $u_n\in\mathcal{K}$ for $0\le n\le N$, then there
exists a modified Hamiltonian
\(
    \widetilde H_{\varepsilon,m}
    =H+\varepsilon H_1+\varepsilon^2H_2+\cdots+\varepsilon^mH_m
\)
such that, for constants independent of $n$ and sufficiently small $\varepsilon$,
\(
    \|\widetilde H_{\varepsilon,m}-H\|_{C^0(\mathcal{K})}\le C\varepsilon,
    |\widetilde H_{\varepsilon,m}(u_n)-\widetilde H_{\varepsilon,m}(u_0)|
    \le Cn\varepsilon^{m+1}.
\)
Consequently,
\(
    |H(u_n)-H(u_0)|\le C\varepsilon+Cn\varepsilon^{m+1},
\)
and in particular $|H(u_n)-H(u_0)|\le C\varepsilon$ for $0\le n\le c\varepsilon^{-m}$.
\end{theorem}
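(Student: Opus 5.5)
The plan is a backward-error-analysis argument in which the exact flow is the unperturbed part and the learning error $K_\varepsilon$ plays the role of the small parameter. First, write the one-step map as an exponential of Lie operators: $S_\varepsilon=\Phi^1_{K_\varepsilon}\circ\Phi^\tau_H$. Pullback by the composition acts on observables by $\exp(\tau\mathcal{L}_H)\exp(\mathcal{L}_{K_\varepsilon})$, with the order of factors fixed by the pullback convention. By the BCH hypothesis, the logarithm of this product equals $\mathcal{L}_{G_\varepsilon}$ up to $\mathcal{O}(\varepsilon^{m+1})$. Here $G_\varepsilon=\tau H+\sum_{j=1}^m\varepsilon^j G_j$ is a Hamiltonian built from iterated Poisson brackets of $H$ and the $K_j$.

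Because $\tau$ is fixed rather than small, the BCH series in $\tau\mathcal{L}_H$ does not converge naively. I would therefore expand only in $\varepsilon$ and take the $\varepsilon$-linear term in its exact closed form, $\frac{\operatorname{ad}_{\tau\mathcal{L}_H}}{1-e^{-\operatorname{ad}_{\tau\mathcal{L}_H}}}\mathcal{L}_{K_1}$. This form is what the hypothesis "admits a Hamiltonian expansion up to order $m$" is meant to guarantee. I then set $\widetilde H_{\varepsilon,m}:=G_\varepsilon/\tau=H+\sum_{j=1}^m\varepsilon^jH_j$ with $H_j:=G_j/\tau$. The regularity bookkeeping ($K_j\in C^{m+2-j}$, $H\in C^{m+2}$) ensures that each $H_j$ is at least $C^1$ and that the Hamiltonian vector fields involved are bounded on $\mathcal{K}$. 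This yields the first bound, $\|\widetilde H_{\varepsilon,m}-H\|_{C^0(\mathcal{K})}\le C\varepsilon$.

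Second, I establish the one-step near-conservation $|\widetilde H_{\varepsilon,m}(S_\varepsilon(u))-\widetilde H_{\varepsilon,m}(u)|\le C\varepsilon^{m+1}$ for $u\in\mathcal{K}$. The BCH statement says that $S_\varepsilon$ agrees with the time-one flow of $G_\varepsilon$, i.e.\ $\Phi^1_{G_\varepsilon}=\Phi^\tau_{\widetilde H_{\varepsilon,m}}$, up to an $\mathcal{O}(\varepsilon^{m+1})$ error at the level of Lie operators applied to observables. Applying this to the observable $\widetilde H_{\varepsilon,m}$ itself gives $\widetilde H_{\varepsilon,m}\circ S_\varepsilon=\exp(\mathcal{L}_{G_\varepsilon})\widetilde H_{\varepsilon,m}+\mathcal{O}(\varepsilon^{m+1})$. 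Since $\mathcal{L}_{G_\varepsilon}\widetilde H_{\varepsilon,m}=\{\widetilde H_{\varepsilon,m},G_\varepsilon\}=0$, the exponential acts trivially, and the identity follows. Equivalently, the modified flow conserves $\widetilde H_{\varepsilon,m}$ exactly by the energy conservation proposition of Section 2, and the $\mathcal{O}(\varepsilon^{m+1})$ defect is converted into a value defect using the boundedness of Lie derivatives on $\mathcal{K}$. Symplecticity of $S_\varepsilon$, from the shear structure and Proposition~\ref{prop:composition-sympl}, is exactly what makes the modified object Hamiltonian rather than a general vector field. This is where the SNO structure is used.

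Third, I telescope. Since $u_n\in\mathcal{K}$ for $n\le N$, the one-step bound applies at every step, so $|\widetilde H_{\varepsilon,m}(u_n)-\widetilde H_{\varepsilon,m}(u_0)|\le\sum_{k<n}C\varepsilon^{m+1}=Cn\varepsilon^{m+1}$. Combining this with the $C^0$ closeness via the triangle inequality gives $|H(u_n)-H(u_0)|\le 2C\varepsilon+Cn\varepsilon^{m+1}$. For $n\le c\varepsilon^{-m}$ the second term is at most $Cc\,\varepsilon$.

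The main obstacle is the second step: turning the formal operator identity into a pointwise bound with a constant that does not depend on $n$. I would keep this local by never iterating the BCH remainder, using it only once per step at points of $\mathcal{K}$, so that uniformity on $\mathcal{K}$ and the assumption $u_n\in\mathcal{K}$ suffice. In infinite dimensions the flows may be defined only on $\mathcal{U}$. I would therefore use $\mathcal{K}\Subset\mathcal{U}$ to guarantee that the short interpolating flows used in the remainder estimate stay in $\mathcal{U}$ for $\varepsilon$ small.
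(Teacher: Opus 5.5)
Your proposal is correct and follows essentially the same route as the paper. Like the paper, you pull back $S_\varepsilon$ to the Lie-operator product and use the BCH hypothesis to obtain $\widetilde H_{\varepsilon,m}$; the fact that nested commutators of Hamiltonian Lie derivatives are again Hamiltonian is what the paper records as $[\mathcal{L}_F,\mathcal{L}_G]=\mathcal{L}_{\{G,F\}}$. You then apply the one-step identity to the observable $\widetilde H_{\varepsilon,m}$ itself, use its exact conservation under its own flow, telescope, and finish with the triangle inequality. Your extra remarks (the closed-form $\frac{\operatorname{ad}}{1-e^{-\operatorname{ad}}}$ linear term for non-small $\tau$, and using the BCH remainder only once per step on $\mathcal{K}$) refine points that the paper simply absorbs into its hypotheses.
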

\textbf{Advantage over classical symplectic integrators.}
Classical symplectic integrators, such as splitting methods, leapfrog schemes, and variational
integrators, provide powerful tools when the Hamiltonian functional and the associated variational
derivatives are explicitly known. Their main objective is to approximate the Hamiltonian vector
field by repeatedly applying small time steps while preserving a discrete symplectic structure.
In contrast, our setting is data-driven: the Hamiltonian functional, the exact vector field, or the
appropriate structure-preserving discretization may not be available. The SNO therefore learns the
finite-time flow map directly,
\(
    \Phi^\tau_H:\mathbb{P}\to\mathbb{P},
\)
from input--output state pairs, while enforcing symplecticity at the operator level.

This distinction leads to an important practical advantage. The accuracy of a classical integrator is
controlled by the step size and the order of the numerical method, whereas the accuracy of an SNO is
controlled by the learned flow-map error
\(
    \varepsilon
    =
    \sup_{z\in\mathcal{K}}
    \|S_\theta(z)-\Phi^\tau_H(z)\|.
\)
Thus, the learned time interval \(\tau\) need not be interpreted as a small numerical time step.
Even for a large sampling interval, the model can in principle achieve small error if the training data
and model capacity are sufficient. In this sense, SNO acts as a structure-preserving surrogate for the
exact time-\(\tau\) flow rather than as a conventional local time-stepping scheme.

Moreover, many structure-preserving integrators for Hamiltonian PDEs require repeated evaluations
of variational derivatives, careful treatment of boundary conditions, and in some cases implicit
solves at every time step. By contrast, once trained, an SNO computes the solution via a single forward pass of the learned symplectic map. This makes it attractive as a fast surrogate for repeated long-time
rollouts, inverse problems, and uncertainty quantification, where the same dynamical system must be
evaluated many times.

\section{Experimental Result}
We evaluate the proposed shear-type neural operator models on four benchmark
Hamiltonian PDEs: simple wave, electromagnetic wave, Schr\"odinger, and
Klein--Gordon.
\begin{figure}[htbp]
  \centering
  \includegraphics[width=\linewidth]{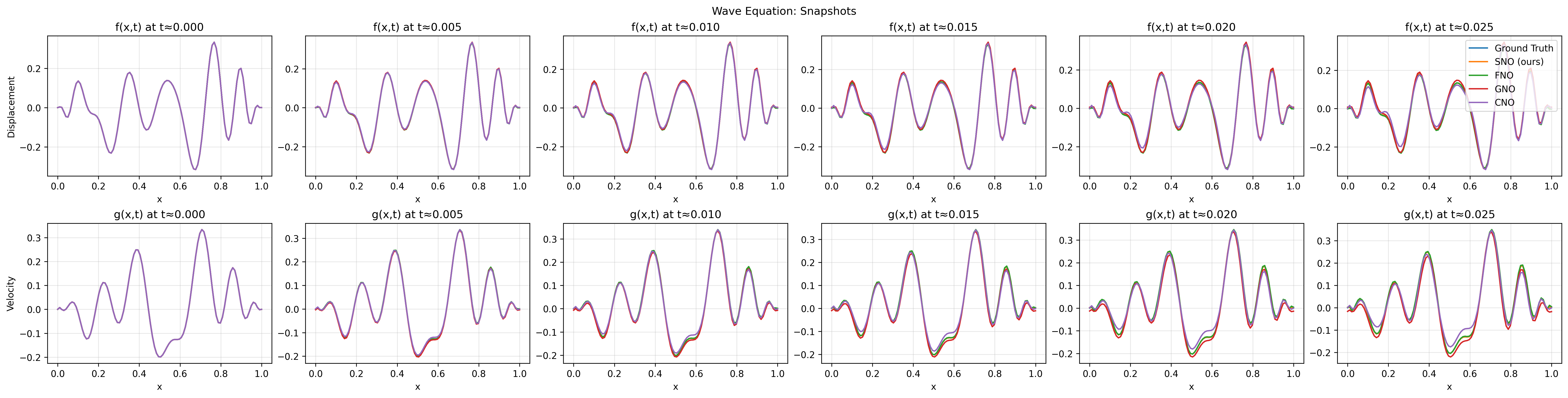}
  \caption{Wave equation: snapshots of displacement 
  and velocity at first 25 time-steps.}
  \label{fig:sw-sno-snapshots25}
\end{figure}
\begin{figure}[htbp]
  \centering
  \includegraphics[width=\linewidth]{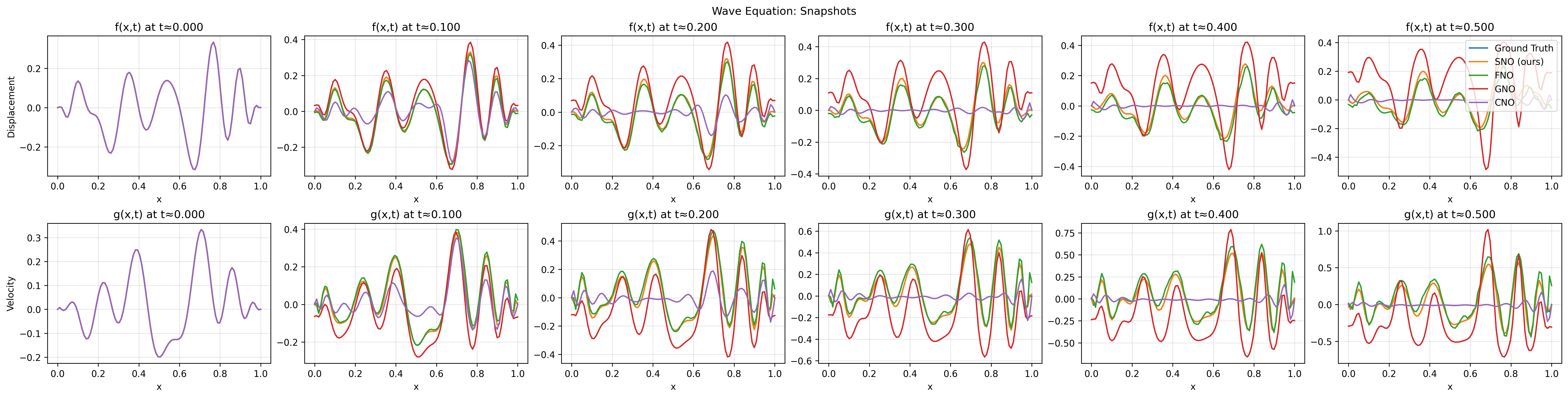}
  \caption{Wave equation: snapshots of displacement  and velocity  at first 500 time-steps.}
  \label{fig:sw-sno-snapshots}
\end{figure}

\begin{figure}[htbp]
  \centering
  \includegraphics[width=\linewidth]{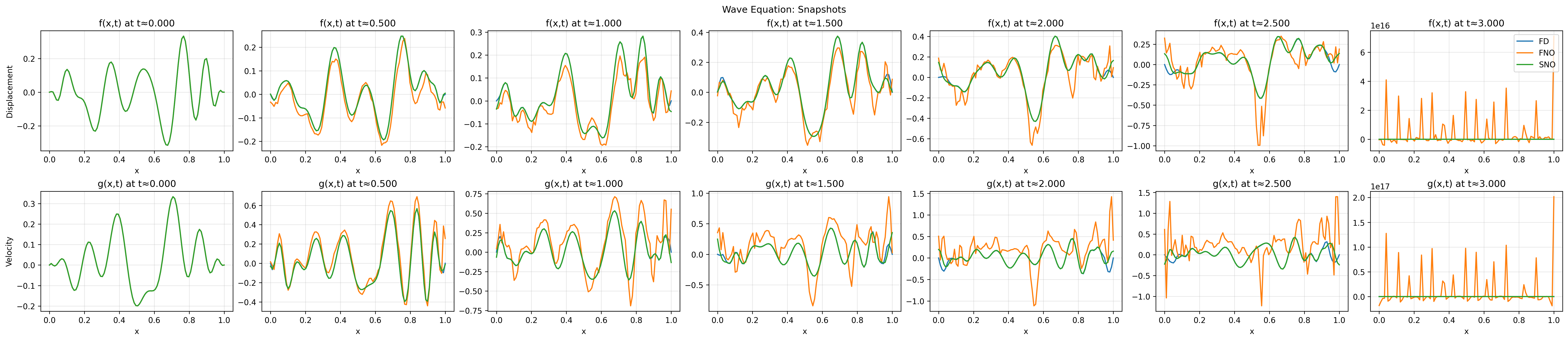}
  \caption{Wave equation: snapshots of displacement
  and velocity at first 3000 time-steps.}
  \label{fig:sw-sno-snapshots3000}
\end{figure}

\begin{figure}[htbp]
  \centering
  \includegraphics[width=0.58\linewidth]{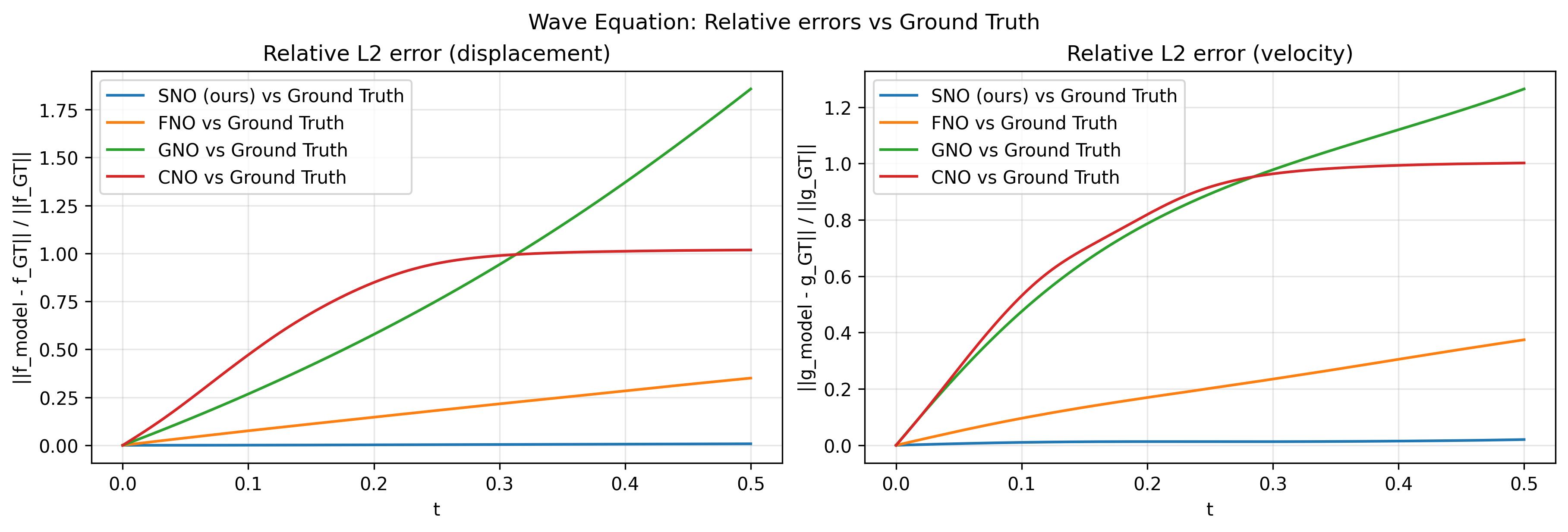}
  \includegraphics[width=0.3\linewidth]{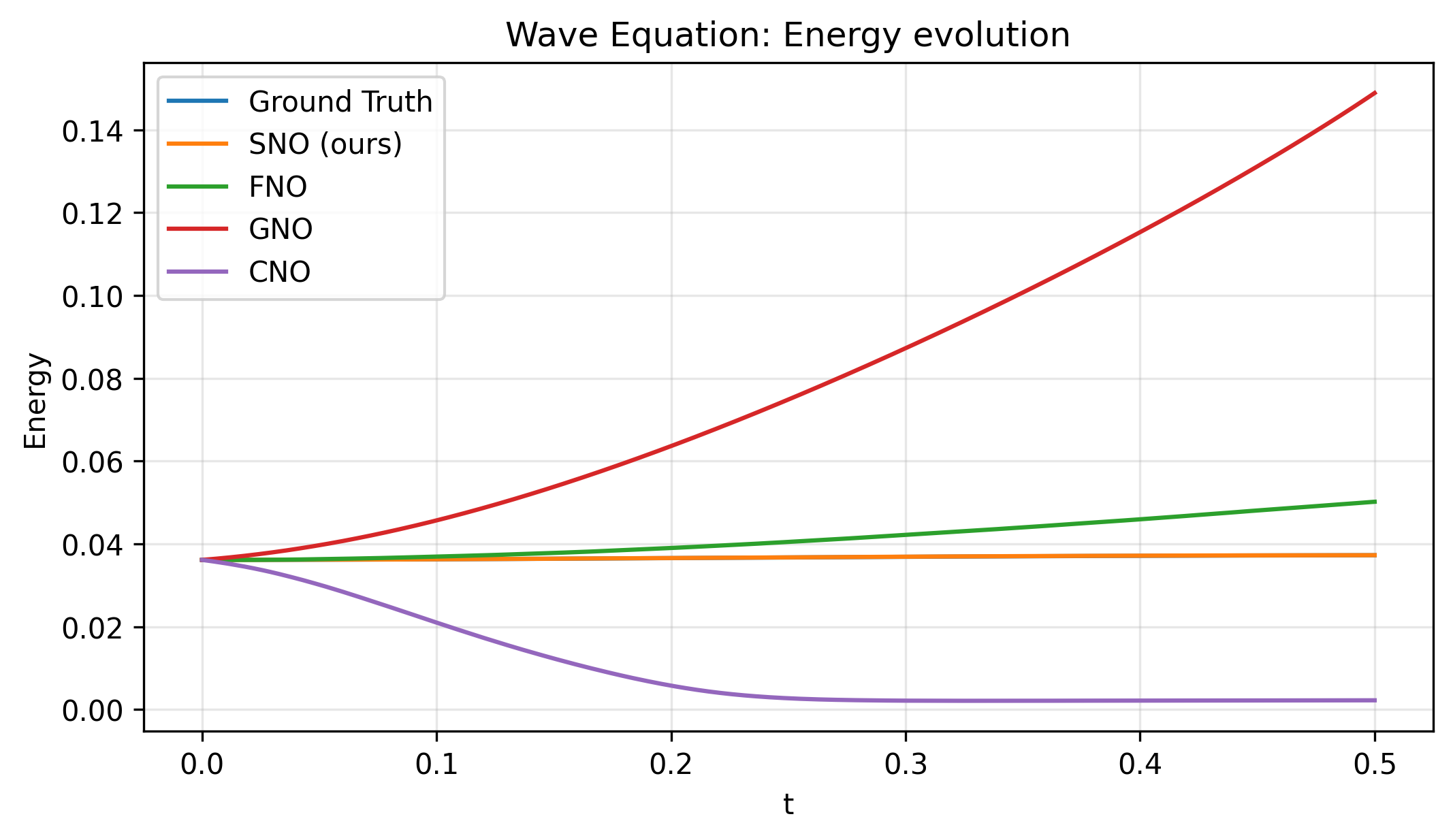}
  \caption{Wave equation: relative $L^2$ rollout errors against the ground truth and and energy evolution.}
  \label{fig:sw-sno-relerr}
\end{figure}

\begin{figure}[htbp]
  \centering
  \includegraphics[width=\linewidth]{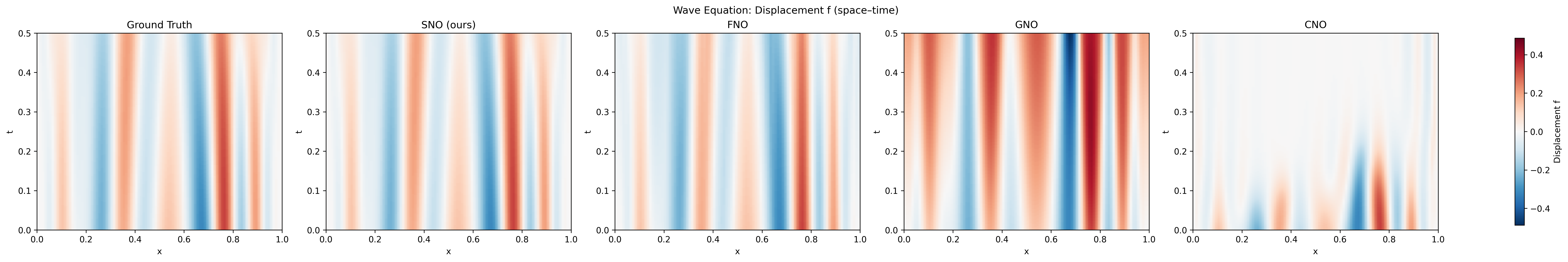}
  \includegraphics[width=\linewidth]{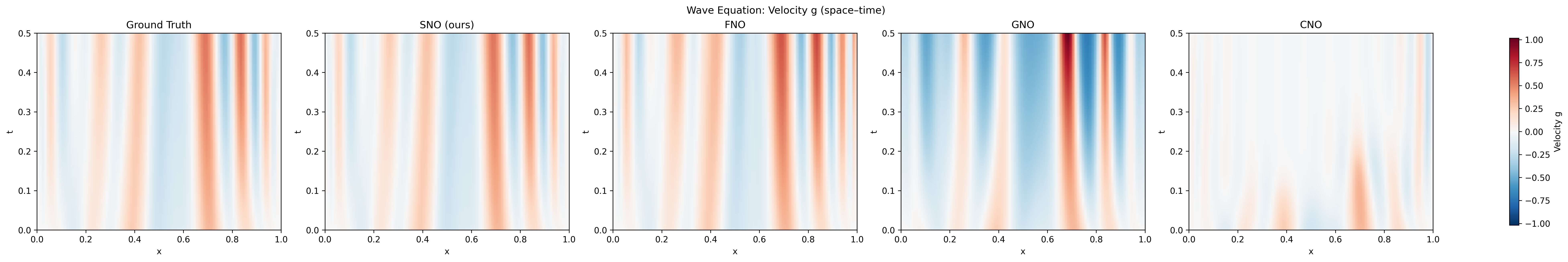}
  \caption{Wave equation: space--time displacement field and velocity field.}
  \label{fig:sw-sno-spacetime}
\end{figure}

\textbf{Experiment on wave equation:} In the first experiment, we focus on learning and predicting the solutions of a wave equation on the spatial interval $I=[0,1]$ with a wave speed of 
$c=0.05$. The Hamiltonian density of the system is $\mathcal{H}=\frac{1}{2}(p^2+c^2q_x^2)$ where $q$ and $p$ stand for displacement and velocity respectively. 
Figures~\ref{fig:sw-sno-snapshots25}, 
\ref{fig:sw-sno-snapshots},
and 
\ref{fig:sw-sno-spacetime}
compare the predicted solutions of the wave equation 
produced by the proposed SNO with several baseline neural operators.
At short times (within the first 10–20 time steps), 
all models produce visually indistinguishable predictions that closely match the ground truth. 
This behavior is expected, 
as short-time evolution is mainly governed by the local accuracy of the learned one-step map, 
for which standard neural operators are already sufficient.
In contrast, substantial differences emerge in the long-time regime. 
As the rollout horizon increases to several hundreds of time steps, 
baseline models such as FNO, GNO, and CNO exhibit a gradual accumulation of phase and amplitude errors. 
These errors manifest as spurious oscillations, amplitude drift, 
and a loss of coherence in both the displacement and velocity fields.
By comparison, the proposed SNO maintains stable oscillatory behavior and 
preserves the qualitative structure of the solution over long times.
This improved long-time performance can be attributed to the symplectic structure 
embedded in the SNO architecture. 
By preserving the underlying Hamiltonian geometry of the wave equation, 
the SNO mitigates the systematic error accumulation that typically arises 
during long rollouts of non–structure-preserving models. 
As a result, while all models perform comparably in the short term, 
the SNO exhibits superior stability and fidelity in long-time prediction.
Among the baseline methods, FNO demonstrates the second-best long-time stability, 
as shown in Figure~\ref{fig:sw-sno-snapshots}. At 500 time steps, its prediction remains 
relatively close to both the SNO and the ground truth. 
However, by 3000 time steps, significant deviations become apparent, 
as illustrated in Figure~\ref{fig:sw-sno-snapshots3000}.\\
Figure~\ref{fig:sw-sno-relerr} reports the evolution of the Hamiltonian energy. 
While the baseline models exhibit a clear monotonic energy drift and eventual instability, 
the SNO maintains an energy profile that remains nearly constant and closely aligned with 
the ground truth throughout the entire rollout horizon.\\
This enhanced stability is further reflected in the relative $L^2$ rollout errors also 
shown in Figure~\ref{fig:sw-sno-relerr}. For both displacement and velocity, 
the errors of FNO, CNO, and GNO grow rapidly over time, whereas the SNO errors 
remain small and increase only mildly, indicating controlled and stable long-time behavior.
These results are consistent with the theoretical
analysis in previous section: symplectic structure does
not merely improve short-term accuracy, but controls the accumulation of error
over long rollouts.

\textbf{Further experimental results.}
Similar trends are observed for the electromagnetic wave equation, the
Schr\"odinger equation, and the nonlinear Klein--Gordon equation. Due to space
constraints, these additional experiments are reported in the appendix.
\section{Limitations and Future Work}
\label{sec:limitations}
The proposed SNO has a few limitations. First, because the architecture is built from compositions of shear-type symplectic operator layers, it requires longer  training time compared with general neural operators. Second, the present formulation applies directly to Hamiltonian systems modeled on strong canonical symplectic Hilbert phase spaces; extending SNOs to Hamiltonian systems on general symplectic manifolds equipped with weak symplectic forms, will require more sophisticated geometric constructions and additional functional-analytic theory. Finally, our modified-Hamiltonian stability result relies on some assumptions, including the Hamiltonian perturbation representation of the learned error map and the validity of a controlled BCH expansion. Although these assumptions are natural for symplectic learned maps close to the exact flow, it remains to clarify when they hold in general infinite-dimensional settings and when they may fail. Future work will address these issues by improving the efficiency of SNO training, extending the framework to broader symplectic phase spaces, and developing sharper theoretical conditions for modified-Hamiltonian stability.

\section{Conclusion}
We proposed the \emph{Symplectic Neural Operator} (SNO), a structure-preserving neural
operator architecture for learning infinite-dimensional Hamiltonian dynamics. The model
preserves symplectic structure by construction, and our theoretical analysis establishes a
modified-Hamiltonian stability result showing that, under suitable Hamiltonian perturbation
assumptions, the learned map nearly conserves a modified Hamiltonian over long rollouts.
Experiments on canonical Hamiltonian PDEs corroborate this theory: SNO achieves improved
long-time stability and energy behavior compared with non-structure-preserving neural
operators while retaining competitive short-time accuracy. These results highlight the
importance of geometric inductive biases in neural operator learning and suggest a principled
path toward reliable structure-preserving surrogate models for complex physical systems.

\section*{Acknowledgement}
Funding in direct support of this work: JST CREST Grant Number 
JPMJCR24Q5,
JSPS KAKENHI Grant Number 25K15148, JST ASPIRE JPMJAP2329, Horizon Europe, MSCA-SE project
101131557 (REMODEL),
Watanuki International Scholarship Foundation.

\bibliographystyle{plainnat}
\bibliography{ref}

\appendix
\section{Proof of the theorems}

\subsection{Proof of Proposition \ref{sno_copm}}
\begin{proof}(Proposition \ref{sno_copm})
Using the characterization $S^*JS=J$,
\[
(ST)^*J(ST)=T^*(S^*JS)T=T^*JT=J,
\]
so $ST$ is symplectic. Also,
\[
J = S^*JS \ \Longrightarrow\ (S^{-1})^*J(S^{-1})=J,
\]
so $S^{-1}$ is symplectic.
\end{proof}

\subsection{Proof of Proposition \ref{prop:linear-shear}}
\begin{proof}(Proposition \ref{prop:linear-shear})
Write $S_A$ in block form on $\bbH_q\oplus\bbH_p$:
\[
S_A = \begin{pmatrix} I & 0 \\ A & I \end{pmatrix},
\qquad
S_A^*=\begin{pmatrix} I & A \\ 0 & I \end{pmatrix}
\quad(\text{since }A^*=A),
\qquad
J=\begin{pmatrix} 0 & -I \\ I & 0 \end{pmatrix}.
\]
Compute
\[
JS_A=\begin{pmatrix} 0 & -I \\ I & 0 \end{pmatrix}\begin{pmatrix} I & 0 \\ A & I \end{pmatrix}
= \begin{pmatrix} -A & -I \\ I & 0 \end{pmatrix},
\]
and then
\[
S_A^*(JS_A)=
\begin{pmatrix} I & A \\ 0 & I \end{pmatrix}
\begin{pmatrix} -A & -I \\ I & 0 \end{pmatrix}
=
\begin{pmatrix} -A + A & -I \\ I & 0 \end{pmatrix}
=
\begin{pmatrix} 0 & -I \\ I & 0 \end{pmatrix}
=J.
\]
Hence $S_A^*JS_A=J$, so $S_A$ is symplectic. The proof for $T_B$ is analogous.
\end{proof}

\subsection{Proof of Proposition \ref{prop:nonlinear-shear}}
\begin{proof}(Proposition \ref{prop:nonlinear-shear})
Since $\omega$ is constant on $\bbP$, it suffices to show that the derivative is symplectic at each point.
Compute the Fr\'echet derivative:
\[
D\Phi_V(q,p)=
\begin{pmatrix}
I & 0\\
D(\nabla V)(q) & I
\end{pmatrix}.
\]
By assumption, $D(\nabla V)(q)$ is self-adjoint, so the calculation in Proposition
\ref{prop:linear-shear} applies pointwise with $A=D(\nabla V)(q)$, giving
\[
(D\Phi_V(q,p))^*\,J\,(D\Phi_V(q,p))=J
\quad\text{for all }(q,p).
\]
Hence $\Phi_V^*\omega=\omega$. The proof for $\Psi_W$ is analogous.
\end{proof}

\subsection{Proof of Theorem \ref{uf}}
\begin{proof}(Theorem \ref{uf})
Let $H:=L^2(\mathbb{T}^n;\mathbb{C}^m)$ and let $\ell^2:=\ell^2(\mathbb{Z}^n;\mathbb{C}^m)$.

\medskip
Step 1 (Parseval implies $\F$ is an isometry).

Write $v=(v_1,\dots,v_m)$ and define $\hat v(k):=(\widehat{v_1}(k),\dots,\widehat{v_m}(k))$,
where $\widehat{v_j}(k)=\int_{\mathbb{T}^n} v_j(x)e^{-2\pi i k\cdot x}\,dx$.
By the Parseval identity on $\mathbb{T}^n$ applied to each component $v_j$ and summing over $j$,
\[
\|v\|_{H}^2
=\sum_{j=1}^m \|v_j\|_{L^2(\mathbb{T}^n)}^2
=\sum_{j=1}^m \sum_{k\in\mathbb{Z}^n} |\widehat{v_j}(k)|^2
=\sum_{k\in\mathbb{Z}^n}\|\hat v(k)\|_{\mathbb{C}^m}^2
=\|\F v\|_{\ell^2}^2.
\]
Hence $\F$ preserves norms: $\|\F v\|_{\ell^2}=\|v\|_{H}$ for all $v\in H$.

More generally, applying Parseval to $v_j,w_j$ componentwise yields
\[
\inner{v}{w}_{H}
=\sum_{j=1}^m \inner{v_j}{w_j}_{L^2}
=\sum_{j=1}^m\sum_{k\in\mathbb{Z}^n}\overline{\widehat{v_j}(k)}\,\widehat{w_j}(k)
=\sum_{k\in\mathbb{Z}^n}\hat v(k)^*\hat w(k)
=\inner{\F v}{\F w}_{\ell^2}.
\]
Thus $\F$ is an inner-product preserving linear map, hence an isometry.

Step 2 (Define $\Finve$ on $\ell^2$ and show $\F\Finve=\mathrm{Id}$).

Take any $\hat v\in \ell^2$. For $N\in\mathbb{N}$ define the partial sums in $H$ by
\[
S_N(x):=\sum_{\substack{k\in\mathbb{Z}^n\\ |k|\le N}}\hat v(k)e^{2\pi i k\cdot x}.
\]
(Here $|k|$ can be any fixed norm on $\mathbb{Z}^n$, e.g. the Euclidean norm.)
Since $\{e_k\}_{k\in\mathbb{Z}^n}$ is an orthonormal basis of $L^2(\mathbb{T}^n)$, Parseval applied componentwise gives, for $N\ge M$,
\[
\|S_N-S_M\|_{H}^2
=\sum_{\substack{k\in\mathbb{Z}^n\\ M<|k|\le N}}\|\hat v(k)\|_{\mathbb{C}^m}^2
\xrightarrow[N,M\to\infty]{}0.
\]
Hence $(S_N)$ is Cauchy in $H$, and since $H$ is complete, there exists $v\in H$ such that
$S_N\to v$ in $H$.
We \emph{define} $\Finve\hat v:=v$; equivalently, $\Finve\hat v$ is the $L^2$-limit of the Fourier series.

Now fix $\ell\in\mathbb{Z}^n$. For $N\ge |\ell|$ we compute
\[
(\F S_N)(\ell)
=\int_{\mathbb{T}^n}\Big(\sum_{|k|\le N}\hat v(k)e^{2\pi i k\cdot x}\Big)e^{-2\pi i \ell\cdot x}\,dx
=\sum_{|k|\le N}\hat v(k)\int_{\mathbb{T}^n}e^{2\pi i (k-\ell)\cdot x}\,dx
=\hat v(\ell),
\]
using orthonormality of $\{e_k\}$ implies orthonormality).
Since $\F$ is bounded (indeed an isometry by Step~1) and $S_N\to v$ in $H$, we have
$\F S_N\to \F v$ in $\ell^2$. Therefore, taking limits in the equality above yields
\[
(\F v)(\ell)=\hat v(\ell)\qquad\text{for all }\ell\in\mathbb{Z}^n,
\]
i.e. $\F(\Finve \hat v)=\hat v$. Hence $\F\Finve=\mathrm{Id}_{\ell^2}$.

Step 3 (Show $\Finve\F=\mathrm{Id}$).
Let $v\in H$ and set $\hat v:=\F v\in \ell^2$.
By definition of $\Finve$ and Parseval, $\Finve \hat v$ is the $L^2$-limit of the Fourier series
with coefficients $\hat v(k)$, which coincides with the expansion of $v$ in the orthonormal basis
$\{e_k\}$ (componentwise). Therefore $\Finve(\F v)=v$, i.e. $\Finve\F=\mathrm{Id}_{H}$.

Hence $\F$ is unitary and $\Finve=\F^*$.
\end{proof}

\subsection{Proof of Theorem \ref{safno}}
\begin{proof}(Theorem \ref{safno})

We prove self-adjointness by computing the adjoint explicitly.

Step 1: Decomposition of the layer operators.

Write
\[
T_k = A_k + B_k,
\]
where
\[
(A_k v)(x) := W_k v(x),
\qquad
B_k := \Finve M_{R_k}\F,
\]
and $M_{R_k}$ is the Fourier multiplier
\[
(M_{R_k}\hat v)(\xi) := R_k(\xi)\hat v(\xi).
\]

Step 2: Boundedness of $T_k$.

Since
\[
\|W_k v(x)\| \le \|W_k\|_{\op}\|v(x)\|
\quad\text{a.e.},
\]
we obtain
\[
\|A_k v\|_{\Hm} \le \|W_k\|_{\op}\|v\|_{\Hm}.
\]
Moreover, using the uniform bound on $R_k$,
\[
\|M_{R_k}\hat v\|_{\ell^2}
\le \Big(\sup_{\xi}\|R_k(\xi)\|_{\op}\Big)\|\hat v\|_{\ell^2}.
\]
Since $\F$ and $\Finve$ are unitary on $\Hm$,
\[
\|B_k\| \le \sup_{\xi}\|R_k(\xi)\|_{\op}.
\]
Hence $T_k$ is bounded.

Step 3: Compute the adjoint of $T_k$.

For $A_k$, we have
\[
\inner{A_k v}{w}_{\Hm}
= \int (W_k v(x))^* w(x)\,dx
= \int v(x)^* W_k^* w(x)\,dx
= \inner{v}{A(W_k^*) w}_{\Hm},
\]
so $A_k^* = A(W_k^*)$.

For $B_k$, since $\Finve=\F^*$,
\[
B_k^* = (\F^* M_{R_k}\F)^* = \F^* M_{R_k}^* \F.
\]
Moreover,
\[
\inner{M_{R_k}\hat v}{\hat w}_{\ell^2}
= \sum_\xi (R_k(\xi)\hat v(\xi))^* \hat w(\xi)
= \sum_\xi \hat v(\xi)^* R_k(\xi)^* \hat w(\xi),
\]
so $M_{R_k}^* = M_{R_k^*}$.
Therefore,
\[
B_k^* = \Finve M_{R_k^*}\F.
\]

Combining,
\[
T_k^* = A(W_k^*) + \Finve M_{R_k^*}\F.
\]

Step 4: Use the adjoint pair symmetry .

By assumption,
\[
W_k^* = W_{L-1-k},
\qquad
R_k(\xi)^* = R_{L-1-k}(\xi),
\]
hence
\[
T_k^* = T_{L-1-k}.
\]

Step 5: Operator representation of $G$.

Define $\mathcal P:\Hd\to\Hm$ by $(\mathcal P a)(x):=Pa(x)$.
Its adjoint satisfies $(\mathcal P)^*=\mathcal P^*$ pointwise.
The recursion yields
\[
G = \mathcal P^*\, T_{L-1}T_{L-2}\cdots T_0\, \mathcal P.
\]

Step 6: Compute $G^*$.

Using $(AB)^*=B^*A^*$ and Step~4,
\[
\begin{aligned}
G^*
&= \mathcal P^* (T_{L-1}\cdots T_0)^* \mathcal P \\
&= \mathcal P^* T_0^* \cdots T_{L-1}^* \mathcal P \\
&= \mathcal P^* T_{L-1}\cdots T_0 \mathcal P \\
&= G.
\end{aligned}
\]

Since $G^*=G$, by definition of adjoint,
\[
\inner{G(a)}{b}_{\Hd}=\inner{a}{G(b)}_{\Hd}
\quad\text{for all }a,b\in\Hd.
\]
Thus $G$ is self-adjoint.
\end{proof}

\subsection{Proof of Theorem \ref{nsafno}}
\begin{proof}(Theorem \ref{nsafno})
Let $\Omega=\mathbb{T}^n$.

Step 1 (A pointwise Lipschitz bound).

Since $\sup_{z}\|D^2\Phi_\theta(z)\|_{\mathrm{op}}\le M$, the gradient is globally Lipschitz:
for all $z_1,z_2\in\mathbb{R}^m$,
\[
\|\nabla\Phi_\theta(z_1)-\nabla\Phi_\theta(z_2)\|
\le M\|z_1-z_2\|.
\]
Also, by the fundamental theorem of calculus along the segment $t\mapsto tz$,
\[
\nabla\Phi_\theta(z)-\nabla\Phi_\theta(0)
=\int_0^1 D^2\Phi_\theta(tz)\,z\,dt,
\]
hence
\[
\|\nabla\Phi_\theta(z)\|\le \|\nabla\Phi_\theta(0)\| + M\|z\|.
\]
Therefore, for any $v\in \Hm$ we have $\nabla\Phi_\theta(v)\in \Hm$ and
\[
\|\nabla\Phi_\theta(v)\|_{\Hm}
\le \|\nabla\Phi_\theta(0)\|\,|\Omega|^{1/2} + M\|v\|_{\Hm},
\]
where $|\Omega|=1$ for Haar probability measure on $\mathbb{T}^n$.

\textbf{Step 2 (Fr\'echet differentiability of $\mathcal{E}_\theta$ and gradient formula).}
Fix $u,h\in\Hd$ and set $v:=Ku\in\Hm$ and $w:=Kh\in\Hm$.
For each $x\in\Omega$, define $\psi_x(t):=\Phi_\theta(v(x)+t w(x))$.
By the one-dimensional fundamental theorem of calculus,
\[
\Phi_\theta(v+w)-\Phi_\theta(v)
=\int_0^1 \nabla\Phi_\theta(v+t w)\cdot w\,dt,
\]
where ``$\cdot$'' is the Euclidean dot product in $\mathbb{R}^m$.
Hence
\[
\mathcal{E}_\theta(u+h)-\mathcal{E}_\theta(u)
=\int_\Omega\int_0^1 \nabla\Phi_\theta(v+t w)\cdot w\,dt\,dx.
\]
Subtract the candidate derivative $\int_\Omega \nabla\Phi_\theta(v)\cdot w\,dx$:
\[
R(u,h)
:=\mathcal{E}_\theta(u+h)-\mathcal{E}_\theta(u)-\int_\Omega \nabla\Phi_\theta(v)\cdot w\,dx
=\int_\Omega\int_0^1\big(\nabla\Phi_\theta(v+t w)-\nabla\Phi_\theta(v)\big)\cdot w\,dt\,dx.
\]
Using the Lipschitz bound from Step 1,
\[
\big|\big(\nabla\Phi_\theta(v+t w)-\nabla\Phi_\theta(v)\big)\cdot w\big|
\le \|\nabla\Phi_\theta(v+t w)-\nabla\Phi_\theta(v)\|\;\|w\|
\le M t \|w\|^2.
\]
Integrating in $t$ and $x$ gives
\[
|R(u,h)|
\le \int_\Omega\int_0^1 M t \|w(x)\|^2\,dt\,dx
=\frac{M}{2}\|w\|_{\Hm}^2
\le \frac{M}{2}\|K\|^2\|h\|_{\Hd}^2.
\]
Therefore $R(u,h)=o(\|h\|_{\Hd})$ as $h\to 0$, and $\mathcal{E}_\theta$ is Fr\'echet differentiable with
\[
D\mathcal{E}_\theta(u)[h]=\int_\Omega \nabla\Phi_\theta((Ku)(x))\cdot (Kh)(x)\,dx
=\inner{\nabla\Phi_\theta(Ku)}{Kh}_{\Hm}.
\]
By definition of the adjoint, $\inner{\nabla\Phi_\theta(Ku)}{Kh}_{\Hm}
=\inner{K^*\nabla\Phi_\theta(Ku)}{h}_{\Hd}$, hence
\[
\nabla\mathcal{E}_\theta(u)=K^*\big(\nabla\Phi_\theta(Ku)\big).
\]

Step 3 (Fr\'echet differentiability of $\mathcal{G}_\theta$ and formula for $D\mathcal{G}_\theta(u)$.

We have $\mathcal{G}_\theta(u)=K^*F(Ku)$ where $F(v):=\nabla\Phi_\theta(v)$ acts pointwise.
Since $K^*$ and $K$ are bounded linear, it suffices to show $F:\Hm\to\Hm$ is Fr\'echet differentiable and compute $DF(v)$.

Fix $v\in\Hm$ and $w\in\Hm$. Pointwise in $\mathbb{R}^m$,
\[
\nabla\Phi_\theta(v+w)-\nabla\Phi_\theta(v) - D^2\Phi_\theta(v)\,w
= \int_0^1 \big(D^2\Phi_\theta(v+t w)-D^2\Phi_\theta(v)\big)\,w\,dt.
\]
Assuming $\Phi_\theta\in C^2$, the map $z\mapsto D^2\Phi_\theta(z)$ is continuous.
Moreover, by boundedness of $D^2\Phi_\theta$,
\[
\|\big(D^2\Phi_\theta(v+t w)-D^2\Phi_\theta(v)\big)\,w\|
\le 2M\|w\|.
\]
Using dominated convergence (in $x$ and $t$) and $w\in L^2$, one obtains
\[
\frac{\|F(v+w)-F(v)-M_{D^2\Phi_\theta(v)}w\|_{\Hm}}{\|w\|_{\Hm}}\to 0
\quad\text{as }\|w\|_{\Hm}\to 0,
\]
so $F$ is Fr\'echet differentiable with
\[
(DF(v)w)(x)=D^2\Phi_\theta(v(x))\,w(x).
\]
Applying the chain rule,
\[
D\mathcal{G}_\theta(u)h
=K^*\big(DF(Ku)(Kh)\big)
=K^*\big(D^2\Phi_\theta(Ku)\,(Kh)\big),
\]
which is the claimed formula.

Step 4 (Boundedness and self-adjointness).

Define the multiplication operator on $\Hm$:
\[
(M_{H_\theta(u)}\varphi)(x):=H_\theta(u)(x)\,\varphi(x),
\quad H_\theta(u)(x)=D^2\Phi_\theta((Ku)(x)).
\]
Since $\|H_\theta(u)(x)\|_{\op}\le M$ for all $x$, we have
\[
\|M_{H_\theta(u)}\varphi\|_{\Hm}\le M\|\varphi\|_{\Hm},
\]
so $M_{H_\theta(u)}$ is bounded.
Thus $D\mathcal{G}_\theta(u)=K^*M_{H_\theta(u)}K$ is bounded.

Finally, for $h_1,h_2\in\Hd$,
\[
\inner{D\mathcal{G}_\theta(u)h_1}{h_2}_{\Hd}
=\inner{M_{H_\theta(u)}Kh_1}{Kh_2}_{\Hm}
=\int_\Omega (Kh_1(x))^\top H_\theta(u)(x)\,(Kh_2(x))\,dx.
\]
Since $H_\theta(u)(x)$ is symmetric for every $x$, the integrand satisfies
\[
(Kh_1)^\top H_\theta(u)\,(Kh_2)=(Kh_2)^\top H_\theta(u)\,(Kh_1),
\]
hence
\[
\inner{D\mathcal{G}_\theta(u)h_1}{h_2}_{\Hd}
=\inner{h_1}{D\mathcal{G}_\theta(u)h_2}_{\Hd}.
\]
Therefore $D\mathcal{G}_\theta(u)$ is self-adjoint.
\end{proof}

\subsection{Proof of Theorem \ref{thm:sno_modified_hamiltonian}}

\begin{lemma}[Hamiltonian commutator identity]\label{lem1}
Let \(F,G:\bbP\to\bbR\) be sufficiently smooth. Then
\[
    [\mathcal{L}_F,\mathcal{L}_G]
    =\mathcal{L}_{\{G,F\}}.
\]
\end{lemma}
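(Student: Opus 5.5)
The plan is to reduce the identity to the Jacobi identity for the canonical Poisson bracket on $\bbP$. I take $\mathcal{L}_F$ to be the Lie derivative along the Hamiltonian vector field $X_F=J^{-1}\nabla F$, acting on sufficiently smooth test functionals $A:\bbP\to\bbR$. Correspondingly, I define the Poisson bracket through the constant form $\omega$:
\[
\mathcal{L}_F A := DA(u)[X_F(u)] = \inner{\nabla A(u)}{J^{-1}\nabla F(u)}_{\bbP} =: \{A,F\}(u).
\]
With these conventions, $\mathcal{L}_F A=\{A,F\}$ holds for every $A$. The commutator is then
\[
[\mathcal{L}_F,\mathcal{L}_G]A=\mathcal{L}_F(\mathcal{L}_G A)-\mathcal{L}_G(\mathcal{L}_F A)=\{\{A,G\},F\}-\{\{A,F\},G\}.
\]
The target is $\mathcal{L}_{\{G,F\}}A=\{A,\{G,F\}\}$. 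Since two first-order operators agreeing on all test functionals are equal, the claim reduces to a scalar bracket identity.

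First, I will check the algebraic properties of $\{\cdot,\cdot\}$. Bilinearity is immediate. Skew-symmetry follows from $J^{-1}=-J$ for the canonical $J(q,p)=(-p,q)$; more generally it follows from skew-adjointness of $J^{-1}$, which is inherited from that of $J$.

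Next, I will prove the Jacobi identity
\[
\{\{A,G\},F\}+\{\{G,F\},A\}+\{\{F,A\},G\}=0 .
\]
To do so, I differentiate $\{A,G\}=\inner{\nabla A}{J^{-1}\nabla G}$ using that $J$ is a constant bounded operator. This gives
\[
\nabla\{A,G\}=D(\nabla A)^*J^{-1}\nabla G+D(\nabla G)^*(J^{-1})^*\nabla A .
\]
The Hessians $D(\nabla A)$, $D(\nabla G)$ are self-adjoint by symmetry of second Fréchet derivatives, as discussed before Theorem~\ref{nsafno}. Hence the cyclic sum becomes a sum of six terms of the form $D^2A(u)[X_G,X_F]$, each of which appears twice with opposite signs. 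All of them therefore cancel, exactly as in the finite-dimensional computation.

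Finally, I combine the pieces. By skew-symmetry, $-\{\{A,F\},G\}=\{\{F,A\},G\}$, so the Jacobi identity gives
\[
\{\{A,G\},F\}-\{\{A,F\},G\}=-\{\{G,F\},A\}=\{A,\{G,F\}\}=\mathcal{L}_{\{G,F\}}A,
\]
which is the lemma.

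I expect the main obstacle to be the infinite-dimensional justification rather than the algebra. One needs $A,F,G\in C^3$ so that the second Fréchet derivatives are symmetric continuous bilinear forms and the brackets are again $C^2$. One also needs the Lie derivatives to compose, which requires bounded Hamiltonian vector fields; this is exactly the regularity the statement hides in ``sufficiently smooth'' and that Theorem~\ref{thm:sno_modified_hamiltonian} assumes on $\mathcal{K}$. I would state these assumptions explicitly. With them in place, strong nondegeneracy of $\omega$, so that $J^{-1}$ is bounded, makes every step legitimate.
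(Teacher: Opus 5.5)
Your proposal is correct and follows the paper's proof essentially verbatim: with the same convention $\mathcal{L}_F A=\{A,F\}$, you expand the commutator as $\{\{A,G\},F\}-\{\{A,F\},G\}$ and rewrite it as $\{A,\{G,F\}\}=\mathcal{L}_{\{G,F\}}A$ via skew-symmetry and the Jacobi identity. The only addition is that you verify the Jacobi identity for the constant-coefficient bracket $\inner{\nabla A}{J^{-1}\nabla F}$ using symmetry of second Fr\'echet derivatives, whereas the paper simply invokes it.
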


\begin{proof}
For any smooth test functional \(A:\bbP\to\bbR\),
\[
    [\mathcal{L}_F,\mathcal{L}_G]A
    =\mathcal{L}_F(\mathcal{L}_G A)-\mathcal{L}_G(\mathcal{L}_F A)
    =\{\{A,G\},F\}-\{\{A,F\},G\}.
\]
By the Jacobi identity for the Poisson bracket,
\[
    \{\{A,G\},F\}-\{\{A,F\},G\}
    =\{A,\{G,F\}\}.
\]
Therefore,
\[
    [\mathcal{L}_F,\mathcal{L}_G]A
    =\mathcal{L}_{\{G,F\}}A.
\]
Since this holds for all test functionals \(A\), the claim follows.
\end{proof}

\begin{lemma}[Modified Hamiltonian generated by BCH]\label{lem2}
Under the BCH assumption, there exists a functional
\[
    \widetilde H_{\varepsilon,m}
    =H+\varepsilon H_1+\varepsilon^2H_2+\cdots+\varepsilon^mH_m
\]
such that
\[
    \exp(\tau\mathcal{L}_{\widetilde H_{\varepsilon,m}})
    =
    \exp(\mathcal{L}_{K_\varepsilon})\exp(\tau\mathcal{L}_H)
    +\mathcal{O}(\varepsilon^{m+1})
\]
as operators acting on smooth functionals on \(\mathcal{K}\). Moreover,
\[
    \norm{\widetilde H_{\varepsilon,m}-H}_{C^0(\mathcal{K})}
    \leq C\varepsilon.
\]
\end{lemma}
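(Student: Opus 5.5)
The plan is to read the BCH assumption of Theorem~\ref{thm:sno_modified_hamiltonian} as a graded expansion in $\varepsilon$, and to show that each coefficient is itself a Hamiltonian Lie derivative using Lemma~\ref{lem1}. The resulting Hamiltonians are then collected into $\widetilde H_{\varepsilon,m}$.

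Write $X:=\tau\mathcal{L}_H$ and $Y:=\mathcal{L}_{K_\varepsilon}=\sum_{j=1}^m\varepsilon^j\mathcal{L}_{K_j}+\mathcal{O}(\varepsilon^{m+1})$. Here linearity of $F\mapsto\mathcal{L}_F$ handles the remainder, and the assumed bounds on the Hamiltonian vector fields on $\mathcal{K}$ make it uniform. Since $Y=\mathcal{O}(\varepsilon)$ while $X$ is not small, I will use the BCH formula linear-and-higher in $Y$, not the symmetric small-small series:
\[
\log(e^{Y}e^{X}) = X + \frac{\mathrm{ad}_X}{1-e^{-\mathrm{ad}_X}}\,Y + \mathcal{O}(Y^2)\text{-terms},
\]
where every term of order $Y^r$ is an iterated commutator of $X$ and $Y$. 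The BCH assumption in the theorem states precisely that this series, truncated at total order $\varepsilon^m$, converges on $\mathcal{K}$ with remainder $\mathcal{O}(\varepsilon^{m+1})$. I will use it as a black box, so only finitely many nested commutators per power of $\varepsilon$ are needed, together with the required smoothness.

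Next I show that each commutator is Hamiltonian. By Lemma~\ref{lem1}, $[\mathcal{L}_F,\mathcal{L}_G]=\mathcal{L}_{\{G,F\}}$, so by induction any iterated commutator of $\tau\mathcal{L}_H$ and the $\mathcal{L}_{K_j}$ equals $\mathcal{L}_P$, where $P$ is the corresponding iterated Poisson bracket of $\tau H$ and the $K_j$. Grouping by powers of $\varepsilon$ gives
\[
\log(e^{Y}e^{X})=\tau\mathcal{L}_H+\sum_{j=1}^m\varepsilon^j\,\tau\mathcal{L}_{H_j}+\mathcal{O}(\varepsilon^{m+1}),
\]
where $\tau H_j$ is the finite (or, under the assumption, convergent) combination of brackets of order $j$ in $\varepsilon$. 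I divide by $\tau$ to define $H_j$ and set $\widetilde H_{\varepsilon,m}:=H+\sum_j\varepsilon^jH_j$. Regularity bookkeeping is needed here: a bracket of $K_{j_1},\dots,K_{j_r}$ with $\sum j_i=j$ loses at most $j$ derivatives. This is consistent with $K_j\in C^{m+2-j}$ and $H\in C^{m+2}$, so $H_j$ is at least $C^{2}$ and $\mathcal{L}_{H_j}$ makes sense. Exponentiating both sides, using $\exp(A+\mathcal{O}(\varepsilon^{m+1}))=\exp(A)+\mathcal{O}(\varepsilon^{m+1})$ (valid because the relevant operators are bounded on smooth functionals over $\mathcal{K}$ by assumption), gives the claimed identity $\exp(\tau\mathcal{L}_{\widetilde H_{\varepsilon,m}})=\exp(\mathcal{L}_{K_\varepsilon})\exp(\tau\mathcal{L}_H)+\mathcal{O}(\varepsilon^{m+1})$. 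The operator order matches the Gröbner convention, under which $S_\varepsilon=\Phi^1_{K_\varepsilon}\circ\Phi^\tau_H$ acts on observables by pull-back in reverse order.

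Finally, for the $C^0$ bound, each $H_j$ is a fixed bracket expression whose $C^0(\mathcal{K})$ norm is controlled by the assumed boundedness of the Hamiltonian vector fields and Lie derivatives on $\mathcal{K}$. Hence $\|\widetilde H_{\varepsilon,m}-H\|_{C^0(\mathcal{K})}\le\sum_j\varepsilon^j\|H_j\|\le C\varepsilon$ for $\varepsilon\le1$.

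The main obstacle is the second step, i.e.\ justifying the regrouping of the BCH series into Hamiltonian terms with an $\mathcal{O}(\varepsilon^{m+1})$ remainder uniform on $\mathcal{K}$. The large parameter $\tau$ prevents a naive small-parameter expansion, so the plan leans on the stated BCH hypothesis for convergence and uses Lemma~\ref{lem1} only algebraically, term by term.
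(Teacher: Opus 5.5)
Your proposal follows the paper's argument. You expand $\log(e^{\mathcal{L}_{K_\varepsilon}}e^{\tau\mathcal{L}_H})$ by BCH and use Lemma~\ref{lem1} to turn every nested commutator into a Hamiltonian Lie derivative. You then collect powers of $\varepsilon$ into $H_1,\dots,H_m$, exponentiate, and get the $C^0(\mathcal{K})$ bound from boundedness of the $H_j$, with the BCH hypothesis carrying convergence and uniformity throughout. Grading by powers of $Y$ rather than by total degree is a sensible refinement of the paper's presentation, since infinitely many $\mathrm{ad}_{\tau\mathcal{L}_H}$-terms occur at each fixed order in $\varepsilon$.

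Two slips do not affect the argument:
\begin{itemize}
\item For the ordering $e^{Y}e^{X}$, the linear-in-$Y$ coefficient is $\mathrm{ad}_X/(e^{\mathrm{ad}_X}-1)$, not $\mathrm{ad}_X/(1-e^{-\mathrm{ad}_X})$.
\item Your derivative count ignores that each bracket with $H$ also costs a derivative. With unboundedly many such brackets, the regularity and boundedness of the $H_j$ must be taken from the hypothesis, as the paper implicitly does, rather than derived from $K_j\in C^{m+2-j}$.
\end{itemize}
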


\begin{proof}
Let
\[
    A:=\tau\mathcal{L}_H,
    \qquad
    B:=\mathcal{L}_{K_\varepsilon}.
\]
By assumption,
\[
    B=\varepsilon\mathcal{L}_{K_1}+\varepsilon^2\mathcal{L}_{K_2}+\cdots+\varepsilon^m\mathcal{L}_{K_m}+\mathcal{O}(\varepsilon^{m+1}).
\]
The Baker--Campbell--Hausdorff formula gives
\[
\begin{aligned}
    \log(\exp(B)\exp(A))
    &=A+B+\frac12[B,A]
      +\frac{1}{12}[B,[B,A]]
      +\frac{1}{12}[A,[A,B]]
      +\cdots,
\end{aligned}
\]
where only terms up to order \(\varepsilon^m\) are retained.

Because \(A=\tau\mathcal{L}_H\) and each term in \(B\) is a Hamiltonian Lie derivative, Lemma ~\ref{lem1} implies that every nested commutator appearing in the BCH expansion is also a Hamiltonian Lie derivative. Hence there exist functionals \(H_1,\ldots,H_m\) such that
\[
    \log(\exp(B)\exp(A))
    =\tau\mathcal{L}_{H+\varepsilon H_1+\cdots+\varepsilon^mH_m}
     +\mathcal{O}(\varepsilon^{m+1}).
\]
Define
\[
    \widetilde H_{\varepsilon,m}:=H+\varepsilon H_1+\cdots+\varepsilon^mH_m.
\]
Then
\[
    \exp(\mathcal{L}_{K_\varepsilon})\exp(\tau\mathcal{L}_H)
    =\exp(\tau\mathcal{L}_{\widetilde H_{\varepsilon,m}})
     +\mathcal{O}(\varepsilon^{m+1}).
\]
Since the coefficients \(H_j\) are bounded on \(\mathcal{K}\), we also have
\[
    \norm{\widetilde H_{\varepsilon,m}-H}_{C^0(\mathcal{K})}
    \leq C\varepsilon.
\]
\end{proof}

\begin{proof}(Theorem \ref{thm:sno_modified_hamiltonian})

By the Hamiltonian perturbation representation,
\[
    S_\varepsilon=\Phi_{K_\varepsilon}^1\circ\Phi_H^\tau.
\]
For any smooth observable \(F:\bbP\to\bbR\), pullback by \(S_\varepsilon\) gives
\[
    F\circ S_\varepsilon
    =F\circ\Phi_{K_\varepsilon}^1\circ\Phi_H^\tau
    =\exp(\tau\mathcal{L}_H)\exp(\mathcal{L}_{K_\varepsilon})F,
\]
up to the convention of whether Lie operators act on the left or right. Fixing this convention consistently, the Lemma~\ref{lem2} yields a modified Hamiltonian \(\widetilde H_{\varepsilon,m}\) such that the pullback action of \(S_\varepsilon\) agrees with the exact time-\(\tau\) flow of \(\widetilde H_{\varepsilon,m}\) up to order \(\varepsilon^{m+1}\):
\[
    F\circ S_\varepsilon
    =F\circ\Phi_{\widetilde H_{\varepsilon,m}}^\tau
     +\mathcal{O}(\varepsilon^{m+1})
\]
uniformly on \(\mathcal{K}\).

Now choose
\[
    F=\widetilde H_{\varepsilon,m}.
\]
Since the exact Hamiltonian flow of \(\widetilde H_{\varepsilon,m}\) preserves \(\widetilde H_{\varepsilon,m}\),
\[
    \widetilde H_{\varepsilon,m}(\Phi_{\widetilde H_{\varepsilon,m}}^\tau(u))
    =\widetilde H_{\varepsilon,m}(u).
\]
Therefore,
\[
\begin{aligned}
    \widetilde H_{\varepsilon,m}(S_\varepsilon(u))
    &=\widetilde H_{\varepsilon,m}(\Phi_{\widetilde H_{\varepsilon,m}}^\tau(u))
      +\mathcal{O}(\varepsilon^{m+1}) \\
    &=\widetilde H_{\varepsilon,m}(u)+\mathcal{O}(\varepsilon^{m+1}).
\end{aligned}
\]
Thus there exists a constant \(C>0\) such that for all \(u\in\mathcal{K}\),
\[
    \abs{\widetilde H_{\varepsilon,m}(S_\varepsilon(u))-\widetilde H_{\varepsilon,m}(u)}
    \leq C\varepsilon^{m+1}.
\]
Applying this estimate to \(u=u_k\) and summing over \(k=0,\ldots,n-1\), we obtain
\[
\begin{aligned}
    \abs{\widetilde H_{\varepsilon,m}(u_n)-\widetilde H_{\varepsilon,m}(u_0)}
    &\leq
    \sum_{k=0}^{n-1}
    \abs{\widetilde H_{\varepsilon,m}(u_{k+1})-\widetilde H_{\varepsilon,m}(u_k)} \\
    &\leq Cn\varepsilon^{m+1}.
\end{aligned}
\]
This proves the modified-Hamiltonian drift estimate.

Next, since
\[
    \norm{\widetilde H_{\varepsilon,m}-H}_{C^0(\mathcal{K})}\leq C\varepsilon,
\]
we have
\[
\begin{aligned}
    \abs{H(u_n)-H(u_0)}
    &\leq
    \abs{H(u_n)-\widetilde H_{\varepsilon,m}(u_n)} \\
    &\quad+
    \abs{\widetilde H_{\varepsilon,m}(u_n)-\widetilde H_{\varepsilon,m}(u_0)} \\
    &\quad+
    \abs{\widetilde H_{\varepsilon,m}(u_0)-H(u_0)} \\
    &\leq C\varepsilon+Cn\varepsilon^{m+1}+C\varepsilon.
\end{aligned}
\]
Absorbing constants gives
\[
    \abs{H(u_n)-H(u_0)}
    \leq C\varepsilon+Cn\varepsilon^{m+1}.
\]
Finally, if \(n\leq c\varepsilon^{-m}\), then
\[
    n\varepsilon^{m+1}\leq c\varepsilon,
\]
and hence
\[
    \abs{H(u_n)-H(u_0)}\leq C\varepsilon.
\]
\end{proof}

\section{More detail of experiment}
\label{Exper}

\subsection{Data generation}
All datasets used in this paper are generated using the same unified procedure.
The wave equation, Klein--Gordon equation, and electromagnetic wave equation
are treated as instances of a common Hamiltonian evolution problem, and differ
only in the specific form of the governing operator.
We describe the pipeline below using the 1D wave equation as a representative example.

\paragraph{Continuous model.}
We consider Hamiltonian partial differential equations of the general form
\begin{equation}
\partial_t U = \mathcal{J}\,\frac{\delta H}{\delta U},
\qquad
U(\cdot,t)\in\mathbb{V},
\end{equation}
where $\mathbb{V}$ is a function space over a one-dimensional spatial domain,
$\mathcal{J}$ is a skew-adjoint structure operator, and $H$ is the Hamiltonian
functional.
Specific systems (wave, Klein--Gordon, Maxwell) are obtained by specifying
$H$ and $\mathcal{J}$.

\paragraph{Spatial and temporal discretization.}
For all systems, the spatial domain $[0,1]$ is discretized using $N_x$ grid points
\[
x_j = j\Delta x,\qquad j=0,\dots,N_x-1,
\]
and time is advanced using a fixed step size $\Delta t$.
To ensure numerical stability, we enforce a CFL-type condition of the form
\begin{equation}
\frac{c\,\Delta t}{\Delta x}\le 1,
\end{equation}
where $c$ denotes the characteristic wave speed of the system.
Datasets violating this condition are not generated.

\paragraph{Random initial conditions.}
Initial conditions are generated as smooth random functions using truncated
Chebyshev expansions.
Each function is constructed as
\[
\tilde f(x)=\sum_{k=0}^{K} a_k T_k(\xi(x)),
\qquad
\xi(x)=2\frac{x-0}{1-0}-1,
\]
with coefficients $a_k\sim\mathrm{Unif}[-1,1]$.
To enforce homogeneous Dirichlet boundary conditions, we apply the envelope
\begin{equation}
f(x)=\tilde f(x)\,x(1-x),
\end{equation}
which guarantees $f(0)=f(1)=0$.
All initial fields are normalized to lie in $[-1,1]$.
The same procedure is applied independently to each component of the state
(e.g., displacement/velocity or electric/magnetic fields).

\paragraph{Ground-truth time advancement.}
For every system, the continuous PDE is first reduced to a semi-discrete system
of ordinary differential equations via finite differences in space.
Second-order spatial derivatives are approximated using centered difference stencils,
and boundary values are fixed according to the prescribed boundary conditions.
The resulting ODE system is integrated over a single time step $\Delta t$ using
an adaptive Runge--Kutta solver (RK45) with tight tolerances
(rtol=atol=$10^{-8}$).
This defines a one-step ground-truth flow map
\[
U^{n+1}=\Phi_{\Delta t}(U^n).
\]

\paragraph{Dataset construction.}
Each training sample consists of an input--target pair
\[
\texttt{input} = U^n,\qquad
\texttt{target} = U^{n+1},
\]
where $U^n$ and $U^{n+1}$ are represented by their values on the spatial grid.
All datasets differ only in the definition of the underlying Hamiltonian system;
the discretization, randomization, and numerical integration procedures are
identical across experiments.

\paragraph{Dataset size.}
For each physical system, we generate a total of $100{,}000$ independent
input--target pairs.
Each pair corresponds to a randomly sampled initial condition $U^n$ and its
one-step evolution $U^{n+1}=\Phi_{\Delta t}(U^n)$.
All models are trained and evaluated on datasets of identical size generated
using the same pipeline, ensuring a fair comparison across architectures.

\subsection{Model size used in the experiments}

To ensure a fair comparison, all neural operator models were configured to have
comparable model capacity.
Specifically, we matched the number of Fourier modes, hidden channels, and network depth
across SNO, FNO, GNO, and CNO, so that performance differences primarily reflect
architectural structure rather than parameter count.
All models used in the experiments contain approximately $2.3$ million trainable parameters.
For the proposed SNO, this corresponds to an architecture composed of three symplectic
shear-type blocks for the wave and Schr\"odinger equations, six symplectic shear-type
blocks for the electromagnetic equation, and five symplectic shear-type blocks for the
Klein--Gordon equation.

\subsection{Detail about the systems used in the experiments}
\paragraph{Electromagnetic wave equation:}
In the second experiment, we focus on learning and predicting the solutions of an electromagnetic wave equation.
We use the model of the electromagnetic wave equation derived from simplified Maxwell's equations:
\begin{align*}
    \nabla \times \vec{E} &= -\frac{\partial \vec{B}}{\partial t}, \text{(Faraday's law of induction)} \\
    \nabla \times \vec{B} &= \mu_0 \epsilon_0 \frac{\partial \vec{E}}{\partial t}, \text{(Ampère-Maxwell law)}
\end{align*}
The Hamiltonian density of this system is given by 
$
\mathcal{H} = \frac{1}{2}\left(\epsilon_0|\vec{E}|^2+\frac{1}{\mu_0}|\vec{B}|^2\right).
$
We will simplify them to 1D version by choosing 
$\vec{E}=(0,0,\frac{c}{\sqrt{\mu_0}}E)$ where $c = \frac{1}{\sqrt{\mu_0 \epsilon_0}}$ is the wave speed and
$\vec{B}=(0,\frac{1}{\sqrt{\mu_0}}B,0)$. Then, we obtain a Hamiltonian system with the Hamiltonian density $\mathcal{H}=\frac{1}{2}c(E^2+B^2)$. The Hamiltonian equation is given by
$$
\partial_t E = c\,\partial_x B,
\qquad
\partial_t B = c\,\partial_x E.
$$
\paragraph{Schr\"odinger equation:}
In third experiment we focus on learning and predicting the evolution of a quantum wave function
governed by the time-dependent Schr\"odinger equation on the spatial interval $I=[0,1]$.
We assume natural units with $\hbar=1$ and particle mass $m=1$.
The Hamiltonian density of the system is given by
\[
\mathcal{H} = \frac{1}{2}\left(|\partial_x \psi|^2 + V(x)|\psi|^2\right),
\]
where $\psi$ is the complex-valued wave function and $V(x)$ denotes the potential energy.
The time evolution is governed by
\[
i\partial_t \psi(x,t) = -\frac{1}{2}\partial_x^2\psi(x,t) + V(x)\psi(x,t),
\quad x\in[0,1].
\]
We impose Dirichlet boundary conditions,
\(
\psi(0,t)=\psi(1,t)=0,
\)
and prescribe the initial condition $\psi(x,0)=\psi_0(x)$, where $\psi_0$ is a randomly generated
smooth function satisfying the boundary conditions.

\paragraph{Nonlinear Klein--Gordon Equation:}In fourth experiment we consider the nonlinear Klein--Gordon equation on the spatial interval
$I=[0,1]$ with wave speed parameter $c>0$.
This system describes the dynamics of a scalar field $q(x,t)$ with conjugate momentum
$p(x,t)=\partial_t q(x,t)$ and includes both linear and nonlinear potential terms.
The Hamiltonian density is given by
\[
\mathcal{H}
= \frac{1}{2}p^2
+ \frac{1}{2}c^2(\partial_x q)^2
- \frac{1}{2}q^2
- \frac{1}{4}q^4,
\]
which consists of kinetic energy, spatial gradient energy, a quadratic potential,
and a cubic nonlinear interaction.
The corresponding Hamiltonian equations of motion are
\[
\partial_t q(x,t) = \dfrac{\delta \mathcal{H}}{\delta p(x)} = p(x,t), \quad
\partial_t p(x,t) = -\dfrac{\delta \mathcal{H}}{\delta q(x)}
= c^2\partial_x^2 q(x,t) + q(x,t) + q(x,t)^3.
\]

\subsection{More detail about Experimental result}
Since training and test losses reflect only one-step prediction accuracy,
we focus our evaluation on rollout errors and energy conservation,
which are more informative for long-time dynamical behavior.

Quantitative results are summarized from Tables~\ref{tab:sm-avg_errors_time_rows}
to ~\ref{tab:kg-energy_errors_time_rows}.
Across all models, short-time predictions are comparable, while long-time rollouts
reveal substantial differences.
The Symplectic Neural Operator consistently achieves smaller state and energy errors,
indicating improved long-time stability and reduced error accumulation.

\begin{table}[htbp]
\centering
\small
\setlength{\tabcolsep}{6pt}
\begin{tabular}{ccccc}
\hline
Time step & SNO (ours) & FNO & GNO & CNO \\
\hline
1   & \textbf{2.7199e-04 ± 1.3548e-04} & \underline{1.6310e-03 ± 1.4907e-04} & 7.4758e-03 ± 9.4954e-04 & 9.4299e-03 ± 2.0430e-03 \\
10  & \textbf{2.7195e-03 ± 1.3584e-03} & \underline{1.5979e-02 ± 1.5352e-03} & 7.4842e-02 ± 9.4711e-03 & 9.5325e-02 ± 2.2813e-02 \\
20  & \textbf{5.4280e-03 ± 2.7210e-03} & \underline{3.1931e-02 ± 2.9939e-03} & 1.4985e-01 ± 1.8881e-02 & 1.9551e-01 ± 4.9128e-02 \\
30  & \textbf{8.1092e-03 ± 4.0821e-03} & \underline{4.7849e-02 ± 4.3395e-03} & 2.2501e-01 ± 2.8216e-02 & 2.9996e-01 ± 7.7141e-02 \\
40  & \textbf{1.0747e-02 ± 5.4359e-03} & \underline{6.3708e-02 ± 5.6701e-03} & 3.0028e-01 ± 3.7465e-02 & 4.0712e-01 ± 1.0538e-01 \\
50  & \textbf{1.3327e-02 ± 6.7769e-03} & \underline{7.9524e-02 ± 6.9228e-03} & 3.7563e-01 ± 4.6613e-02 & 5.1518e-01 ± 1.3225e-01 \\
60  & \textbf{1.5833e-02 ± 8.0997e-03} & \underline{9.5389e-02 ± 8.1690e-03} & 4.5102e-01 ± 5.5648e-02 & 6.2188e-01 ± 1.5581e-01 \\
70  & \textbf{1.8252e-02 ± 9.3993e-03} & \underline{1.1134e-01 ± 9.3741e-03} & 5.2643e-01 ± 6.4557e-02 & 7.2544e-01 ± 1.7429e-01 \\
80  & \textbf{2.0569e-02 ± 1.0671e-02} & \underline{1.2728e-01 ± 1.0637e-02} & 6.0181e-01 ± 7.3327e-02 & 8.2435e-01 ± 1.8704e-01 \\
90  & \textbf{2.2773e-02 ± 1.1909e-02} & \underline{1.4327e-01 ± 1.1788e-02} & 6.7713e-01 ± 8.1947e-02 & 9.1804e-01 ± 1.9494e-01 \\
100 & \textbf{2.4853e-02 ± 1.3111e-02} & \underline{1.5923e-01 ± 1.2960e-02} & 7.5235e-01 ± 9.0405e-02 & 1.0066e+00 ± 1.9912e-01 \\
200 & \textbf{3.7419e-02 ± 2.2398e-02} & \underline{3.2227e-01 ± 2.3606e-02} & 1.4915e+00 ± 1.6440e-01 & 1.6555e+00 ± 1.7080e-01 \\
300 & \textbf{4.0859e-02 ± 2.3173e-02} & \underline{4.9736e-01 ± 3.7143e-02} & 2.1886e+00 ± 2.1644e-01 & 1.9718e+00 ± 1.4493e-01 \\
400 & \textbf{5.5070e-02 ± 1.9200e-02} & \underline{6.8740e-01 ± 5.7739e-02} & 2.8333e+00 ± 2.4463e-01 & 2.1381e+00 ± 2.0140e-01 \\
500 & \textbf{8.2051e-02 ± 2.4253e-02} & \underline{8.8926e-01 ± 8.4112e-02} & 3.4335e+00 ± 2.5068e-01 & 2.2599e+00 ± 2.6072e-01 \\
600 & \textbf{1.0769e-01 ± 3.9625e-02} & \underline{1.0966e+00 ± 1.1677e-01} & 4.2936e+00 ± 9.8630e-01 & 2.3186e+00 ± 2.6773e-01 \\
700 & \textbf{1.2600e-01 ± 5.6583e-02} & \underline{1.3017e+00 ± 1.5731e-01} & 6.0195e+00 ± 2.1125e+00 & 2.3517e+00 ± 2.5508e-01 \\
800 & \textbf{1.3915e-01 ± 7.1331e-02} & \underline{1.5017e+00 ± 2.0729e-01} & 7.4975e+00 ± 2.6548e+00 & 2.3660e+00 ± 2.3327e-01 \\
900 & \textbf{1.5559e-01 ± 7.4343e-02} & \underline{1.7109e+00 ± 2.8068e-01} & 8.2847e+00 ± 2.9910e+00 & 2.3650e+00 ± 2.0880e-01 \\
1000& \textbf{1.7536e-01 ± 6.6676e-02} & \underline{1.9667e+00 ± 4.6882e-01} & 9.4554e+00 ± 3.3402e+00 & 2.3542e+00 ± 1.8758e-01 \\
\hline
\end{tabular}
\caption{Wave equation: Average $L^2$ error of the state $[f;g]$ at selected rollout steps (mean $\pm$ std over 10 random initial conditions).}
\label{tab:sm-avg_errors_time_rows}
\end{table}

\begin{table}[htbp]
\centering
\small
\setlength{\tabcolsep}{6pt}
\begin{tabular}{ccccc}
\hline
Time step & SNO (ours) & FNO & GNO & CNO \\
\hline
1   & \textbf{1.4459e-06 ± 7.4289e-07} & \underline{2.2608e-04 ± 1.4978e-04} & 9.4928e-04 ± 1.0454e-03 & 2.6988e-03 ± 2.1165e-03 \\
10  & \textbf{1.4953e-05 ± 6.8038e-06} & \underline{2.3125e-03 ± 1.4424e-03} & 1.0321e-02 ± 1.0601e-02 & 2.8130e-02 ± 2.2545e-02 \\
20  & \textbf{3.1735e-05 ± 1.2029e-05} & \underline{4.6844e-03 ± 2.9212e-03} & 2.2827e-02 ± 2.1315e-02 & 5.9586e-02 ± 4.8339e-02 \\
30  & \textbf{5.0383e-05 ± 1.9147e-05} & \underline{7.2688e-03 ± 4.3800e-03} & 3.7477e-02 ± 3.2373e-02 & 9.3951e-02 ± 7.7114e-02 \\
40  & \textbf{7.0818e-05 ± 3.0480e-05} & \underline{1.0082e-02 ± 5.7834e-03} & 5.5042e-02 ± 4.2850e-02 & 1.3015e-01 ± 1.0824e-01 \\
50  & \textbf{9.3056e-05 ± 4.6665e-05} & \underline{1.3204e-02 ± 7.0168e-03} & 7.5484e-02 ± 5.2638e-02 & 1.6695e-01 ± 1.4029e-01 \\
60  & \textbf{1.1687e-04 ± 6.7220e-05} & \underline{1.6456e-02 ± 8.2936e-03} & 9.8500e-02 ± 6.2059e-02 & 2.0661e-01 ± 1.6734e-01 \\
70  & \textbf{1.4221e-04 ± 9.1572e-05} & \underline{2.0054e-02 ± 9.6306e-03} & 1.2403e-01 ± 7.1112e-02 & 2.4542e-01 ± 1.9143e-01 \\
80  & \textbf{1.6884e-04 ± 1.1892e-04} & \underline{2.3902e-02 ± 1.0924e-02} & 1.5203e-01 ± 7.9797e-02 & 2.8278e-01 ± 2.1252e-01 \\
90  & \textbf{1.9918e-04 ± 1.4464e-04} & \underline{2.8001e-02 ± 1.2327e-02} & 1.8242e-01 ± 8.8117e-02 & 3.1881e-01 ± 2.3146e-01 \\
100 & \textbf{2.3046e-04 ± 1.7204e-04} & \underline{3.2277e-02 ± 1.3760e-02} & 2.1514e-01 ± 9.6073e-02 & 3.5376e-01 ± 2.4886e-01 \\
200 & \textbf{5.3384e-04 ± 3.5724e-04} & \underline{8.6136e-02 ± 3.1679e-02} & 6.5480e-01 ± 1.5696e-01 & 6.5382e-01 ± 3.0211e-01 \\
300 & \textbf{6.1899e-04 ± 2.6080e-04} & \underline{1.6373e-01 ± 5.6060e-02} & 1.2476e+00 ± 1.9647e-01 & 8.0728e-01 ± 1.7012e-01 \\
400 & \textbf{5.2980e-04 ± 1.8847e-04} & \underline{2.6476e-01 ± 8.5831e-02} & 1.9339e+00 ± 2.5143e-01 & 8.6420e-01 ± 8.9766e-02 \\
500 & \textbf{5.6292e-04 ± 4.1551e-04} & \underline{3.8614e-01 ± 1.1908e-01} & 2.6946e+00 ± 3.7475e-01 & 9.0783e-01 ± 1.5335e-02 \\
600 & \textbf{1.5332e-03 ± 1.5903e-03} & \underline{5.2914e-01 ± 1.5795e-01} & 5.4733e+00 ± 6.2686e+00 & 9.1476e-01 ± 1.6578e-02 \\
700 & \textbf{2.6907e-03 ± 2.8758e-03} & \underline{6.9558e-01 ± 2.0914e-01} & 1.5929e+01 ± 1.5758e+01 & 9.1541e-01 ± 1.6469e-02 \\
800 & \textbf{2.9956e-03 ± 3.5396e-03} & \underline{8.7450e-01 ± 2.7667e-01} & 1.9374e+01 ± 1.4001e+01 & 9.1565e-01 ± 1.6722e-02 \\
900 & \textbf{2.6197e-03 ± 2.6291e-03} & 1.0742e+00 ± 3.8860e-01 & 2.3433e+01 ± 2.2858e+01 & \underline{9.1561e-01 ± 1.6820e-02} \\
1000& \textbf{2.0607e-03 ± 1.0829e-03} & 1.3534e+00 ± 7.0769e-01 & 3.0558e+01 ± 2.2953e+01 & \underline{9.1559e-01 ± 1.6759e-02} \\
\hline
\end{tabular}
\caption{Wave equation: Average relative energy error at selected rollout steps (mean $\pm$ std over 10 random initial conditions).
Bold indicates the smallest error; underline indicates the second smallest.}
\label{tab:sw-energy_errors_time_rows}
\end{table}

\begin{table}[htbp]
\centering
\small
\setlength{\tabcolsep}{6pt}
\begin{tabular}{ccccc}
\hline
Time step & SNO (ours) & FNO & GNO & CNO \\
\hline
1   & \textbf{4.5632e-04 ± 6.2522e-05} & 1.8360e-02 ± 5.9169e-03 & \underline{8.3330e-04 ± 1.2442e-04} & 4.6406e-03 ± 9.9123e-04 \\
10  & \textbf{4.5745e-03 ± 7.8398e-04} & 1.4679e-01 ± 6.4866e-02 & \underline{8.4677e-03 ± 1.2333e-03} & 4.5723e-02 ± 1.0201e-02 \\
20  & \textbf{9.3278e-03 ± 2.3517e-03} & 2.2794e-01 ± 1.1790e-01 & \underline{1.8027e-02 ± 3.0725e-03} & 9.2067e-02 ± 2.0757e-02 \\
30  & \textbf{1.4660e-02 ± 5.9098e-03} & 3.0493e-01 ± 1.1800e-01 & \underline{3.1311e-02 ± 1.1705e-02} & 1.3905e-01 ± 3.2077e-02 \\
40  & \textbf{2.1306e-02 ± 1.2379e-02} & 3.4633e-01 ± 1.0484e-01 & \underline{5.0656e-02 ± 3.0191e-02} & 1.8647e-01 ± 4.4536e-02 \\
50  & \textbf{3.0452e-02 ± 2.0426e-02} & 3.9122e-01 ± 9.7627e-02 & \underline{7.9810e-02 ± 5.3603e-02} & 2.3440e-01 ± 5.8072e-02 \\
60  & \textbf{4.2399e-02 ± 3.0610e-02} & 4.4391e-01 ± 9.6033e-02 & \underline{1.2264e-01 ± 9.1107e-02} & 2.8286e-01 ± 7.2414e-02 \\
70  & \textbf{5.6973e-02 ± 4.5583e-02} & 4.9884e-01 ± 1.0488e-01 & \underline{1.7742e-01 ± 1.4877e-01} & 3.3184e-01 ± 8.6958e-02 \\
80  & \textbf{7.4384e-02 ± 6.6249e-02} & 5.4432e-01 ± 1.1937e-01 & \underline{2.4194e-01 ± 2.1344e-01} & 3.8121e-01 ± 1.0130e-01 \\
90  & \textbf{9.4769e-02 ± 9.1818e-02} & 6.0122e-01 ± 1.3131e-01 & \underline{3.1552e-01 ± 2.7505e-01} & 4.3118e-01 ± 1.1544e-01 \\
100 & \textbf{1.1766e-01 ± 1.2165e-01} & 6.5749e-01 ± 1.4798e-01 & \underline{3.9436e-01 ± 3.2767e-01} & 4.8184e-01 ± 1.2959e-01 \\
200 & \textbf{4.3124e-01 ± 4.7197e-01} & 1.1975e+00 ± 4.2567e-01 & 1.6674e+00 ± 1.2205e+00 & \underline{1.0066e+00 ± 2.7625e-01} \\
300 & \textbf{1.0474e+00 ± 6.1744e-01} & 1.8022e+00 ± 4.3406e-01 & 4.1721e+00 ± 4.3762e+00 & \underline{1.4704e+00 ± 2.9798e-01} \\
400 & \textbf{1.5647e+00 ± 7.7639e-01} & 2.2607e+00 ± 5.7056e-01 & 6.3104e+00 ± 5.2269e+00 & \underline{1.7706e+00 ± 2.7231e-01} \\
500 & \textbf{1.9844e+00 ± 7.5414e-01} & 2.5713e+00 ± 5.0411e-01 & 1.1378e+01 ± 9.6172e+00 & \underline{1.9626e+00 ± 2.2414e-01} \\
600 & \textbf{2.2745e+00 ± 5.9189e-01} & 2.8325e+00 ± 3.3922e-01 & 1.5642e+01 ± 1.0283e+01 & \underline{2.0881e+00 ± 1.8571e-01} \\
700 & \textbf{2.5724e+00 ± 4.6883e-01} & 3.0952e+00 ± 2.5190e-01 & 2.2671e+01 ± 1.0202e+01 & \underline{2.1514e+00 ± 1.7225e-01} \\
800 & \textbf{2.7710e+00 ± 4.1466e-01} & 3.3068e+00 ± 1.9117e-01 & 2.8407e+01 ± 1.0182e+01 & \underline{2.1758e+00 ± 1.7039e-01} \\
900 & \textbf{2.9575e+00 ± 3.4610e-01} & 3.5580e+00 ± 1.8327e-01 & 3.3353e+01 ± 9.1073e+00 & \underline{2.1990e+00 ± 1.6781e-01} \\
1000& \textbf{3.0579e+00 ± 3.6088e-01} & 3.6944e+00 ± 1.7754e-01 & 3.8659e+01 ± 7.1666e+00 & \underline{2.2088e+00 ± 1.6669e-01} \\
\hline
\end{tabular}
\caption{EM equation: Average $L^2$ error of the state $[E;B]$ at selected rollout steps
(mean $\pm$ std over 10 random initial conditions).
Bold indicates the smallest error; underline indicates the second smallest.}
\label{tab:em-l2_errors_time_rows}
\end{table}

\begin{table}[htbp]
\centering
\small
\setlength{\tabcolsep}{6pt}
\begin{tabular}{ccccc}
\hline
Time step & SNO (ours) & FNO & GNO & CNO \\
\hline
1   & \textbf{9.8087e-05 ± 5.3164e-05} & 1.9989e-03 ± 1.6526e-03 & \underline{5.1449e-04 ± 9.3838e-05} & 2.8742e-03 ± 1.5544e-03 \\
10  & \textbf{9.8581e-04 ± 5.5269e-04} & 1.8536e-02 ± 1.0730e-02 & \underline{5.1631e-03 ± 9.3676e-04} & 2.8716e-02 ± 1.5396e-02 \\
20  & \textbf{2.0142e-03 ± 1.2342e-03} & 3.8661e-02 ± 2.1901e-02 & \underline{1.0355e-02 ± 1.8497e-03} & 5.7328e-02 ± 3.0679e-02 \\
30  & \textbf{3.1119e-03 ± 2.1417e-03} & 4.5181e-02 ± 2.6999e-02 & \underline{1.5465e-02 ± 2.7265e-03} & 8.5912e-02 ± 4.6158e-02 \\
40  & \textbf{4.2724e-03 ± 3.2747e-03} & 5.6596e-02 ± 3.2607e-02 & \underline{2.0352e-02 ± 3.9589e-03} & 1.1446e-01 ± 6.1969e-02 \\
50  & \textbf{5.4486e-03 ± 4.5561e-03} & 6.6241e-02 ± 4.0407e-02 & \underline{2.4781e-02 ± 6.2967e-03} & 1.4310e-01 ± 7.7985e-02 \\
60  & \textbf{6.6052e-03 ± 5.9261e-03} & 7.1880e-02 ± 4.7052e-02 & \underline{2.8857e-02 ± 9.9159e-03} & 1.7202e-01 ± 9.4025e-02 \\
70  & \textbf{7.7903e-03 ± 7.3125e-03} & 7.6019e-02 ± 5.2680e-02 & \underline{3.3675e-02 ± 1.2497e-02} & 2.0138e-01 ± 1.0974e-01 \\
80  & \textbf{9.3647e-03 ± 8.2584e-03} & 8.5042e-02 ± 5.7537e-02 & \underline{4.0258e-02 ± 1.2403e-02} & 2.3117e-01 ± 1.2501e-01 \\
90  & \textbf{1.0967e-02 ± 9.0265e-03} & 9.3285e-02 ± 6.3119e-02 & \underline{4.8868e-02 ± 1.1959e-02} & 2.6140e-01 ± 1.3965e-01 \\
100 & \textbf{1.2541e-02 ± 9.5424e-03} & 1.0140e-01 ± 7.0104e-02 & \underline{5.8771e-02 ± 1.5554e-02} & 2.9200e-01 ± 1.5347e-01 \\
200 & \textbf{4.0546e-02 ± 5.2670e-02} & \underline{1.2702e-01 ± 9.9095e-02} & 5.2447e-01 ± 7.3528e-01 & 5.7894e-01 ± 2.1195e-01 \\
300 & \textbf{7.4135e-02 ± 4.6023e-02} & \underline{7.7263e-02 ± 6.5141e-02} & 4.9324e+00 ± 1.2466e+01 & 7.6135e-01 ± 1.5741e-01 \\
400 & \textbf{9.8622e-02 ± 9.8602e-02} & \underline{1.5892e-01 ± 1.0879e-01} & 9.3367e+00 ± 1.7393e+01 & 8.6537e-01 ± 1.1184e-01 \\
500 & \textbf{1.0531e-01 ± 7.0550e-02} & \underline{2.5009e-01 ± 1.4615e-01} & 3.7433e+01 ± 5.1763e+01 & 9.1735e-01 ± 8.4738e-02 \\
600 & \textbf{9.8902e-02 ± 6.7705e-02} & \underline{3.3596e-01 ± 1.6195e-01} & 6.3051e+01 ± 6.7667e+01 & 9.3905e-01 ± 5.4070e-02 \\
700 & \textbf{8.6077e-02 ± 7.8624e-02} & \underline{4.5346e-01 ± 2.1413e-01} & 1.2005e+02 ± 8.4868e+01 & 9.4772e-01 ± 2.4651e-02 \\
800 & \textbf{1.1433e-01 ± 7.9819e-02} & \underline{5.8063e-01 ± 2.7695e-01} & 1.8029e+02 ± 9.5279e+01 & 9.5326e-01 ± 8.7271e-03 \\
900 & \textbf{8.5777e-02 ± 6.6255e-02} & \underline{7.2418e-01 ± 3.2390e-01} & 2.3878e+02 ± 9.7266e+01 & 9.5539e-01 ± 7.6726e-03 \\
1000& \textbf{7.6348e-02 ± 5.8800e-02} & \underline{8.8634e-01 ± 3.5964e-01} & 3.1605e+02 ± 9.9995e+01 & 9.5688e-01 ± 6.7310e-03 \\
\hline
\end{tabular}
\caption{EM equation: Average relative energy error at selected rollout steps
(mean $\pm$ std over 10 random initial conditions).
Bold indicates the smallest error; underline indicates the second smallest.}
\label{tab:em-energy_errors_time_rows}
\end{table}

\begin{table}[htbp]
\centering
\small
\setlength{\tabcolsep}{6pt}
\begin{tabular}{ccccc}
\hline
Time step & SNO (ours) & FNO & GNO & CNO \\
\hline
1   & \textbf{8.5307e-04 ± 4.1865e-05} & \underline{2.1574e-03 ± 1.2118e-04} & 2.3512e-02 ± 7.5493e-04 & 2.0376e-02 ± 2.4352e-03 \\
10  & \textbf{3.4027e-02 ± 1.1334e-02} & \underline{3.7716e-02 ± 1.0094e-02} & 2.3493e-01 ± 8.5029e-03 & 1.8306e-01 ± 2.3398e-02 \\
20  & \textbf{5.8060e-02 ± 1.7743e-02} & \underline{7.0443e-02 ± 1.6704e-02} & 4.8192e-01 ± 1.7654e-02 & 3.2496e-01 ± 4.2910e-02 \\
30  & \textbf{7.7376e-02 ± 2.2805e-02} & \underline{1.0946e-01 ± 2.8500e-02} & 7.4525e-01 ± 2.9251e-02 & 4.3305e-01 ± 5.9118e-02 \\
40  & \textbf{9.0340e-02 ± 2.5473e-02} & \underline{1.8820e-01 ± 4.7966e-02} & 1.0279e+00 ± 4.3341e-02 & 5.1313e-01 ± 7.0335e-02 \\
50  & \textbf{1.0123e-01 ± 2.6573e-02} & \underline{2.5977e-01 ± 4.4975e-02} & 1.3332e+00 ± 6.0060e-02 & 5.7321e-01 ± 7.8355e-02 \\
60  & \textbf{1.1056e-01 ± 2.8871e-02} & \underline{3.1678e-01 ± 2.6782e-02} & 1.6646e+00 ± 7.9882e-02 & 6.1843e-01 ± 8.2276e-02 \\
70  & \textbf{1.1949e-01 ± 2.9432e-02} & \underline{3.5741e-01 ± 1.9874e-02} & 2.0280e+00 ± 1.0395e-01 & 6.5249e-01 ± 8.1985e-02 \\
80  & \textbf{1.2549e-01 ± 2.8100e-02} & \underline{3.6321e-01 ± 1.5993e-02} & 2.4307e+00 ± 1.3449e-01 & 6.7503e-01 ± 8.2507e-02 \\
90  & \textbf{1.3202e-01 ± 2.5894e-02} & \underline{3.7548e-01 ± 1.1533e-02} & 2.8825e+00 ± 1.7327e-01 & 6.9118e-01 ± 8.6357e-02 \\
100 & \textbf{1.3949e-01 ± 2.4032e-02} & \underline{3.9677e-01 ± 1.3261e-02} & 3.3966e+00 ± 2.2466e-01 & 7.0412e-01 ± 8.8933e-02 \\
200 & \textbf{2.5536e-01 ± 1.8225e-02} & \underline{5.2072e-01 ± 1.3291e-02} & 1.7747e+01 ± 4.0772e-01 & 7.4426e-01 ± 9.2955e-02 \\
300 & \textbf{3.6039e-01 ± 2.1566e-02} & \underline{6.8729e-01 ± 1.9123e-02} & 1.1920e+01 ± 2.4516e+00 & 7.4639e-01 ± 7.8916e-02 \\
400 & \textbf{4.4901e-01 ± 2.3022e-02} & 8.4749e-01 ± 5.2231e-02 & 1.3544e+01 ± 3.2344e+00 & \underline{7.4160e-01 ± 7.7576e-02} \\
500 & \textbf{5.3155e-01 ± 2.2694e-02} & 1.0712e+00 ± 9.0086e-02 & 1.6169e+01 ± 2.9063e+00 & \underline{7.4304e-01 ± 8.0398e-02} \\
600 & \textbf{6.0207e-01 ± 3.0264e-02} & 1.3653e+00 ± 1.4241e-01 & 1.7702e+01 ± 1.0087e+00 & \underline{7.5304e-01 ± 8.2394e-02} \\
700 & \textbf{6.8163e-01 ± 9.0493e-02} & 1.7077e+00 ± 1.7906e-01 & 1.8326e+01 ± 1.4340e+00 & \underline{7.5534e-01 ± 1.0624e-01} \\
800 & \textbf{7.8170e-01 ± 1.7954e-01} & 2.0900e+00 ± 2.0920e-01 & 1.7047e+01 ± 1.6237e+00 & \underline{7.4949e-01 ± 1.0087e-01} \\
900 & \textbf{9.4456e-01 ± 3.4879e-01} & 2.5013e+00 ± 2.4007e-01 & 1.8201e+01 ± 1.4458e+00 & \underline{7.3382e-01 ± 1.0244e-01} \\
1000& \underline{2.0293e+00 ± 3.2627e+00} & 2.9413e+00 ± 2.7231e-01 & 1.7872e+01 ± 1.1777e+00 & \textbf{7.2473e-01 ± 1.0187e-01} \\
\hline
\end{tabular}
\caption{Schr\"odinger equation: Average $L^2$ error of the state $[re;im]$ at selected rollout steps
(mean $\pm$ std over 10 random initial conditions).
Bold indicates the smallest error; underline indicates the second smallest.}
\label{tab:sch-l2_errors_time_rows}
\end{table}

\begin{table}[htbp]
\centering
\small
\setlength{\tabcolsep}{6pt}
\begin{tabular}{ccccc}
\hline
Time step & SNO (ours) & FNO & GNO & CNO \\
\hline
1   & \textbf{2.2896e-04 ± 2.1409e-04} & \underline{3.5034e-04 ± 1.7306e-04} & 1.0287e-02 ± 7.1902e-03 & 2.8503e-02 ± 4.6709e-03 \\
10  & \underline{1.7667e-02 ± 9.0209e-03} & \textbf{1.0391e-02 ± 7.6222e-03} & 8.9712e-02 ± 3.2549e-02 & 2.6761e-01 ± 2.1914e-02 \\
20  & \textbf{4.4489e-02 ± 2.1620e-02} & \underline{5.9262e-02 ± 5.9274e-02} & 2.0089e-01 ± 7.5031e-02 & 4.8165e-01 ± 2.5188e-02 \\
30  & \textbf{5.4286e-02 ± 2.7906e-02} & \underline{1.6843e-01 ± 1.8578e-01} & 3.6150e-01 ± 1.1656e-01 & 6.3685e-01 ± 2.3837e-02 \\
40  & \textbf{6.4247e-02 ± 2.9048e-02} & 7.7911e-01 ± 5.9917e-01 & \underline{5.0758e-01 ± 1.3124e-01} & 7.3193e-01 ± 1.9284e-02 \\
50  & \textbf{7.0698e-02 ± 2.2359e-02} & 1.5130e+00 ± 8.3417e-01 & \underline{7.4457e-01 ± 2.0216e-01} & 8.0375e-01 ± 2.0818e-02 \\
60  & \textbf{5.8614e-02 ± 3.2303e-02} & 2.0131e+00 ± 4.3464e-01 & 1.0311e+00 ± 2.3215e-01 & \underline{8.5674e-01 ± 1.4883e-02} \\
70  & \textbf{5.6767e-02 ± 3.0466e-02} & 3.4641e+00 ± 9.3056e-01 & 1.3442e+00 ± 2.3951e-01 & \underline{8.9093e-01 ± 9.9771e-03} \\
80  & \textbf{4.4155e-02 ± 2.3716e-02} & 2.0692e+00 ± 5.0638e-01 & 1.5952e+00 ± 2.9500e-01 & \underline{9.0625e-01 ± 1.4425e-02} \\
90  & \textbf{6.2690e-02 ± 2.9814e-02} & 3.3299e+00 ± 8.0265e-01 & 2.0815e+00 ± 4.5458e-01 & \underline{9.2260e-01 ± 1.0374e-02} \\
100 & \textbf{5.2798e-02 ± 2.8234e-02} & 2.3427e+00 ± 4.8016e-01 & 2.6017e+00 ± 4.7998e-01 & \underline{9.3565e-01 ± 6.4499e-03} \\
200 & \textbf{7.5000e-02 ± 3.2158e-02} & 3.2773e+00 ± 7.5053e-01 & 6.3301e+01 ± 2.1092e+01 & \underline{9.6816e-01 ± 5.3943e-03} \\
300 & \textbf{7.5988e-02 ± 4.3304e-02} & 4.3426e+00 ± 1.2464e+00 & 2.3931e+02 ± 1.4370e+02 & \underline{9.6994e-01 ± 5.8477e-03} \\
400 & \textbf{9.3727e-02 ± 4.6479e-02} & 4.5848e+00 ± 1.0946e+00 & 2.4662e+02 ± 1.1122e+02 & \underline{9.6948e-01 ± 5.9812e-03} \\
500 & \textbf{1.3088e-01 ± 6.5064e-02} & 3.6530e+00 ± 9.3833e-01 & 2.4690e+02 ± 9.7077e+01 & \underline{9.6968e-01 ± 5.9974e-03} \\
600 & \textbf{1.2366e-01 ± 5.5656e-02} & 4.8098e+00 ± 1.2750e+00 & 2.4635e+02 ± 9.4433e+01 & \underline{9.6961e-01 ± 5.8986e-03} \\
700 & \textbf{1.7737e-01 ± 1.8253e-01} & 5.7553e+00 ± 1.2348e+00 & 3.3578e+02 ± 1.6508e+02 & \underline{9.6980e-01 ± 5.6918e-03} \\
800 & \textbf{2.0660e-01 ± 2.3856e-01} & 4.6042e+00 ± 1.2803e+00 & 2.8190e+02 ± 7.5350e+01 & \underline{9.6969e-01 ± 5.7903e-03} \\
900 & \underline{1.7104e+00 ± 4.8311e+00} & 5.1049e+00 ± 9.8926e-01 & 2.9552e+02 ± 1.5569e+02 & \textbf{9.6981e-01 ± 5.6817e-03} \\
1000& 1.7042e+02 ± 5.3805e+02 & \underline{6.3522e+00 ± 1.2201e+00} & 3.3652e+02 ± 1.8009e+02 & \textbf{9.6957e-01 ± 5.9932e-03} \\
\hline
\end{tabular}
\caption{Schr\"odinger equation: Average relative energy error at selected rollout steps
(mean $\pm$ std over 10 random initial conditions).
Bold indicates the smallest error; underline indicates the second smallest.}
\label{tab:sch-energy_errors_time_rows}
\end{table}

\begin{table}[htbp]
\centering
\small
\setlength{\tabcolsep}{6pt}
\begin{tabular}{ccccc}
\hline
Time step & SNO (ours) & FNO & GNO & CNO \\
\hline
1   & \textbf{5.1607e-04 ± 1.2732e-05} & \underline{9.0181e-04 ± 6.1741e-05} & 1.0153e-03 ± 1.0656e-04 & 1.4576e-02 ± 2.0963e-03 \\
10  & \textbf{5.1710e-03 ± 1.3257e-04} & \underline{9.0388e-03 ± 6.3539e-04} & 1.0221e-02 ± 1.0636e-03 & 1.4649e-01 ± 1.9070e-02 \\
20  & \textbf{1.0383e-02 ± 2.7185e-04} & \underline{1.8190e-02 ± 1.3207e-03} & 2.0756e-02 ± 2.1446e-03 & 3.0649e-01 ± 3.2829e-02 \\
30  & \textbf{1.5633e-02 ± 4.2165e-04} & \underline{2.7504e-02 ± 2.0590e-03} & 3.1843e-02 ± 3.2972e-03 & 4.8654e-01 ± 4.4339e-02 \\
40  & \textbf{2.0928e-02 ± 5.7906e-04} & \underline{3.7010e-02 ± 2.8402e-03} & 4.3687e-02 ± 4.6007e-03 & 6.8317e-01 ± 5.9875e-02 \\
50  & \textbf{2.6274e-02 ± 7.4180e-04} & \underline{4.6739e-02 ± 3.6385e-03} & 5.6438e-02 ± 6.1535e-03 & 8.8811e-01 ± 8.1802e-02 \\
60  & \textbf{3.1687e-02 ± 9.0872e-04} & \underline{5.6733e-02 ± 4.4417e-03} & 7.0185e-02 ± 8.0584e-03 & 1.0940e+00 ± 1.0806e-01 \\
70  & \textbf{3.7177e-02 ± 1.0814e-03} & \underline{6.7065e-02 ± 5.2563e-03} & 8.4952e-02 ± 1.0402e-02 & 1.2961e+00 ± 1.3272e-01 \\
80  & \textbf{4.2754e-02 ± 1.2621e-03} & \underline{7.7821e-02 ± 6.1272e-03} & 1.0070e-01 ± 1.3238e-02 & 1.4921e+00 ± 1.5080e-01 \\
90  & \textbf{4.8434e-02 ± 1.4513e-03} & \underline{8.9071e-02 ± 7.1606e-03} & 1.1735e-01 ± 1.6575e-02 & 1.6822e+00 ± 1.6160e-01 \\
100 & \textbf{5.4225e-02 ± 1.6513e-03} & \underline{1.0085e-01 ± 8.4698e-03} & 1.3477e-01 ± 2.0383e-02 & 1.8665e+00 ± 1.6816e-01 \\
200 & \textbf{1.1745e-01 ± 5.2180e-03} & \underline{2.4736e-01 ± 4.6000e-02} & 3.1563e-01 ± 6.3976e-02 & 3.1312e+00 ± 2.9415e-01 \\
300 & \textbf{1.8819e-01 ± 9.4619e-03} & \underline{4.0734e-01 ± 9.5143e-02} & 4.6690e-01 ± 7.7026e-02 & 3.6836e+00 ± 4.3960e-01 \\
400 & \textbf{2.7563e-01 ± 1.9537e-02} & 7.4179e-01 ± 7.0877e-01 & \underline{6.3418e-01 ± 8.4716e-02} & 4.0809e+00 ± 4.3408e-01 \\
500 & \textbf{3.6969e-01 ± 2.7930e-02} & 1.3786e+00 ± 2.3600e+00 & \underline{8.2240e-01 ± 7.7240e-02} & 4.5524e+00 ± 4.6611e-01 \\
600 & \textbf{4.7114e-01 ± 3.5487e-02} & 1.7371e+00 ± 3.0915e+00 & \underline{1.0134e+00 ± 9.3464e-02} & 5.1024e+00 ± 6.9532e-01 \\
700 & \textbf{5.7598e-01 ± 4.1200e-02} & 2.4182e+00 ± 4.9366e+00 & \underline{2.0644e+00 ± 2.6931e+00} & 5.6771e+00 ± 9.9939e-01 \\
800 & \textbf{7.0047e-01 ± 5.7071e-02} & \underline{3.0065e+00 ± 6.3882e+00} & 4.4241e+00 ± 4.1572e+00 & 6.2705e+00 ± 1.2652e+00 \\
900 & \textbf{8.3745e-01 ± 7.4744e-02} & \underline{3.3427e+00 ± 7.1788e+00} & 7.4576e+00 ± 4.2263e+00 & 6.8423e+00 ± 1.4684e+00 \\
1000& \textbf{9.6130e-01 ± 7.0418e-02} & \underline{3.8190e+00 ± 8.3909e+00} & 1.0321e+01 ± 3.3743e+00 & 7.3353e+00 ± 1.5668e+00 \\
\hline
\end{tabular}
\caption{Klein--Gordon equation: Average $L^2$ error of the state $[f;g]$ at selected rollout steps
(mean $\pm$ std over 10 random initial conditions).
Bold indicates the smallest error; underline indicates the second smallest.}
\label{tab:kg-l2_errors_time_rows}
\end{table}

\begin{table}[htbp]
\centering
\small
\setlength{\tabcolsep}{6pt}
\begin{tabular}{ccccc}
\hline
Time step & SNO (ours) & FNO & GNO & CNO \\
\hline
1   & \textbf{2.8794e-05 ± 2.5831e-05} & 1.6809e-04 ± 1.3114e-04 & \underline{1.5536e-04 ± 1.1380e-04} & 5.7628e-03 ± 3.9218e-03 \\
10  & \textbf{3.0437e-04 ± 2.5164e-04} & 1.7006e-03 ± 1.3030e-03 & \underline{1.4132e-03 ± 1.1082e-03} & 7.1123e-02 ± 4.7740e-02 \\
20  & \textbf{6.4672e-04 ± 5.0263e-04} & 3.3962e-03 ± 2.5427e-03 & \underline{2.5020e-03 ± 2.1754e-03} & 1.7335e-01 ± 1.2483e-01 \\
30  & \textbf{1.0353e-03 ± 7.6369e-04} & 5.1070e-03 ± 3.6807e-03 & \underline{3.4219e-03 ± 3.0486e-03} & 3.1005e-01 ± 2.3616e-01 \\
40  & \textbf{1.4957e-03 ± 1.0085e-03} & 6.8440e-03 ± 4.7033e-03 & \underline{4.4229e-03 ± 3.5113e-03} & 4.8235e-01 ± 3.6694e-01 \\
50  & \textbf{2.0341e-03 ± 1.2330e-03} & 8.5923e-03 ± 5.6330e-03 & \underline{5.5079e-03 ± 3.6509e-03} & 6.8451e-01 ± 4.8825e-01 \\
60  & \textbf{2.6323e-03 ± 1.4682e-03} & 1.0330e-02 ± 6.5026e-03 & \underline{6.5765e-03 ± 3.8032e-03} & 8.9162e-01 ± 6.0118e-01 \\
70  & \textbf{3.2861e-03 ± 1.7253e-03} & 1.2041e-02 ± 7.3328e-03 & \underline{7.6205e-03 ± 4.2832e-03} & 1.1010e+00 ± 6.9185e-01 \\
80  & \textbf{3.9902e-03 ± 2.0105e-03} & 1.3704e-02 ± 8.1259e-03 & \underline{8.6280e-03 ± 5.3431e-03} & 1.3145e+00 ± 7.5504e-01 \\
90  & \textbf{4.7392e-03 ± 2.3264e-03} & 1.5299e-02 ± 8.8821e-03 & \underline{9.6698e-03 ± 6.8988e-03} & 1.5315e+00 ± 8.0374e-01 \\
100 & \textbf{5.5267e-03 ± 2.6738e-03} & 1.6823e-02 ± 9.6004e-03 & \underline{1.1395e-02 ± 7.9654e-03} & 1.7449e+00 ± 8.4807e-01 \\
200 & \textbf{1.4242e-02 ± 6.9533e-03} & \underline{2.9454e-02 ± 1.7624e-02} & 3.7614e-02 ± 2.6844e-02 & 2.8202e+00 ± 1.0713e+00 \\
300 & \textbf{2.1929e-02 ± 8.3854e-03} & \underline{4.1483e-02 ± 3.2338e-02} & 7.2169e-02 ± 3.4758e-02 & 2.8065e+00 ± 1.0585e+00 \\
400 & \textbf{2.5658e-02 ± 9.5577e-03} & 1.4474e-01 ± 3.0159e-01 & \underline{9.6307e-02 ± 3.3169e-02} & 2.6558e+00 ± 9.4671e-01 \\
500 & \textbf{2.4713e-02 ± 1.2180e-02} & 7.3091e-01 ± 2.1192e+00 & \underline{9.5030e-02 ± 3.6458e-02} & 2.4979e+00 ± 9.3078e-01 \\
600 & \textbf{2.0020e-02 ± 1.4046e-02} & 1.2669e+00 ± 3.7711e+00 & \underline{6.8152e-02 ± 4.2556e-02} & 2.3098e+00 ± 1.0045e+00 \\
700 & \textbf{1.9395e-02 ± 1.4944e-02} & 2.6338e+00 ± 8.0354e+00 & \underline{4.9562e-01 ± 1.3850e+00} & 2.1644e+00 ± 1.0892e+00 \\
800 & \textbf{2.1471e-02 ± 1.6639e-02} & 4.1927e+00 ± 1.2876e+01 & \underline{1.9710e+00 ± 2.5730e+00} & 2.0354e+00 ± 1.1428e+00 \\
900 & \textbf{2.5336e-02 ± 1.8135e-02} & 5.3221e+00 ± 1.6352e+01 & 4.8306e+00 ± 3.9745e+00 & \underline{1.8875e+00 ± 1.0700e+00} \\
1000& \textbf{3.0421e-02 ± 1.9090e-02} & 6.7121e+00 ± 2.0630e+01 & 8.8947e+00 ± 4.9985e+00 & \underline{1.7986e+00 ± 9.9847e-01} \\
\hline
\end{tabular}
\caption{Klein--Gordon equation: Average relative energy error at selected rollout steps
(mean $\pm$ std over 10 random initial conditions).
Bold indicates the smallest error; underline indicates the second smallest.}
\label{tab:kg-energy_errors_time_rows}
\end{table}

\begin{figure}[htbp]
  \centering
  \includegraphics[width=0.6\linewidth]{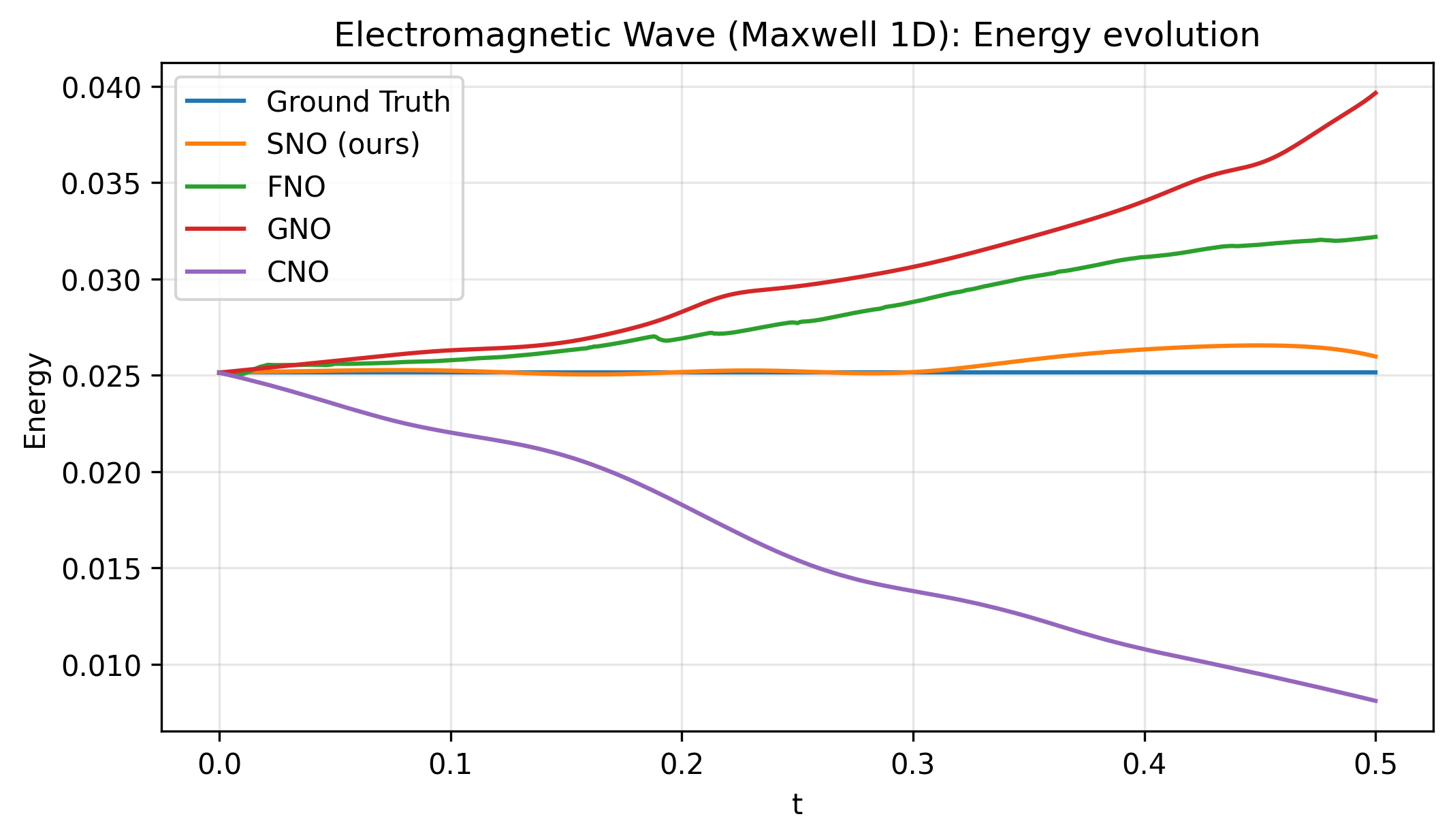}
  \caption{Maxwell 1D equation: Energy evolution.}
  \label{fig:em-energy}
\end{figure}

\begin{figure}[htbp]
  \centering
  \includegraphics[width=\linewidth]{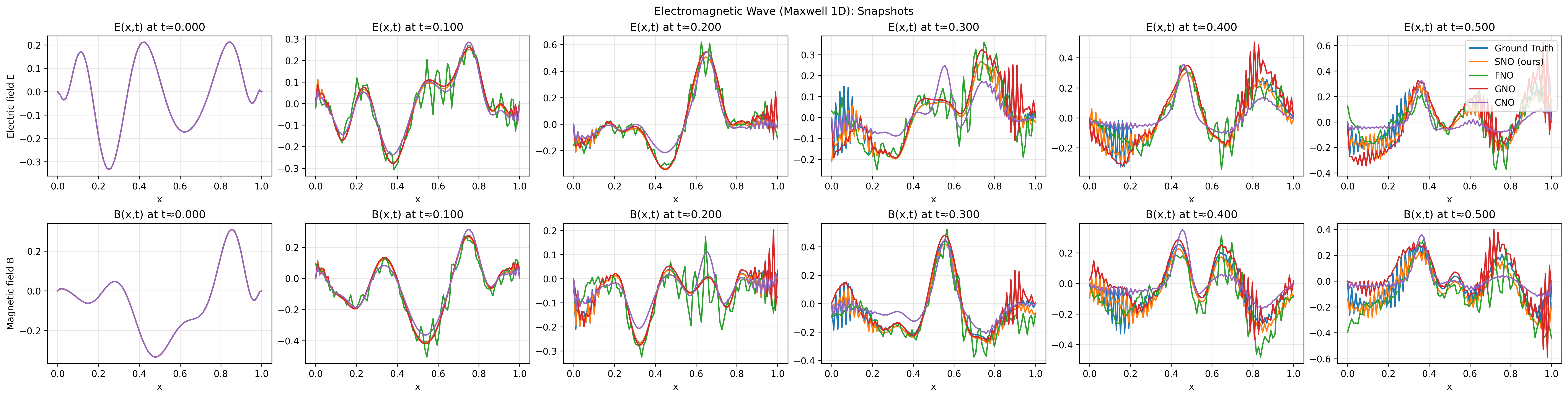}
  \caption{Maxwell 1D equation: snapshots of electromagnetic field at first 500 time-steps}
  \label{fig:em-snapshots}
\end{figure}

\begin{figure}[htbp]
  \centering
  \includegraphics[width=\linewidth]{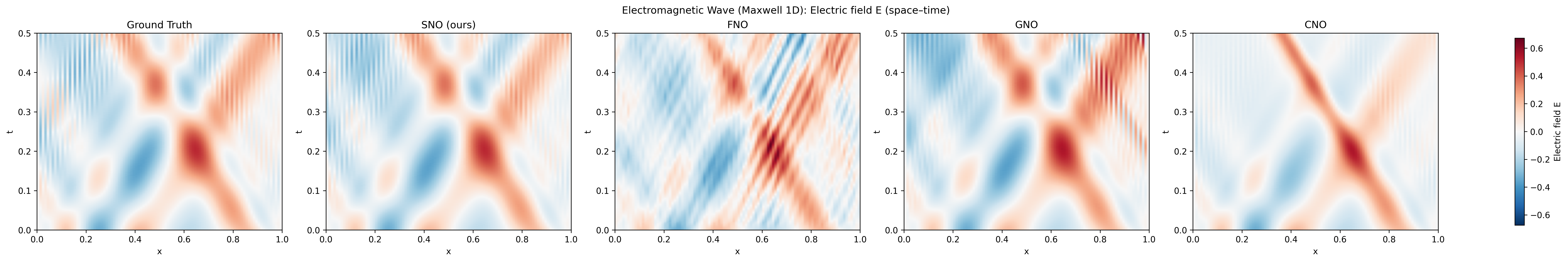}
  \caption{Maxwell 1D equation: space--time Electric field}
  \label{fig:em-spacetime-E}
\end{figure}

\begin{figure}[htbp]
  \centering
  \includegraphics[width=\linewidth]{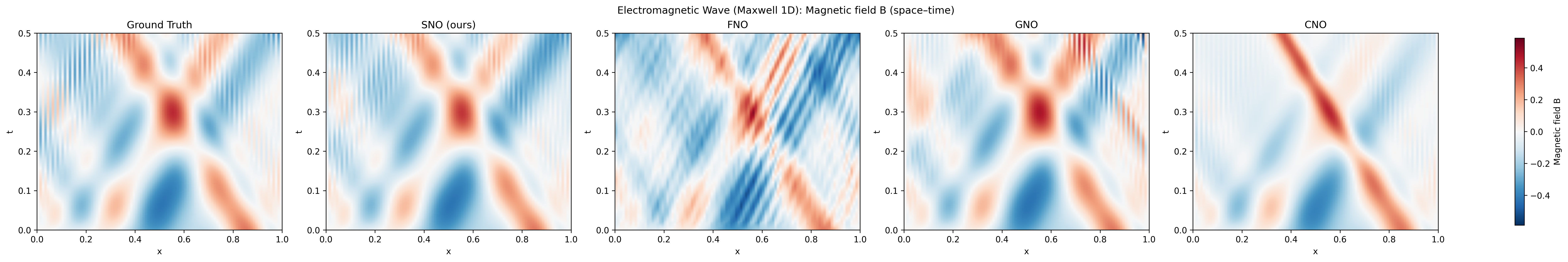}
  \caption{Maxwell 1D equation: space--time Magnetic field}
  \label{fig:em-spacetime-B}
\end{figure}

\begin{figure}[htbp]
  \centering
  \includegraphics[width=\linewidth]{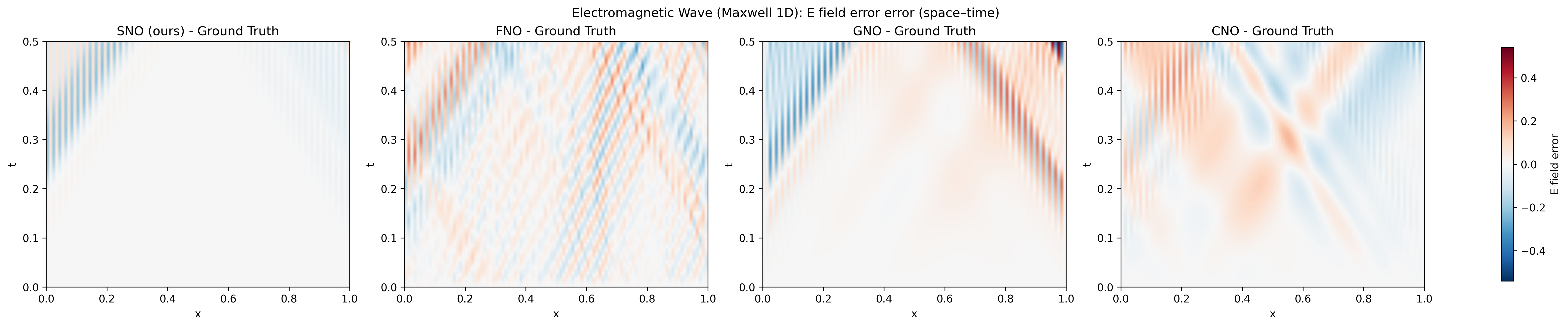}
  \caption{Maxwell 1D equation: space--time electric field error}
  \label{fig:em-spacetime-error-E}
\end{figure}
\begin{figure}[htbp]
  \centering
  \includegraphics[width=\linewidth]{sno/em/all/spacetime_error_E.png}
  \caption{Maxwell 1D equation: space--time Magnetic field error
  }
  \label{fig:em-spacetime-error-B}
\end{figure}

\begin{figure}[htbp]
  \centering
  \includegraphics[width=0.6\linewidth]{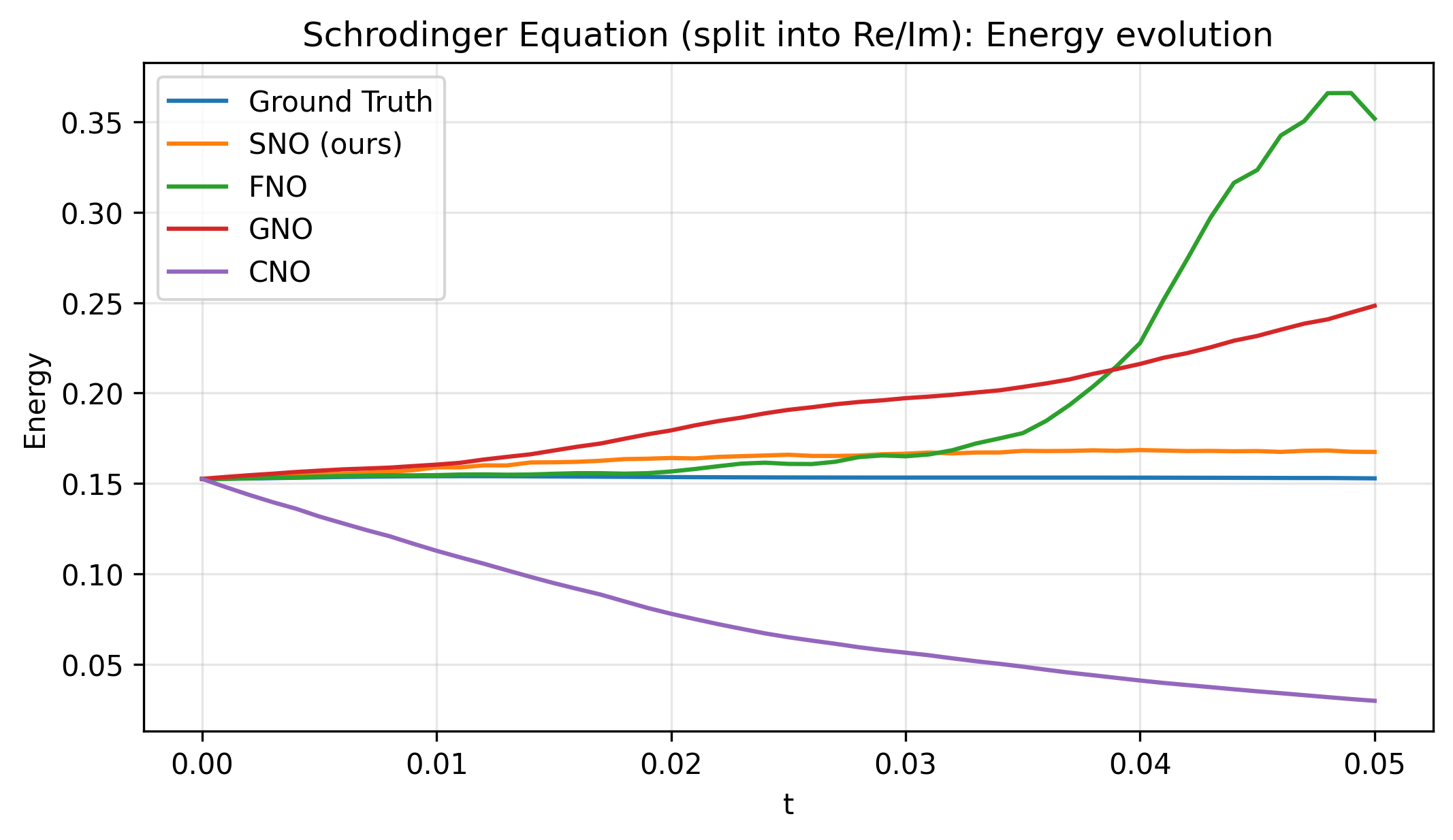}
  \caption{Schr\"odinger equation: Energy evolution at first 50 time-steps.}
  \label{fig:sch-energy}
\end{figure}

\begin{figure}[htbp]
  \centering
  \includegraphics[width=\linewidth]{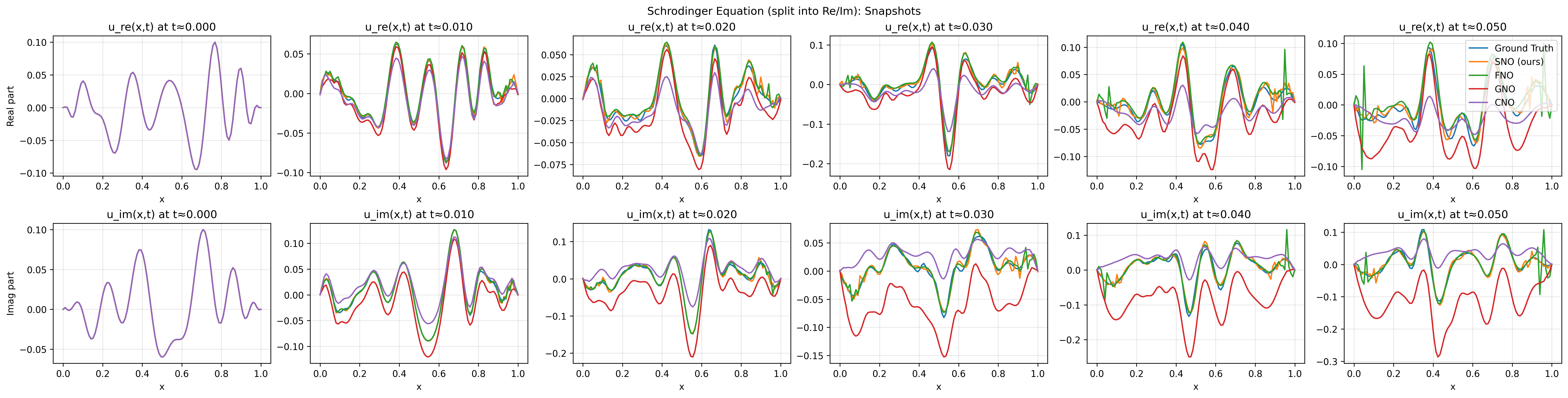}
  \caption{Schr\"odinger equation: snapshots of real part and imaginary aprt  at first 50 time-steps}
  \label{fig:sch-snapshots}
\end{figure}

\begin{figure}[htbp]
  \centering
  \includegraphics[width=\linewidth]{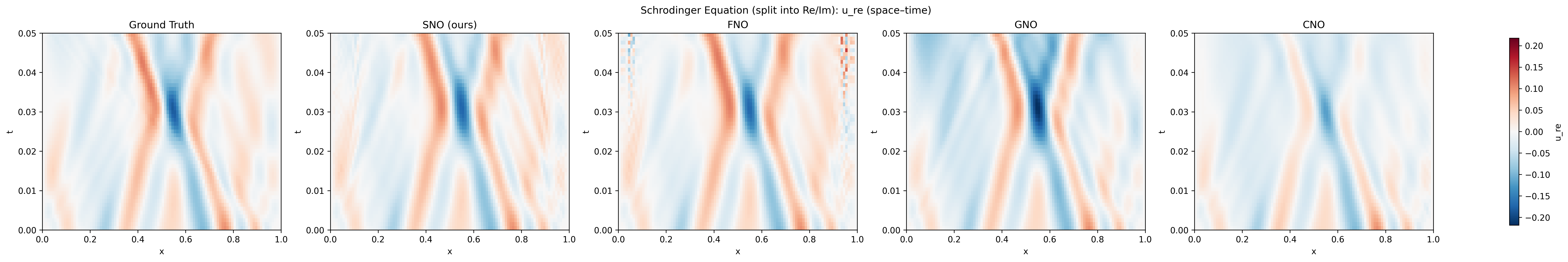}
  \caption{Schr\"odinger equation: space--time real part}
  \label{fig:sch-spacetime-re}
\end{figure}

\begin{figure}[htbp]
  \centering
  \includegraphics[width=\linewidth]{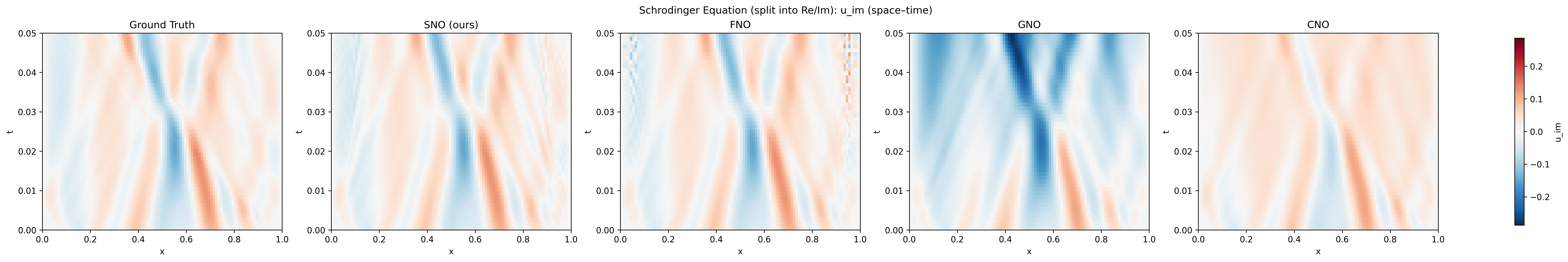}
  \caption{Schr\"odinger equation: space--time imaginary part}
  \label{fig:sch-spacetime-im}
\end{figure}

\begin{figure}[htbp]
  \centering
  \includegraphics[width=\linewidth]{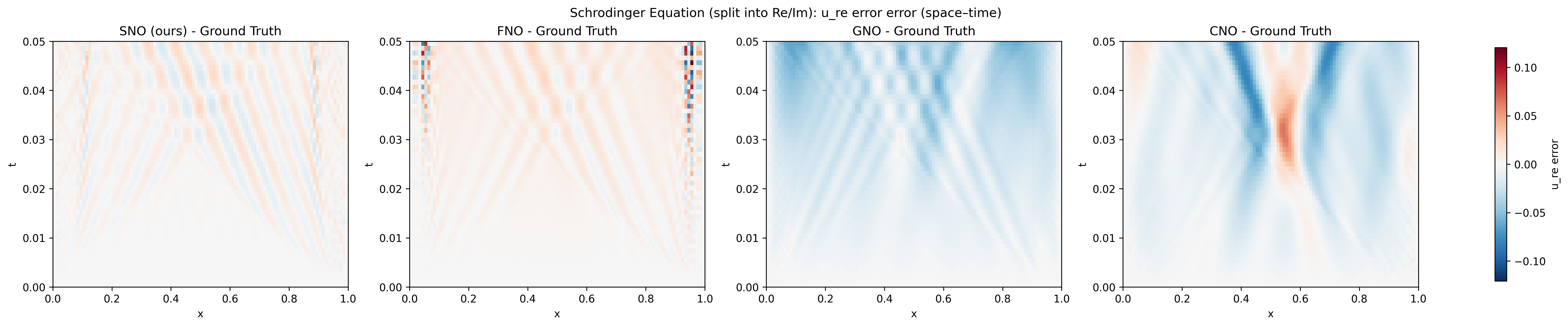}
  \caption{Schr\"odinger equation: space--time real part error}
  \label{fig:sch-spacetime-error-re}
\end{figure}
\begin{figure}[htbp]
  \centering
  \includegraphics[width=\linewidth]{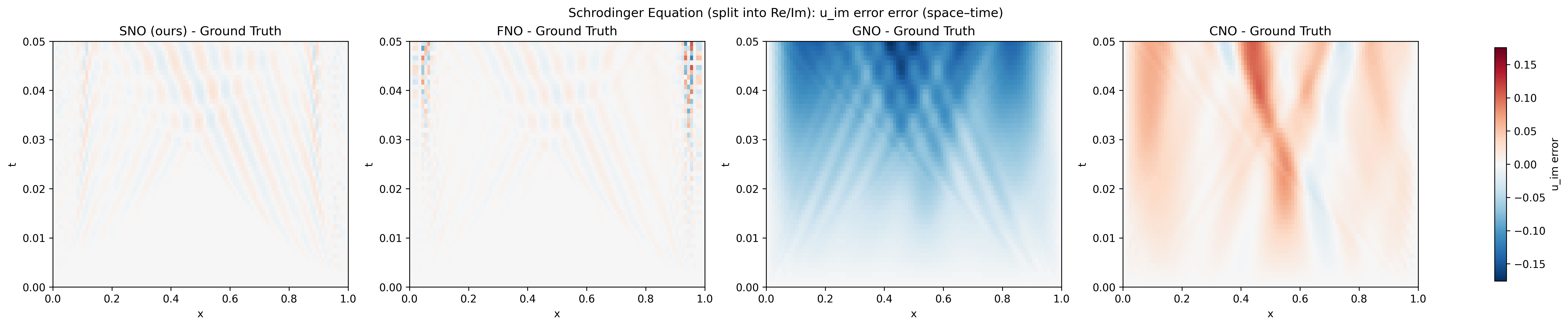}
  \caption{Schr\"odinger equation: space--time imaginary error
  }
  \label{fig:sch-spacetime-error-im}
\end{figure}

\begin{figure}[htbp]
  \centering
  \includegraphics[width=0.6\linewidth]{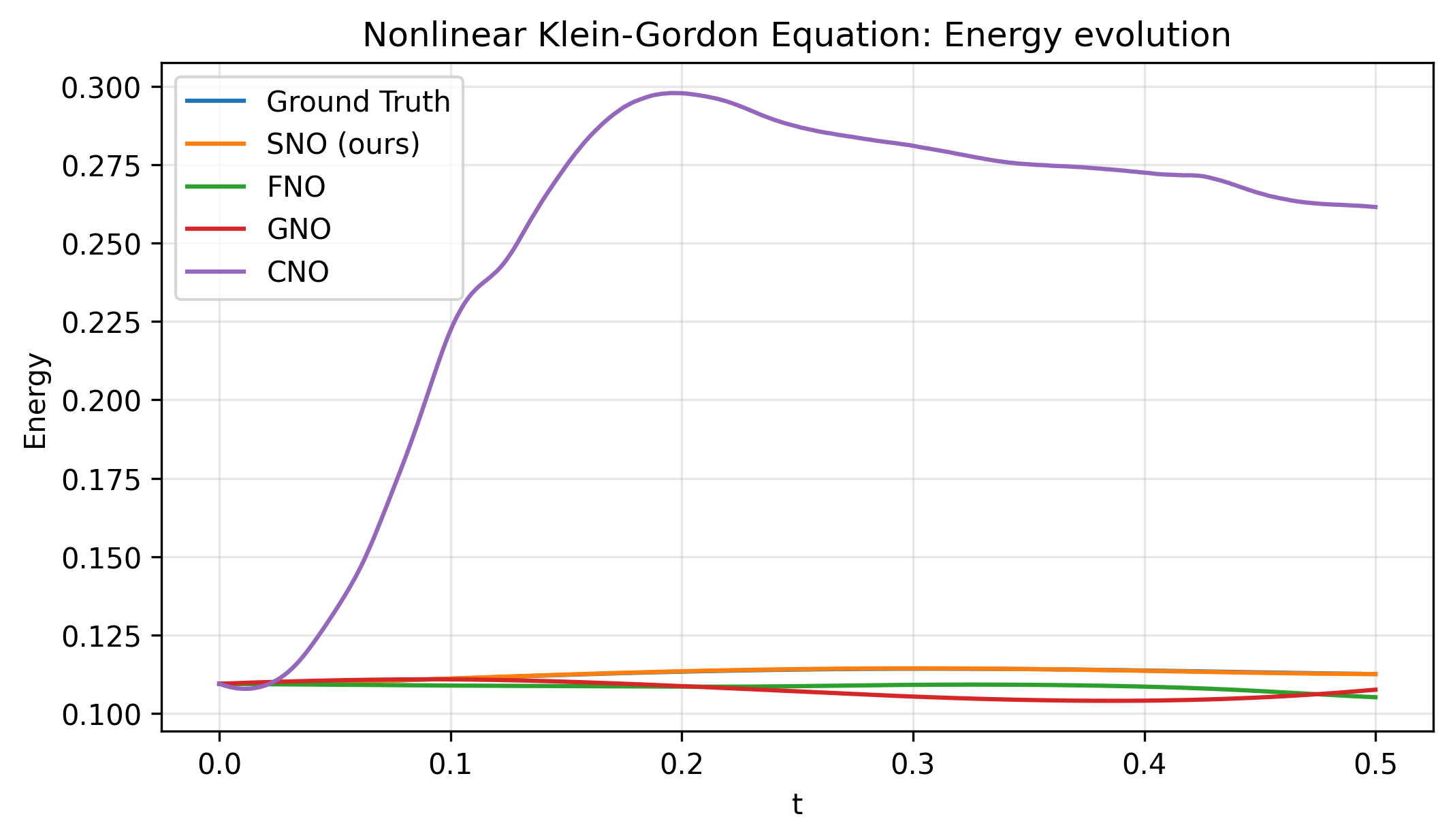}
  \caption{Nonlinear Klein--Gordon equation: Hamiltonian energy evolution.
  The baseline FNO exhibits rapid energy growth and eventual blow-up, whereas
  the Symplectic Neural Operator (SNO) maintains bounded energy evolution close
  to the finite-difference (FD) reference.}
  \label{fig:nkg-energy}
\end{figure}

\begin{figure}[htbp]
  \centering
  \includegraphics[width=\linewidth]{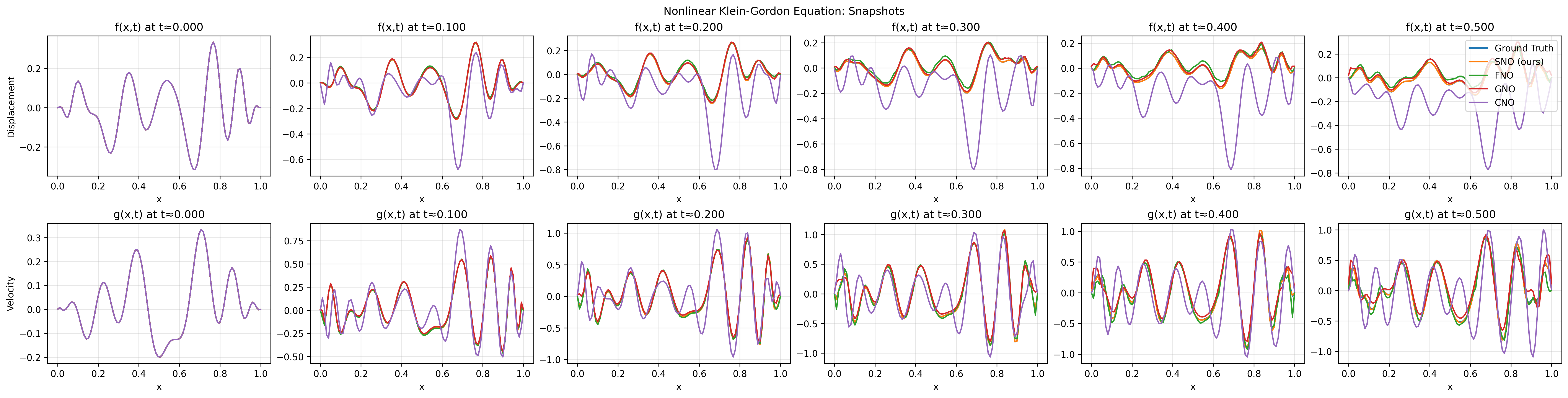}
  \caption{Nonlinear Klein--Gordon equation: snapshots of displacement
  $f(x,t)$ (top row) and velocity $g(x,t)$ (bottom row) at representative
  times.}
  \label{fig:nkg-snapshots}
\end{figure}

\begin{figure}[htbp]
  \centering
  \includegraphics[width=\linewidth]{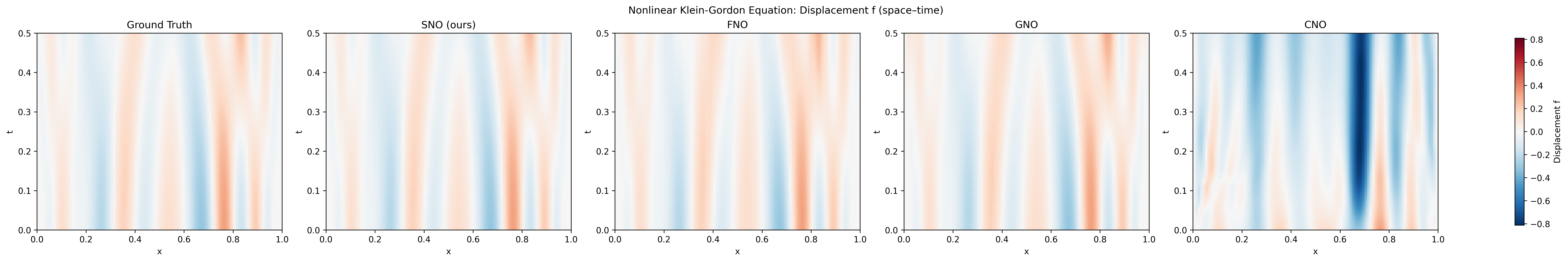}
  \caption{Nonlinear Klein--Gordon equation: space--time displacement field.}
  \label{fig:nkg-spacetime-f}
\end{figure}

\begin{figure}[htbp]
  \centering
  \includegraphics[width=\linewidth]{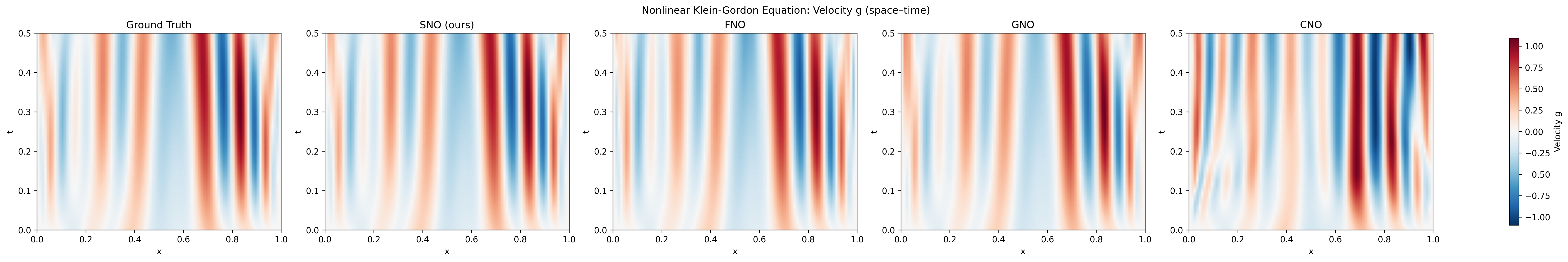}
  \caption{Nonlinear Klein--Gordon equation: space--time velocity field.}
  \label{fig:nkg-spacetime-g}
\end{figure}

\begin{figure}[htbp]
  \centering
  \includegraphics[width=0.95\linewidth]{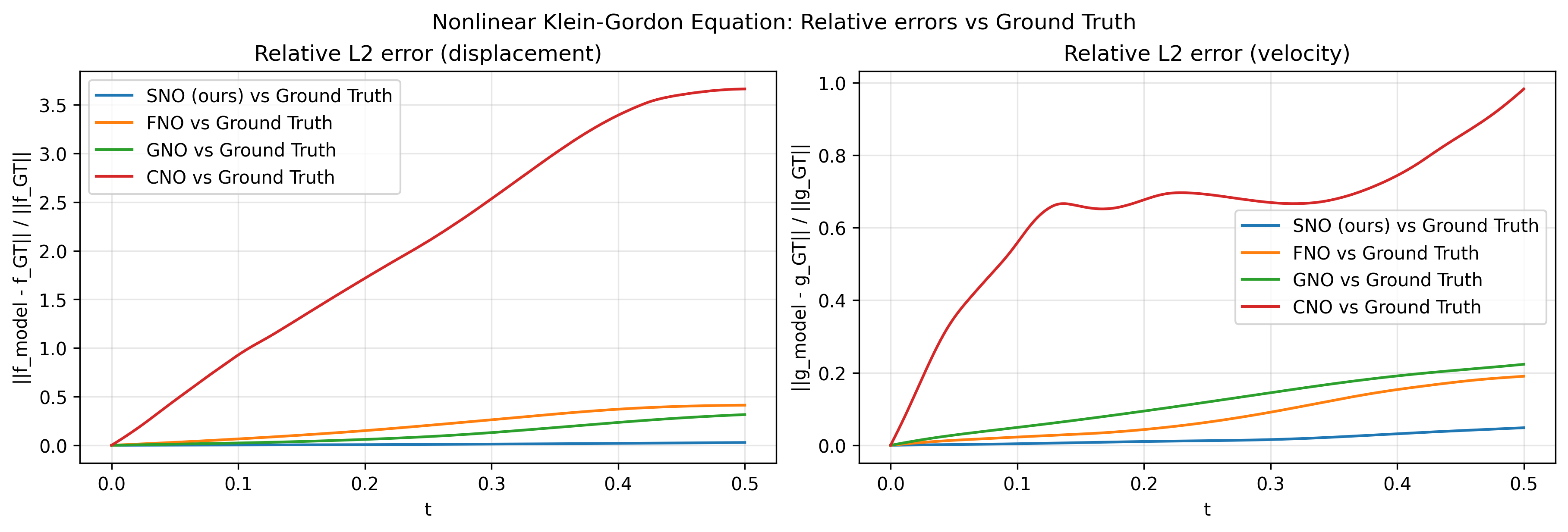}
  \caption{Klein-Gordon equation: relative $L^2$ rollout errors against the ground truth for displacement (left) and velocity (right).
  The self-adjoint SNO significantly reduces long-time error growth compared
  to the baseline models.}
  \label{fig:nkg-sno-relerr}
\end{figure}

\begin{figure}[htbp]
  \centering
  \includegraphics[width=\linewidth]{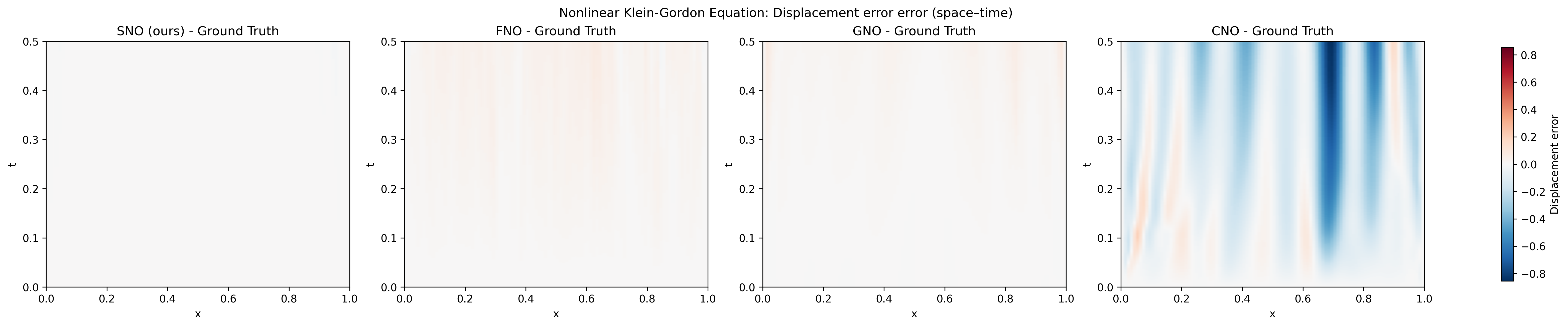}
  \caption{Nonlinear Klein--Gordon equation: space--time displacement error
  relative to ground truth.}
  \label{fig:nkg-spacetime-error-f}
\end{figure}

\begin{figure}[htbp]
  \centering
  \includegraphics[width=\linewidth]{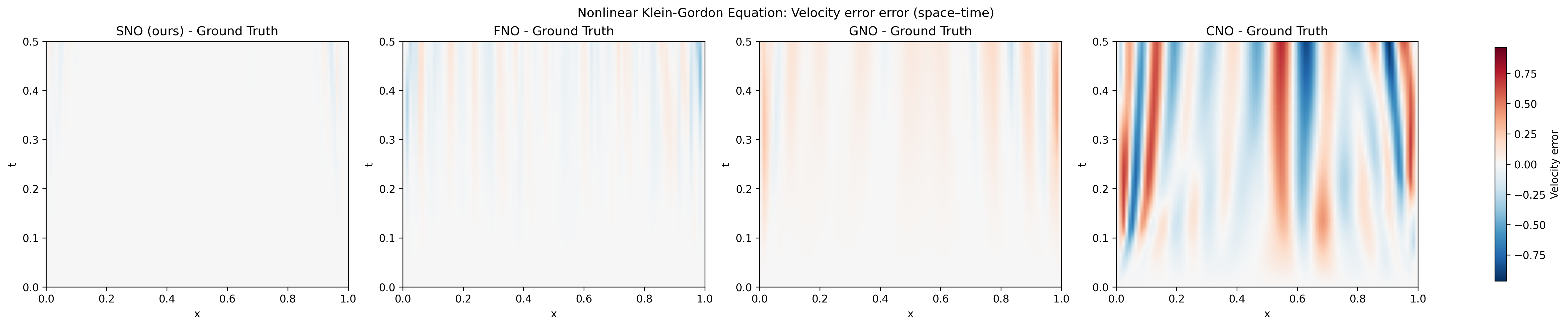}
  \caption{Nonlinear Klein--Gordon equation: space--time velocity error
  relative to ground truth. SNO exhibits significantly improved long-time stability
  compared to all baseline models.}
  \label{fig:nkg-spacetime-error-g}
\end{figure}



\end{document}